\documentclass[reqno]{amsart}
\usepackage{amsfonts,amssymb,amsthm}
\usepackage{xcolor}
\usepackage{amsmath}
\usepackage{amsthm}
\usepackage{amsfonts}
\usepackage{latexsym}
\usepackage{verbatim}
\usepackage{amssymb}
\usepackage{upref}
\usepackage{mathrsfs}
\usepackage{enumerate}
\usepackage{amsmath}
\usepackage{tikz}\usetikzlibrary{patterns}
\usepackage{hyperref}
\title{Wave equations with  mass and dissipation}
\author{Wanderley Nunes do Nascimento \and Jens Wirth}
\address{Jens Wirth\\Institute of Analysis, Dynamics and Modelling\\Department of Mathematics\\University of Stuttgart\\70569 Stuttgart\\Germany}
\email{jens.wirth@mathematik.uni-stuttgart.de}

\address{Wanderley Nunes do Nascimento\\
Universidade Federal de S\~ao Carlos\\S\~ao Carlos\\ Brasil}
\email{wnunesmg@yahoo.com.br}

\newtheorem{thm}{Theorem}[section]
\newtheorem{lem}[thm]{Lemma}
\newtheorem{prop}[thm]{Proposition}

\theoremstyle{definition}
\newtheorem{hypo}{Hypothesis}
\theoremstyle{remark}
\newtheorem{rem}{Remark}[section]
\newtheorem{defn}{Definition}
\newtheorem{exam}[rem]{Example}
\numberwithin{equation}{section}

\def\R{\mathbb R}
\def\C{\mathbb C}

\def\d{\mathrm d}
\DeclareMathOperator{\diag}{diag}
\let\Re\relax
\DeclareMathOperator{\Re}{Re}
\DeclareMathOperator{\supp}{supp}
\def\D{\mathrm D}
\def\i{\mathrm i}
\def\e{\mathrm e}
\DeclareMathOperator*{\slim}{s-lim}
\begin{document}
\begin{abstract}
In this paper we consider a wave model 
with non-effective mass and dissipation terms and provide sharp descriptions of its representation of solutions.
 In particular we conclude estimates for a corresponding energy and estimates of dispersive type.
\end{abstract}
\thanks{The first author is supported by CNPq - Brazil with grant 246117/2012-5.}
\subjclass[2010]{Primary 35L05 Secondary 35L15}
\maketitle
\section{Introduction}

The study of wave models with lower order terms and their influence on energy and dispersive estimates for them 
has led to some interesting observations and also challenging problems. In this paper we will consider the Cauchy
problem
\begin{equation}
 u_{tt} - \Delta u + b(t)u_t + m(t) u
=0 ,\,\,\,u(0,x)=u_0(x),\,\,\,u_t(0,x)=u_1(x),
\end{equation}
for a damped Klein--Gordon equation with variable mass and dissipation and investigate the precise interplay between both coefficients and asymptotic properties of solutions as $t$ tends to infinity.   
For an overview on some results in this direction see \cite{Wir2010} and the monograph \cite{RW14}, we recall just some selected results to motivate the conditions imposed on the coefficient functions as well as the problems 
discussed later on.

One of the starting points for our considerations was the study of Reissig and Smith, \cite{RS05}, treating wave equations with bounded time-dependent speed of propagation and deriving $L^p$--$L^q$ decay estimates for their solutions. A second one is the treatment of wave equations with time-dependent dissipation of the second author
\cite{Wirth1} and \cite{Wirth2} introducing the classification of dissipation terms according to their strength and influence on the large-time behaviour of solutions. There the notion of {\em effective} and {\em non-effective} dissipation was used to distinguish between lower order terms giving asymptotically dominating or sub-ordinate contributions to the large-time behaviour. Let us assume for a moment that no mass term is present, $m=0$, and that $b$ is bounded, non-negative, sufficiently smooth and satisfies a condition of the form
\begin{equation}
    | \partial_t^k b(t) | \le C_k b(t) \left(\frac1{1+t}\right)^k,\qquad k=1,2.
\end{equation}
Then there are essentially two cases. Either
\begin{equation}\label{eq:1}
   \limsup_{t\to\infty} tb(t) < 1.
\end{equation}
Then solutions behave in an asymptotic sense like free waves multiplied by a decay factor, 
\begin{equation}
  \begin{pmatrix} \nabla u(t,x)  \\ u_t(t,x) \end{pmatrix} \sim \frac1{\lambda(t)}  \begin{pmatrix} \nabla v(t,x)  \\ v_t(t,x) \end{pmatrix},\qquad t\to\infty
\end{equation}
the asymptotic relation understood in an appropriate $L^p$-sense, and with $v$ a solution to the free wave equation $v_{tt}=\Delta v$ and $\lambda$
 given as
 \begin{equation}
    \lambda(t) = \exp\left(\frac12\int_0^t b(\tau)\d\tau\right).
 \end{equation}
 This is the above mentioned non-effective case. On the other hand, if
\begin{equation}
   \lim_{t\to\infty} tb(t) = \infty
\end{equation}
solutions to the damped wave equation are asymptotically related to solutions of a parabolic equation
\begin{equation}
  u(t,x) \sim w(t,x),\qquad t\to\infty,
\end{equation}
where $b(t)w_t = \Delta w$. This is made precise in the so-called diffusion phenomenon for damped waves, see 
\cite{Wirth3} or the work of Nishihara \cite{Nish} for the case of constant dissipation term. 

The main tool to prove such statements are asymptotic, but explicit, representations of solutions in terms of Fourier multipliers. In the present paper we will consider both mass and dissipation terms and concentrate on the non-effective case when solutions are still asymptotically hyperbolic and the derivation of $L^p$--$L^q$ decay estimates
depending on high-frequency asymptotics of solutions. 

A difference to earlier accounts is the more systematic study of the low-frequency parts based on asymptotic integration techniques. This allows to understand conditions like the constant appearing on the right-hand side of \eqref{eq:1}, or better, the construction of asymptotic low-frequency solutions in terms of leading order terms in\begin{equation}
   b(t) = \frac{b_0}{1+t} + o\left(\frac1{1+t}\right),\qquad    m(t) = \frac{m_0}{(1+t)^2} + o\left(\frac{1}{(1+t)^2}\right).
\end{equation}

The paper is organised as follows. In Section~\ref{sec2} we will give precise conditions on coefficients, outline our basic strategy and give asymptotic constructions of representation of solutions in different zones of the phase space.
In Section~\ref{sec3} we derive energy estimates, $L^p$--$L^q$ estimates and discuss their sharpness. Finally, the Appendix~\ref{secA} collects some useful asymptotic integration theorems for differential equations.

\section{Representations of solutions}\label{sec2}

We consider the following Cauchy problems for damped Klein--Gordon equations
\begin{equation}
 \label{KleinGordonDamping} u_{tt} - \Delta u + b(t)u_t + m(t) u
=0 ,\,\,\,u(0,x)=u_0(x),\,\,\,u_t(0,x)=u_1(x),
\end{equation}
where $(t,x) \in \R_+ \times \R^n$, $b=b(t)$ constitutes a dissipation term and $m=m(t)$ describes a mass term 
under the following basic assumptions. 
\begin{hypo} \label{Hypo6.1}
Suppose that $b,m \in C^{\ell} (\R_+)$ are real-valued and 
\begin{align}
\left| \partial_t^k b(t)  \right| \leq C_k \left( \frac{1}{1+t} \right)^{k+1},\qquad\text{and}\qquad 
\left| \partial_t^k m(t)   \right| \leq C_k \left( \frac{1}{1+t} \right)^{k+2}
\end{align}
holds true for all $k=0,1,\ldots,\ell$. The number $\ell$ will be specified later on. Some statements need a higher regularity.
\end{hypo}
\begin{hypo}\label{Hypo6.2}
Suppose that the following limits 
\begin{equation}
\lim_{t \rightarrow \infty} (1+t) b(t) = b_0 \mbox{\, \, and \, \,}  \lim_{t \rightarrow \infty} (1+t)^2 m(t) = m_0
\end{equation}
exist and that
\begin{equation}
  \int_1^\infty |t b(t) - b_0 |^\sigma \frac{ \d t}t  < \infty \mbox{\, \, and  \, \,  } \int_1^\infty {|t^2 m(t) - m_0 |^{\sigma}}\frac{\d t}t < \infty
\end{equation}
holds true with exponent $\sigma$ satisfying
\begin{equation*}
\text{\bf (A1)}\qquad \sigma=1 \qquad\qquad\text{or}\qquad\qquad \text{\bf (A2)} \qquad \sigma\in(1,2].
\end{equation*}
\end{hypo}

Results will depend on relations between the constants $b_0$ and $m_0$. It will not be necessary to restrict considerations to $b_0
\ge0$ and $m_0\ge0$, results will however depend on the constraint $4m_0>b_0(b_0-2)$ or additional conditions imposed on initial data. See, e.g., the statements of 
Theorems~\ref{Nestea6}, \ref{Nestea6a} or \ref{thm:33} for further details. 

We further define the auxiliary function 
\begin{equation}
\lambda(t) = \exp \left(\frac{1}{2} \int_0^t b(\tau) \d\tau \right)
\end{equation}
related to the dissipative term $b(t)u_t$. It will play an important role in the resulting estimates. Under part (A1) of Hypothesis 2 it follows that
\begin{equation}\label{eq:2.6}
   \lambda(t) \asymp (1+t)^{\frac{b_0}2},\qquad t\to\infty,
\end{equation}
where $f(t)\asymp g(t)$ as $t\to\infty$ means that the limit of the quotient $f(t)/g(t)$ exists. 
When assuming (A2) a further sub-polynomial correction term appears.

\subsection{Zones and general strategy}
Applying the partial Fourier transform in \eqref{KleinGordonDamping} with respect to the spatial variables, we obtain
the ordinary differential equation
\begin{equation}
 \label{KleinGordonDamping2} \widehat{u}_{tt} + |\xi|^2 \widehat{u} + b(t)\widehat{u}_t + m(t) \widehat{u}
=0 ,\,\,\,\widehat{u}(0,\xi)=\widehat{u}_0(\xi),\,\,\,\widehat{u}_t(0,\xi)=\widehat{u}_1(\xi)
\end{equation}
parameterised by the frequency variable $\xi$. We are going to construct asymptotically a parameter-dependent solution to this equation. In order to do so, we 
divide the extended phase space $[0,\infty)\times\mathbb{R}^n_\xi$ into three
zones depending on a constant $N>0$,
\begin{align}
{\mathcal Z}_{\rm diss}(N)
    &= \{(t,\xi) \in [0,\infty) \times \R^n  : (1+t)|\xi| \leq N\},\notag\\
{\mathcal Z}_{\rm hyp}^s(N)
    &= \{(t,\xi) \in [0,\infty) \times \R^n : |\xi| \leq N \leq (1+t)|\xi|\},\\
{\mathcal Z}_{\rm hyp}^\ell(N)
    &= \{(t,\xi) \in [0,\infty) \times \R^n : |\xi| \geq N\}.\notag
\end{align}
The constant $N$ will be specified later on.
In the zone ${\mathcal Z}_{\rm hyp}^\ell(N)$ we consider only large
frequencies and in the zones ${\mathcal Z}_{\rm diss}(N)$ and ${\mathcal Z}_{\rm hyp}^s(N)$ we consider
small frequencies.
Furthermore, the boundary between zones ${\mathcal Z}_{\rm diss}(N)$ and ${\mathcal Z}_{\rm hyp}^s(N)$ is given by the implicitly defined function
\begin{equation} 
\theta_{|\xi|} : (0,N]\to [0,\infty), \,\,\,\, (1+\theta_{|\xi|})|\xi|=N.
\end{equation}
We also put  $\theta_0=\infty$, and $\theta_{|\xi|}=0$ for any~$|\xi|\geq N$.
In order to localise the consideration to the three parts of the extended phase space, we further introduce a function $\chi \in C^{\infty} (\R_+)$
such that $\chi(t) = 1$ for $t \leq 1,$ $\chi(t) = 0$ for $t \geq 2$ and $\chi'(t) \leq 0$, and define the cut-off  functions $\varphi_{\rm diss}$, $\varphi_{\rm hyp}^\ell$ and $\varphi_{\rm hyp}^s$ of the zones ${\mathcal Z}_{\rm diss}(N)$, ${\mathcal Z}_{\rm hyp}^\ell(N)$ and
${\mathcal Z}_{\rm hyp}^s(N)$ by
\begin{align}
\varphi_{\rm diss}(t,\xi) &= \chi\left(|\xi|N^{-1}\right) \chi\left((1+t)|\xi|N^{-1}\right)\notag \\
\varphi_{\rm hyp}^s(t,\xi) &= \chi\left(|\xi|N^{-1}\right) \left(1-\chi\left((1+t)|\xi|N^{-1}\right)\right) \\
\varphi_{\rm hyp}^\ell(\xi) &= 1 - \chi\left(|\xi|N^{-1}\right)\notag
\end{align}
such that $\varphi_{\rm diss}(t,\xi) + \varphi_{\rm hyp}^\ell(\xi)+\varphi_{\rm hyp}^s(t,\xi) = 1.$
We consider the micro-energy 
\begin{equation} \label{mainmicro6}
U(t,\xi) = \left(  h(t,\xi) \widehat{u}, \D_t\widehat{u} \right)^T,
\end{equation}
where
\begin{equation}
 h(t,\xi) = \frac{N}{1+t} \varphi_{\rm diss}(t,\xi)+ |\xi| \left(\varphi_{\rm hyp}^\ell(\xi)+ \varphi_{\rm hyp}^s(t,\xi) \right)
\end{equation}
is a suitable time-dependent version of the usual Sobolev weight $(1+|\xi|^2)^{1/2}$ and $\D_t = -\i \partial_t$ denotes the Fourier derivative.

In the hyperbolic zone we apply a diagonalization procedure to a
first-order system corresponding to equation \eqref{KleinGordonDamping2} in order to derive a representation 
for the fundamental solution. We follow some ideas of Wirth \cite{Wirth1} and Yagdjian \cite{Y}. We will consider a system
with a coefficient matrix composed of a diagonal main part and a remainder part.
The goal of this diagonalization is to keep the diagonal part in every step of the
diagonalization and to improve the remainder terms.

To derive the asymptotic behavior of the fundamental solution 
 to \eqref{KleinGordonDamping2} in the dissipative zone we will perform, 
for $L^1$ condition (A1), one step of diagonalization and 
apply the Levinson Theorem \ref{thm:Lev1} and, for $L^{\sigma}$ condition (A2), we will apply the Hartman--Wintner Theorem~\ref{HartWintner}.
For the $L^{\sigma} $ condition we need one more step of diagonalization (see proof of  Theorem~\ref{HartWintner}).

\subsection{Treatment in the dissipative zone}

In the dissipative zone the micro-energy \eqref{mainmicro6} becomes
\begin{align}
U(t,\xi) = \left(  \frac{N}{1+t} \widehat{u}, \D_t \widehat{u}\right).
\end{align}
Therefore, equation \eqref{KleinGordonDamping2} rewrites as system
\begin{align} \label{PDKGD}
\D_t U(t,\xi) = \widetilde{A}(t,\xi) U(t,\xi) =\begin{pmatrix}
          \frac{\i}{1+t}  & \frac{N}{1+t} \\
         \frac{   (1+t)\left( |\xi|^2 + m(t) \right) }N &  \i b(t)  \end{pmatrix}U(t,\xi)
\end{align}
with coefficient matrix $\widetilde A(t,\xi)$ within ${\mathcal Z}_{\rm diss}(N)$. In order to estimates its fundamental 
solution $\mathcal{E}(t,s,\xi)$ we apply Levinson's Theorem~\ref{thm:Lev1}. Note, that the matrix $\widetilde A(t,\xi)$ behaves like $(1+t)^{-1}$
in the zone and therefore it reasonable to rewrite it as Fuchs type system
\begin{align}\label{Black6}
(1+t)\partial_t U(t,\xi) = \left( A + R(t,\xi) \right) U(t,\xi)
\end{align}
with matrices
\begin{equation}
  A = \begin{pmatrix}
     -1 & \i N  \\    \frac{   \i m_0}N  &  - b_0 
  \end{pmatrix}
\end{equation}
and 
\begin{equation}
   R(t,\xi) = \begin{pmatrix}
          0 & 0 \\
           \frac{\i(1+t)^2 |\xi|^2 +\i \left( (1+t)^2 m(t) - m_0 \right)}N &    b_0- (1+t)b(t) \end{pmatrix}.
\end{equation}
By Hypothesis  \ref{Hypo6.2}  in the form (A1) and the definition of the zone we know that
\begin{equation}
   \sup_{|\xi|<N}   \int_1^{\theta_{|\xi|}} \| R(t,\xi)\| \frac{\d t}{t} < \infty
\end{equation}
and $R(t,\xi)$ is a remainder term in the sense of Theorem~\ref{thm:Lev1}. Furthermore, as $\mathrm{tr}\, A=-1-b_0$ and $\det A= b_0 + m_0$ the eigenvalues of $A$ are given as
\begin{align}\label{eq:mu}
   \mu_\pm = -\frac{b_0+1}2 \pm \sqrt{\frac{(b_0-1)^2}4 - m_0}.
\end{align}
In particular we see that  
\begin{equation}\label{eq:cond-diffmu}
4m_0 \ne (b_0-1)^2
\end{equation}
implies that the eigenvalues are distinct.
\begin{thm} \label{WanWir}
Assume Hypothesis \ref{Hypo6.2} with $\sigma=1$ together with \eqref{eq:cond-diffmu}. Then the matrix-valued fundamental solution of
the system \eqref{Black6} satisfies
\begin{align}
\label{AsymptBehaPD1}
\|\mathcal{E}(t,s,\xi)\| \lesssim \left(\frac{1+t}{1+s}\right)^{\Re\mu_+}
\end{align}
uniformly  in $0\le s\le t$ and $(t,\xi)\in {\mathcal Z}_{\rm diss}(N)$.
\end{thm}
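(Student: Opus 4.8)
The statement follows from Levinson's Theorem~\ref{thm:Lev1} once \eqref{Black6} has been transported to a logarithmic time scale, and this is the route I would take. First I would substitute $\tau=\log(1+t)$, so that $\d\tau=\d t/(1+t)$ and, inside $\mathcal Z_{\rm diss}(N)$, the new time $\tau$ runs over $[0,T_{|\xi|}]$ with $T_{|\xi|}=\log(1+\theta_{|\xi|})$. Writing $V(\tau,\xi):=U(t(\tau),\xi)$, the Fuchs system \eqref{Black6} becomes the autonomous perturbed system $\partial_\tau V=(A+\widetilde R(\tau,\xi))V$ with constant matrix $A$ and $\widetilde R(\tau,\xi):=R(t(\tau),\xi)$. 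The crucial preliminary observation is that $\widetilde R$ is absolutely integrable in $\tau$ uniformly over the zone,
\begin{equation}
  \sup_{0<|\xi|<N}\int_0^{T_{|\xi|}}\|\widetilde R(\tau,\xi)\|\,\d\tau<\infty ,
\end{equation}
which is seen by treating $t\in[0,1]$ separately, where the entries of $R$ are continuous and bounded (using $(1+t)|\xi|\le N$ and the boundedness of $(1+t)b(t)$ and $(1+t)^2m(t)$), and on the complementary interval replacing $\d t/(1+t)$ by the comparable $\d t/t$ and invoking the uniform bound on $\int_1^{\theta_{|\xi|}}\|R(t,\xi)\|\,\d t/t$ recorded after \eqref{Black6}, which rests on Hypothesis~\ref{Hypo6.2}(A1) and the definition of the zone.

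Since \eqref{eq:cond-diffmu} makes the eigenvalues $\mu_\pm$ in \eqref{eq:mu} distinct, there is a fixed invertible matrix $M=M(b_0,m_0)$ with $M^{-1}AM=\diag(\mu_+,\mu_-)$; the substitution $W=M^{-1}V$ gives $\partial_\tau W=(\diag(\mu_+,\mu_-)+M^{-1}\widetilde RM)W$, and the perturbation still obeys the uniform integrability bound. The diagonal part being constant, the dichotomy condition of Theorem~\ref{thm:Lev1} for the pair $(\mu_+,\mu_-)$ is automatic, so the theorem supplies a fundamental matrix of the form $\Phi(\tau,\xi)=(I+o(1))\diag(\e^{\mu_+\tau},\e^{\mu_-\tau})$, with the $o(1)$ controlled uniformly in $\xi$ by the integral above. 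Hence the propagator satisfies $\|\Phi(\tau,\xi)\Phi(\sigma,\xi)^{-1}\|\lesssim\e^{\Re\mu_+(\tau-\sigma)}$ for $0\le\sigma\le\tau$, using that $\Re\mu_+\ge\Re\mu_-$ throughout: the two real parts coincide (and equal $-(b_0+1)/2$) when $4m_0>(b_0-1)^2$, while $\mu_+>\mu_-$ are real when $4m_0<(b_0-1)^2$. Undoing $W=M^{-1}V$ costs only the fixed factor $\|M\|\,\|M^{-1}\|$, and reverting to the variable $t$ turns $\e^{\Re\mu_+(\tau-\sigma)}$ into $\bigl((1+t)/(1+s)\bigr)^{\Re\mu_+}$, which is \eqref{AsymptBehaPD1} for $1\le s\le t$; the remaining cases, where $s$ stays in a bounded time interval, follow from the composition rule $\mathcal E(t,s,\xi)=\mathcal E(t,r,\xi)\mathcal E(r,s,\xi)$ together with the uniform boundedness of $\mathcal E$ over compact time intervals, the coefficient $\frac1{1+t}(A+R(t,\xi))$ of $\partial_t U$ being uniformly bounded in the zone.

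The step I expect to be delicate is the uniformity in $\xi$. Since $\mathcal Z_{\rm diss}(N)$ is exhausted as $|\xi|\to0$ with $T_{|\xi|}\to\infty$, a bare asymptotic application of Levinson's theorem for each fixed $\xi$ does not suffice; one needs the quantitative version whose error estimates are expressed entirely through $\int_0^{T_{|\xi|}}\|\widetilde R\|\,\d\tau$. The uniform finiteness of this quantity — precisely the reason Hypothesis~\ref{Hypo6.2} is stated with a supremum over $|\xi|<N$ — then delivers a uniform constant in \eqref{AsymptBehaPD1}. A minor point that must be checked is the behaviour near $|\xi|=N$, where $\theta_{|\xi|}$ may be less than $1$ so that the entire dissipative zone for that frequency lies in a compact time interval and the asserted bound is immediate.
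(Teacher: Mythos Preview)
Your proof is correct and follows essentially the same route as the paper's: both rest on Levinson's theorem (Theorem~\ref{thm:Lev1}) after recording the uniform $L^1$-bound on the perturbation $R$, use the distinctness of $\mu_\pm$ to diagonalise the constant matrix $A$, and read off the growth exponent $\Re\mu_+$. The paper makes the diagonalisation implicit by phrasing Levinson's conclusion directly in terms of the eigenvectors $e_\pm$ of $A$, and it extends $R$ by zero outside the zone rather than restricting the time interval, but these are cosmetic.

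The only substantive difference is in how the variable starting time $s$ is handled. The paper appeals to the scaling invariance of Fuchs-type systems (Remark~\ref{rem:scaling}): rescaling $t\mapsto\lambda t$ only shrinks the $L^1$-norm of the perturbation, so the estimate $\|\mathcal E(t,0,\xi)\|\lesssim(1+t)^{\Re\mu_+}$ obtained from Remark~\ref{ConsFundSol} transfers to arbitrary starting time in one stroke. You instead split into large $s$, where the Levinson asymptotic $(I+o(1))$ is uniformly invertible, and bounded $s$, handled via the composition rule and compactness. Both arguments are valid; the scaling trick is slightly slicker since it avoids fixing a threshold time, while yours makes the mechanism more transparent.
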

\begin{proof}
This follows from Theorem~\ref{thm:Lev1} applied to \eqref{Black6} with $R(t,\xi)$ extended by zero outside ${\mathcal Z}_{\rm diss}(N)$. 
 To simplify notation, we denote by $e_\pm$ the two normalised eigenvectors corresponding to $\mu_\pm$. 
 From $\mu_+\ne\mu_-$ we conclude that there exist two linearly independent solutions to \eqref{Black6} of the form
\begin{equation}
    U_\pm(t,\xi) = (e_\pm + o(1)) (1+t)^{\mu_\pm},\qquad t\to\infty
\end{equation}
within ${\mathcal Z}_{\rm diss}(N)$ and uniformly in $\xi$.
Constructing the fundamental matrix as in Remark~\ref{ConsFundSol}, we see that 
\begin{equation}
    \mathcal E(t,0,\xi) = \big( U_-(t,\xi) | U_+(t,\xi) \big) \big( U_-(0,\xi) | U_+(0,\xi)\big)^{-1}, 
\end{equation}
and hence, we obain
\begin{equation}
    \| \mathcal E(t,0,\xi) \| \lesssim   (1+t)^{\Re\mu_+}
\end{equation}
for any  $(t,\xi)\in {\mathcal Z}_{\rm diss}(N)$. Using the scaling from Remark~\ref{rem:scaling}
(taking into account the shift in time) we obtain \eqref{AsymptBehaPD1} uniformly in $0\le s\le t\le \theta_{|\xi|}$.
\end{proof}

In order to treat the form (A2) of Hypothesis 2 by the Hartmann--Wintner Theorem~\ref{HartWintner}, we need to ensure that $\Re\mu_+\ne\Re\mu_-$. This happens if both are real and distinct. The latter is equivalent to
\begin{equation}\label{eq:cond-realmu}
  4m_0 < (b_0-1)^2.
\end{equation} 

\begin{thm} \label{WanWir2}
Assume Hypothesis \ref{Hypo6.2} with $\sigma\in(1,2]$ together with \eqref{eq:cond-realmu}. Let further $\sigma'$ be the dual Lebesgue index
to $\sigma$. Then the fundamental solution of
the system \eqref{Black6} satisfies
\begin{align}
\label{AsymptBehaPDp}
\|\mathcal{E}(t,s,\xi)\| \lesssim \left(\frac{1+t}{1+s}\right)^{\mu_+} \exp\left(C \left( \ln \frac{1+t}{1+s}  \right)^{\frac{1}{\sigma'}}\right)
\end{align}
uniformly in $0\le s\le t$ and $(t,\xi)\in {\mathcal Z}_{\rm diss}(N)$.
\end{thm}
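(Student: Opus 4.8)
The plan is to transform the Fuchs type system \eqref{Black6} into one with a \emph{constant} diagonal principal part perturbed by an $L^\sigma$--remainder, and then to invoke the Hartman--Wintner Theorem~\ref{HartWintner}. First I would pass to the logarithmic time variable $\tau=\log(1+t)$, writing also $\varsigma=\log(1+s)$. Since $(1+t)\partial_t=\partial_\tau$, equation \eqref{Black6} becomes $\partial_\tau U=(A+R(\e^\tau-1,\xi))U$ with the \emph{constant} matrix $A$, and the content of Hypothesis~\ref{Hypo6.2} in form (A2) turns into an $L^\sigma$--condition with respect to $\d\tau$: after extending $R(\cdot,\xi)$ by zero outside ${\mathcal Z}_{\rm diss}(N)$ one has to verify
\[
  \sup_{|\xi|<N}\int_1^{\theta_{|\xi|}}\|R(t,\xi)\|^\sigma\,\frac{\d t}{t}<\infty .
\]
For the entries built from $b_0-(1+t)b(t)$ and $(1+t)^2 m(t)-m_0$ this is precisely condition (A2), up to the error terms $b(t)$ and $(2t+1)m(t)$ of size $O((1+t)^{-1})$, which themselves lie in $L^\sigma(\d t/t)$; for the entry $\i(1+t)^2|\xi|^2/N$ one uses that within ${\mathcal Z}_{\rm diss}(N)$ the integration runs only up to $\theta_{|\xi|}\sim N/|\xi|$, whence $\int_1^{\theta_{|\xi|}}((1+t)^2|\xi|^2/N)^{\sigma}\,\d t/t\lesssim N^{\sigma}$ uniformly in $\xi$. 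Thus, read in the variable $\tau$, $R$ is an $L^\sigma$--remainder with norm bounded uniformly in $\xi$.

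Next I would diagonalise the principal part. By \eqref{eq:cond-realmu} the eigenvalues $\mu_\pm$ from \eqref{eq:mu} are real and distinct with $\mu_+>\mu_-$, so there is a constant invertible matrix $M$, with $\|M\|$ and $\|M^{-1}\|$ independent of $\xi$, such that $M^{-1}AM=\diag(\mu_-,\mu_+)=:\mathcal D$. Setting $V=M^{-1}U$ yields $\partial_\tau V=(\mathcal D+\widetilde R)V$ with $\widetilde R=M^{-1}RM$ again an $L^\sigma(\d\tau)$--remainder uniformly in $\xi$; since $\mathcal D$ is constant, diagonal, with real, pairwise distinct entries, the dichotomy condition needed in Theorem~\ref{HartWintner} holds trivially. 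Applying that theorem — in whose proof the extra step of diagonalisation improving the off-diagonal part of $\widetilde R$ to an integrable remainder is carried out — produces a fundamental matrix $\widetilde{\mathcal E}(\tau,\varsigma,\xi)$ with
\[
  \|\widetilde{\mathcal E}(\tau,\varsigma,\xi)\|\lesssim \e^{\mu_+(\tau-\varsigma)}\exp\bigl(C\,(\tau-\varsigma)^{1/\sigma'}\bigr),
\]
the correction exponent $1/\sigma'$ arising from H\"older's inequality $\int_\varsigma^\tau\|\widetilde R(u,\xi)\|\,\d u\le(\tau-\varsigma)^{1/\sigma'}\|\widetilde R(\cdot,\xi)\|_{L^\sigma}\lesssim(\tau-\varsigma)^{1/\sigma'}$ together with $\mu_+>\mu_-$; indeed, granting only this last bound, a Duhamel representation and Gronwall's inequality already yield the norm estimate. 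Undoing the substitutions gives $\mathcal E(t,s,\xi)=M\,\widetilde{\mathcal E}(\log(1+t),\log(1+s),\xi)\,M^{-1}$, and since $\tau-\varsigma=\log\frac{1+t}{1+s}$ this is exactly \eqref{AsymptBehaPDp}; for $(t,\xi)\notin{\mathcal Z}_{\rm diss}(N)$ the remainder vanishes and the estimate is immediate.

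The work genuinely specific to this statement is the uniform-in-$\xi$ $L^\sigma$--bound for $R$ over the entire dissipative zone, the one subtle point being the non-decaying-looking term $(1+t)^2|\xi|^2/N$, which is tamed only through the cut-off at $\theta_{|\xi|}$; checking that \eqref{eq:cond-realmu} is precisely the eigenvalue separation required by Hartman--Wintner is then routine. The more delicate analysis — the second diagonalisation that renders the off-diagonal remainder integrable, followed by a Levinson-type argument producing an explicit growth rate — is encapsulated in Theorem~\ref{HartWintner} and need not be reproduced here; should one wish to bypass it, the elementary Duhamel--Gronwall argument sketched above is the fallback, at the cost of losing the refined asymptotic profile of the solutions.
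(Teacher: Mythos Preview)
Your proposal is correct and follows essentially the same route as the paper: extend $R$ by zero outside the zone, diagonalise the constant matrix $A$ (using \eqref{eq:cond-realmu} to get real distinct eigenvalues), verify the uniform $L^\sigma(\d t/t)$ bound on the transformed remainder---with the same care about the $(1+t)^2|\xi|^2/N$ term being handled by the cut-off at $\theta_{|\xi|}$---and then invoke the Hartman--Wintner machinery together with H\"older's inequality to produce the $(\ln\frac{1+t}{1+s})^{1/\sigma'}$ correction. Your explicit passage to $\tau=\log(1+t)$ is cosmetic, since the appendix theorems are already formulated for Fuchs-type systems.

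One genuine difference worth flagging: the paper carries out the full chain Hartman--Wintner $\to$ Levinson $\to$ scaling (Remark~\ref{rem:scaling}) to obtain the two-parameter estimate in $(s,t)$, whereas your Duhamel--Gronwall fallback---writing $f(\tau)=\e^{-\mu_+(\tau-\varsigma)}\|\widetilde{\mathcal E}(\tau,\varsigma,\xi)\|$ and bounding $f(\tau)\le 1+\int_\varsigma^\tau\|\widetilde R\|f$, then $f(\tau)\le\exp\bigl(\int_\varsigma^\tau\|\widetilde R\|\bigr)\le\exp(C(\tau-\varsigma)^{1/\sigma'})$---gives \eqref{AsymptBehaPDp} directly and more cheaply, at the price of forfeiting the sharp asymptotic profiles $V_k\sim(e_k+o(1))t^{\mu_k}\exp(\cdots)$ that the paper's route also delivers. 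For the purposes of this theorem (a norm bound only) your shortcut is entirely adequate; be aware, though, that Theorem~\ref{HartWintner} as stated yields only the transformation to Levinson form, not the final estimate, so if you keep the paper's chain you should still cite Theorem~\ref{thm:Lev1} and Remark~\ref{rem:scaling} explicitly rather than attributing the bound to Theorem~\ref{HartWintner} alone.
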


\begin{proof}
As in the previous case we extend $R(t,\xi)$ by zero outside ${\mathcal Z}_{\rm diss}(N)$ and denote by $e_\pm$ normalised eigenvectors of $A$
corresponding to $\mu_\pm$. Forming the unitary matrix $P=(e_-|e_+)$ with these eigenvectors as columns
and defining $\widetilde R(t,\xi) = P^{-1} R(t,\xi) P$ allows to rewrite \eqref{Black6} in the new unknown $\widetilde U(t,\xi) = P U(t,\xi)$ as
\begin{equation}
    (1+t)\partial_t  \widetilde U(t,\xi) = \big( \diag(\mu_-,\mu_+) + \widetilde R(t,\xi) \big)\widetilde U(t,\xi)
\end{equation}
We apply Theorem~\ref{HartWintner} to this system. As $\mu_\pm$ are real and distinct, they clearly satisfy \eqref{eq:LevC2}. Furthermore, 
the matrix $\widetilde R(t,\xi)$ contains combinations of $(1+t)b(t)-b_0$ and $(1+t)^2m(t)-m_0$ controlled by (A2) and terms of the form  $(1+t)^2|\xi|^2$ which are uniformly bounded and integrable with respect to $\d t/t$ by the definition of the zone.
Hence, Hypothesis \ref{Hypo6.2} in the form (A2) implies \eqref{eq:HWcond} with $\sigma\in(1,2]$. Therefore, Theorem~\ref{HartWintner} applies and gives a
a matrix $N(t,\xi) \in L^\sigma(\R_+,\d t/t)$ transforming \eqref{Black6} for $t\ge t_0$ into Levinson form
\begin{equation}
    (1+t) \partial_t V(t,\xi)  = \big( \diag(\mu_-+\widetilde r_{--} ,\mu_++\widetilde r_{++}) + \widetilde R_1(t,\xi) \big)V(t,\xi) 
\end{equation}
in the new unknown $V(t,\xi) = (I+N(t,\xi))^{-1} \widetilde U(t,\xi)$ and with the new remainder $\widetilde R_1\in L^1([t_0,\infty),\d t/t)$. By $\widetilde r_{--}(t,\xi)$
and $\widetilde r_{++}(t,\xi)$ we denote the diagonal entries of $\widetilde R(t,\xi)$. The new diagonal part satisfies the dichotomy condition
\eqref{eq:LevCond1}, the additional diagonal entries satisfy by H\"older's inequality
\begin{equation}
    \int_s^t |\widetilde r_{++}(\tau,\xi) | \frac{\d\tau}{1+\tau}  \le C \left(\ln \frac{1+t}{1+s}\right)^{\frac1{\sigma'}} 
\end{equation}
with $\sigma'$ the dual index and are thus small compared to 
\begin{equation}
    \int_s^t (\mu_+-\mu_-)\frac{\d\tau}{1+\tau} = (\mu_+-\mu_-)  \left(\ln \frac{1+t}{1+s}\right).
\end{equation}
Hence, Levinson's theorem~\ref{thm:Lev1} yields a fundamental system of solutions together with the estimate
\begin{equation}
   \| \mathcal E_V(t,t_0,\xi) \| \le (1+t)^{\mu_+} \exp\left(  C \left(\ln(1+t)\right)^{\frac1{\sigma'}}  \right), \quad t\ge t_0,
\end{equation}
for the matrix-valued fundamental solution to the transformed system. The scaling argument from Remark~\ref{rem:scaling}
extends this estimate to variable starting times $t_0\le s\le t\le \theta_{|\xi|}$ as 
\begin{equation}
   \| \mathcal E_V(t,s,\xi) \| \lesssim \left(\frac{1+t}{1+s}\right)^{\mu_+} \exp\left(  C \left(\ln \frac{1+t}{1+s}\right)^{\frac1{\sigma'}}  \right).
\end{equation}
Transforming back to the original system combined with compactness of the remaining bit of ${\mathcal Z}_{\rm diss}(N)$ where the transform was not defined yields the desired statement. The theorem is proved.
\end{proof}

\begin{rem} \label{rota98}
If $2\Re \mu_+<-{b_0}$, i.e., if
\begin{equation}\label{eq:cond-smallmu}
    b_0(b_0-2) < 4m_0, 
\end{equation}
then Theorems~\ref{WanWir} and \ref{WanWir2} imply
\begin{align}\label{eq:Eest-lambda-Zpd}
\|  \mathcal{E}(t,s,\xi) \| \lesssim \frac{\lambda(s)}{\lambda(t)}
\end{align}
for all $0\le s\le t$ and  $(t,\xi) \in {\mathcal Z}_{\rm diss}(N).$ In the first case this is obvious, while in the second case we observe that 
 for all $\varepsilon > 0$ there exists a constant $c_{\varepsilon}$ such that
\begin{align}
\exp\left(C \left( \ln \frac{1+t}{1+s}   \right)^{\frac{1}{\sigma'}}\right) \leq c_{\varepsilon} \left( \frac{1+t}{1+s} \right)^{\varepsilon}.
\end{align} 
Therefore
\begin{multline}
\left( \frac{1+t}{1+s}\right)^{\mu_+}  \exp\left(C \left( \ln \frac{1+t}{1+s}   \right)^{\frac{1}{\sigma'}}\right)
\lesssim \left( \frac{1+t}{1+s}
\right)^{\mu_+ + \varepsilon } \\
 \lesssim   \left( \frac{1+t}{1+s} \right)^{-\frac{b_0}{2} - \varepsilon }  \lesssim \frac{\lambda(s)}{\lambda(t)}
\end{multline}
uniformly in $0\le s\le t$.
\end{rem}

\subsection{The zone-boundary} In order to combine the estimates from the dissipative zone with the treatment in the hyperbolic 
zone, we need one further estimate. It is conditional in the sense that it is entirely based on the final estimate from the dissipative zone and not on the precise assumptions used to prove it. It is also the first statement using Hypothesis  \ref{Hypo6.1}.

\begin{lem} \label{PeqLpLqIm}
Assume  Hypothesis \ref{Hypo6.1} and Hypothesis \ref{Hypo6.2} in combination with \eqref{eq:cond-smallmu}. Then for $|\xi| \leq N$ the symbol-like estimates
\begin{align}
\big\| \D_\xi^\alpha    \mathcal{E}(\theta_{|\xi|},0,\xi)   \big\| \leq C_\alpha \frac{1}{\lambda(\theta_{|\xi|})} |\xi|^{-|\alpha|}
\end{align}
are valid for all $|\alpha| \leq \ell$.
\end{lem}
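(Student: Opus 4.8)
The plan is to prove first, with the final time held fixed, the stronger family of symbol-type estimates
\begin{equation}\label{eq:plan-int}
\big\| \D_\xi^\alpha \mathcal E(t,0,\xi) \big\| \lesssim \frac1{\lambda(t)}\,|\xi|^{-|\alpha|},\qquad 0<|\xi|\le N,\quad (t,\xi)\in{\mathcal Z}_{\rm diss}(N),
\end{equation}
and only afterwards to account for the $\xi$-dependence of the boundary time $\theta_{|\xi|}$. The case $\alpha=0$ of \eqref{eq:plan-int} is Remark~\ref{rota98} together with $\lambda(0)=1$. For the inductive step I would differentiate the relation $\partial_t\mathcal E(t,0,\xi)=\i\widetilde A(t,\xi)\mathcal E(t,0,\xi)$ in $\xi$, use that $\D_\xi^\alpha\mathcal E$ vanishes at $t=0$ for $|\alpha|\ge1$, and read off the Duhamel representation
\begin{equation}\label{eq:plan-duh}
\D_\xi^\alpha\mathcal E(t,0,\xi)=\i\int_0^t\mathcal E(t,s,\xi)\sum_{0\ne\beta\le\alpha}\binom{\alpha}{\beta}\big(\D_\xi^\beta\widetilde A(s,\xi)\big)\,\D_\xi^{\alpha-\beta}\mathcal E(s,0,\xi)\,\d s .
\end{equation}
Since the entries of $\widetilde A(t,\xi)$ depend on $\xi$ only through the quadratic term $(1+t)|\xi|^2/N$, one has $\D_\xi^\beta\widetilde A\equiv0$ for $|\beta|\ge3$ and $\|\D_\xi^\beta\widetilde A(s,\xi)\|\lesssim(1+s)\,|\xi|^{2-|\beta|}$ for $|\beta|\in\{1,2\}$. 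Inserting this together with $\|\mathcal E(t,s,\xi)\|\lesssim\lambda(s)/\lambda(t)$ — whose factor $\lambda(s)$ cancels the $\lambda(s)^{-1}$ furnished by the induction hypothesis for $\D_\xi^{\alpha-\beta}\mathcal E$, which may indeed be applied since $(s,\xi)$ lies again in the zone for $s\le t$ — the integrand in \eqref{eq:plan-duh} is bounded by $C\lambda(t)^{-1}(1+s)|\xi|^{2-|\alpha|}$. Integration in $s$ gains the factor $(1+t)^2$, and the zone condition $(1+t)|\xi|\le N$ turns $(1+t)^2|\xi|^2$ into an absolute constant; this yields \eqref{eq:plan-int}. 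Note that this part uses nothing about the regularity index $\ell$, as $\D_\xi$ never falls on $b$ or $m$.

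To pass from \eqref{eq:plan-int} to the assertion, I would use $\theta_{|\xi|}=N|\xi|^{-1}-1$, hence $|\D_\xi^\gamma\theta_{|\xi|}|\lesssim|\xi|^{-1-|\gamma|}$, and expand $\D_\xi^\alpha\big[\mathcal E(\theta_{|\xi|},0,\xi)\big]$ by the Fa\`a di Bruno formula into a finite sum of terms
\begin{equation*}
\big(\partial_t^j\D_\xi^\beta\mathcal E\big)(\theta_{|\xi|},0,\xi)\,\prod_{p=1}^{j}\D_\xi^{\gamma_p}\theta_{|\xi|},\qquad |\beta|+\sum_{p=1}^j|\gamma_p|=|\alpha|,\quad|\gamma_p|\ge1 .
\end{equation*}
What is then needed is the mixed estimate $\|\partial_t^j\D_\xi^\beta\mathcal E(\theta_{|\xi|},0,\xi)\|\lesssim\lambda(\theta_{|\xi|})^{-1}|\xi|^{\,j-|\beta|}$. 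I would obtain it by trading each $\partial_t$ for the right-hand side of the (differentiated) equation: $\partial_t^j\D_\xi^\beta\mathcal E$ is a finite sum of terms $\big(\prod_{i=1}^r\partial_t^{j_i}\D_\xi^{\beta_i}\widetilde A\big)\,\D_\xi^{\beta_0}\mathcal E$ with $\sum_{i=1}^r(j_i+1)=j$ and $\sum_{i=0}^r|\beta_i|=|\beta|$. On the zone boundary, where $1+\theta_{|\xi|}\asymp|\xi|^{-1}$, the bounds of Hypothesis~\ref{Hypo6.1} give $\|\partial_t^{j_i}\D_\xi^{\beta_i}\widetilde A(\theta_{|\xi|},\xi)\|\lesssim|\xi|^{\,1+j_i-|\beta_i|}$ (again vanishing once $|\beta_i|\ge3$), so that, combined with \eqref{eq:plan-int}, all powers of $|\xi|$ add up to exactly $j-|\beta|$; feeding this back into the Fa\`a di Bruno sum, the exponents $j-|\beta|$ and $-1-|\gamma_p|$ combine to $-|\alpha|$, which is the claimed estimate. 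It is precisely this repeated use of the equation — which calls for the derivatives $\partial_t^k b$, $\partial_t^k m$ up to order $k=|\alpha|-1$, with the bounds supplied by Hypothesis~\ref{Hypo6.1} — where the restriction $|\alpha|\le\ell$ is used.

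The main obstacle is the bookkeeping in this last step: one has to carry out the repeated differentiation keeping the $|\xi|$-degrees and the powers of $1+t$ under simultaneous control and then verify that they balance after restriction to $t=\theta_{|\xi|}$, and one must check that every implied constant is uniform in $\xi$ and over the (finitely many) terms in the various sums. A convenient device is to pass first to the time variable $\tau=\ln(1+t)$, in which \eqref{Black6} becomes $\partial_\tau U=(A+R(\tau,\xi))U$ with $A$ constant and with $\|\partial_\tau^k R(\tau,\xi)\|$ uniformly bounded on the zone up to the available regularity; then the $\partial_\tau$-derivatives of the fundamental solution produce only bounded factors, and the computation above reduces to counting the $|\xi|$-degrees generated by $\D_\xi$ acting on the entry $\i(1+t)^2|\xi|^2/N$ of $A+R$, whose $\xi$-derivatives are the only ones not uniformly bounded on the zone.
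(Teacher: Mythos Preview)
Your proof is correct and follows essentially the same route as the paper: first establish $\|\D_\xi^\alpha\mathcal E(t,0,\xi)\|\lesssim\lambda(t)^{-1}|\xi|^{-|\alpha|}$ throughout the dissipative zone by Duhamel/induction, then obtain the mixed estimate $\|\partial_t^k\D_\xi^\alpha\mathcal E\|\lesssim\lambda(t)^{-1}(1+t)^{-k}|\xi|^{-|\alpha|}$ from the equation and feed it into Fa\`a di Bruno together with $|\D_\xi^\gamma\theta_{|\xi|}|\lesssim|\xi|^{-1-|\gamma|}$. Your bookkeeping is somewhat more explicit than the paper's (you track the precise $|\xi|$-degrees of $\D_\xi^\beta\widetilde A$ rather than just using $\|\D_\xi^\beta\widetilde A\|\lesssim1$ in the zone), and your observation that the regularity index $\ell$ enters only in the second step---through the $t$-derivatives of $b$ and $m$ needed to bound $\partial_t^{j_i}\widetilde A$---is a nice sharpening of what the paper leaves implicit.
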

\begin{proof}
To prove this fact we use Duhamel's formula for $\xi-$derivatives of \eqref{PDKGD}. Let
first $|\alpha|=1.$ Then $\D_t\D_\xi \mathcal E = \left( \D_\xi^\alpha \widetilde{A} \right) \mathcal{E} + \widetilde{A}  \left( \D_\xi^\alpha \mathcal{E}\right)$
and thus using $\D_\xi^\alpha \mathcal{E}(0,0,\xi)=0$ we obtain the representation
\begin{align}
\D_\xi^\alpha \mathcal{E}(t,0,\xi) =\i \int_0^t  \mathcal{E}(t,\tau,\xi) \left( \D_\xi^\alpha \widetilde{A}(\tau,\xi) \right)\mathcal{E}(\tau,0,\xi) \d\tau.
\end{align}
Since $\| \D_\xi^\alpha \widetilde{A}(t,\xi)  \| \lesssim 1$ this implies from \eqref{eq:Eest-lambda-Zpd} the estimate
\begin{align}
\| \D_\xi^\alpha \mathcal{E}(t,0,\xi)  \| \lesssim \frac{t}{\lambda(t)} \lesssim  \frac{1}{\lambda(t)}|\xi|^{-1}
\end{align}
uniformly on ${\mathcal Z}_{\rm diss}(N)$.

For $|\alpha| = \ell > 1$ we use Leibniz formula to represent $\D_\xi^\alpha \mathcal{E}(t,0,\xi) $ by a corresponding
Duhamel integral using a sum of terms $\D_\xi^{\alpha_1} \widetilde{A}(t,\xi) \D_\xi^{\alpha_2} \mathcal{E}(t,0,\xi) $
for $|\alpha_1|+|\alpha_1| \leq \ell$, $\alpha_2 < \alpha$ and apply induction over $\ell$ to obtain
\begin{align}
\| \D_\xi^\alpha \mathcal{E}(t,0,\xi)    \| \lesssim \frac{1}{\lambda(t)} |\xi|^{-|\alpha|}
\end{align}
uniformly within ${\mathcal Z}_{\rm diss}(N)$.

Derivates of $\mathcal{E}$ with respect to $t$ are estimated directly by the differential equation
and $\| \widetilde A(t,\xi)\| \lesssim (1+t)^{-1}$ giving
\begin{equation}
  \| \partial_t^k \D_\xi^\alpha \mathcal E(t,0,\xi) \| \le \frac1{\lambda(t)}  \left(\frac1{1+t}\right)^k|\xi|^{-|\alpha|}.
\end{equation} 
But now the statement follows from Fa\`{a} di Bruno's formula combined with 
\begin{align}\label{eq:theta-der}
|\D_\xi^\alpha \theta_{|\xi|}| \lesssim |\xi|^{-1-|\alpha|}.
\end{align}
\end{proof}

\begin{rem}
\label{defSymbClasMT}
The result of the Lemma \ref{PeqLpLqIm} can be reformulated in the following form.
The symbol $\lambda(\theta_{|\xi|}) \mathcal{E}(\theta_{|\xi|},0,\xi) $ is an element
of the homogeneous symbol class
\begin{align}
\dot{S}_{\ell}^0 = \{ m \in C^\infty(\R^n \setminus \{0\})\;:\; |\D_\xi^\alpha m(\xi) | \leq C_\alpha |\xi|^{-|\alpha|} \;\text{for all $ |\alpha| \leq \ell$}    \}
\end{align}
of order zero and restricted smoothness $\ell $.
\end{rem}

\subsection{Treatment in the hyperbolic zone}
First, we recall the definition of the hyperbolic symbol class $\mathcal S_N^{\ell}\{m_1,m_2\}$ from \cite{RW11} and \cite{RW14}.

\begin{defn}
The time-dependent amplitude function $a=a(t,\xi)$ belongs to the hyperbolic symbol class $\mathcal S_N^{\ell}\{m_1,m_2\}$ with restricted smoothness $\ell$
if it satisfies the symbol estimates
\begin{equation}
\label{mfourier}\left|\D_t^k\D_\xi^\alpha a(t,\xi)\right| \leq C_{k,\alpha}|\xi|^{m_1-|\alpha|} \left( \frac{1}{1+t}\right)^{m_2+k}
\end{equation}
for all $(t,\xi) \in {\mathcal Z}_{\rm hyp}(N)$, all non-negative integers $k \leq \ell$ and all multi-indices $\alpha \in \mathbb{N}^n$.
We will further use the notation 
\begin{equation} 
     \mathcal H^{\ell}_N \{k\} = \bigcap_{m_1+m_2=k} \mathcal S^{\ell}_N\{m_1,m_2\}
\end{equation}   
and 
$\mathcal S_N\{m_1,m_2\}$ as short-hand for $\mathcal S_N^{\infty}\{m_1,m_2\}$ and similarly for $\mathcal H_N\{k\}$. 
\end{defn}

\subsubsection{Diagonalisation}
Within the hyperbolic zone the micro-energy \eqref{mainmicro6} satisfies the system
\begin{equation} \label{Stutt}
\D_t U = A(t,\xi) U
\end{equation}
with
\begin{equation}
 A(t,\xi) =\begin{pmatrix}
          0  & |\xi| \\
           |\xi| + \frac{m(t)}{|\xi|}  &  \i b(t)  \end{pmatrix} \mod \mathcal H_N\{1\}
\end{equation}
as consequence of $h(t,\xi) = |\xi| \mod \mathcal H_N\{1\}$. Note, that $A(t,\xi)\in \mathcal S_N^{\ell}\{1,0\}$.
We denote by $\mathcal{E}(t,s,\xi)$ the fundamental solution to \eqref{Stutt}, i.e., the matrix-valued solution to
\begin{align} \label{Stuttg}
\D_t \mathcal{E}(t,s,\xi)  = A(t,\xi)   \mathcal{E}(t,s,\xi), \qquad
                    \mathcal{E}(s,s,\xi) = \mathrm I \in \C^{2\times 2}
\end{align}
for $t\ge s$ and $(s,\xi)\in {\mathcal Z}_{\rm hyp}(N)$. In order to estimate $\mathcal E(t,s,\xi)$ we follow \cite{RW11} and  apply a diagonalisation scheme within the hyperbolic symbol classes $\mathcal S_N^{\ell}\{\cdot,\cdot\}$.  We sketch the main steps, but refer for further details to \cite{RW14} and \cite{RW11}.

{\sl Preliminary step.} In the first step we use the matrix
\begin{align}
M = \frac{1}{\sqrt{2}}\begin{pmatrix}
          1  & -1 \\
           1   &  1  \end{pmatrix}, \qquad M^{-1} = \frac{1}{\sqrt{2}} \begin{pmatrix}
          1  & 1 \\
           -1   &  1  \end{pmatrix},
\end{align}
to transform the principal part of  the system. This yields for $V(t,\xi) = M^{-1}U(t,\xi)$ the new system
\begin{align}\label{firstStep}
\D_tV(t,\xi) = \big(D(\xi) +  B(t) + C(t,\xi) \big)V(t,\xi)
\end{align}  
with coefficient matrices
\begin{align}
D(\xi) &= \diag(|\xi|, -|\xi|)\in \mathcal S_N\{1,0\},\\
 B(t) &= \frac{\i b(t)}{2}\begin{pmatrix}
          1  &  1 \\
           1   &  1  \end{pmatrix} \in \mathcal S_N^{\ell}\{0,1\}\\
\intertext{ and}
  C(t,\xi) &= \frac{m(t)}{2|\xi|}\begin{pmatrix}
          1  & -1 \\
           1   &  -1  \end{pmatrix}\in \mathcal S_N^{\ell}\{-1,2\}.
\end{align}
The matrix $D(\xi)$ gives the diagonal principal part and our aim is to apply further transformations to improve the symbolic behaviour of the remainders.  

{\sl First step.} We give the first step in detail. We want to find a matrix-valued symbol $N^{(1)}(t,\xi)$ from $\mathcal S_N^{\ell}\{-1,1\}$ and a diagonal matrix
$F^{(0)}(t,\xi)$ from $\mathcal S_N^{\ell}\{0,1\}$ such that the operator identity
\begin{multline}
   B^{(1)}(t,\xi) =   \big( \D_t - D(\xi) -  B(t) - C(t,\xi) \big) (\mathrm I+N^{(1)}(t,\xi)) \\
   - (\mathrm I+N^{(1)}(t,\xi)) \big( \D_t - D(\xi) - F^{(0)}(t,\xi)\big)
\end{multline}
defines a matrix $B^{(1)}(t,\xi)$ from $\mathcal S_N^{\ell-1}\{-1,2\}$. In order for this to happen, we have to satisfy the commutator identity
\begin{equation}
   [D(\xi), N^{(1)}(t,\xi)] + B(t) = F^{(0)}(t,\xi). 
\end{equation}
As $D(\xi)$ is diagonal, the commutator on the left has zero diagonal entries. Therefore, $F^{(0)}(t,\xi)=\diag B(t)$ (where $\diag$ denotes the diagonal part of the matrix). In particular it follows that $F^{(0)}(t,\xi)$ is independent of $\xi$ and is of the desired class. Similarly, the off-diagonal parts of $N^{(1)}(t,\xi)$ are
determined by the commutator equation while we are free to choose the diagonal entries. Requiring in addition that $\diag N^{(1)}(t,\xi)=0$, we obtain
\begin{equation}
    N^{(1)}(t,\xi) = \frac{\i b(t)}{4|\xi|} \begin{pmatrix} 0& 1 \\ -1 & 0 \end{pmatrix}
\end{equation}
and it is clear that this matrix belongs to the symbol class $\mathcal S_N^{\ell}\{-1,1\}$.

{\sl Iterative improvements.}
We construct recursively matrices $N^{(k)} (t,\xi)$ from the classes $\mathcal S^{\ell+1-k}\{-k,k\}$ and diagonal matrices $F^{(k)}(t,\xi)$ from $\mathcal S_N^{\ell+1-k}\{1-k,k\}$
such that for 
\begin{equation}
  N_k(t,\xi) = \mathrm I+\sum_{j=1}^k N^{(j)}(t,\xi), \qquad   F_{k-1}(t,\xi) = \sum_{j=0}^{k-1} F^{(j)}(t,\xi),   
\end{equation}
the operator identity
\begin{multline}
   B^{(k)}(t,\xi) =   \big( \D_t - D(\xi) -  B(t) - C(t,\xi) \big)N_k(t,\xi)\\
   - N_k(t,\xi) \big( \D_t - D(\xi) - F_{k-1}(t,\xi)\big)
\end{multline}
yields a remainder $B^{(k)}(t,\xi) \in \mathcal S_N^{\ell-k}\{-k,k+1\}$. This again implies a commutator identity and from that 
 \begin{align}
    \label{nayla}  F^{(k-1)}(t,\xi)  &= \diag B^{(k-1)}(t,\xi), \\
      N^{(k)}(t,\xi) &=\frac1{2|\xi|} \begin{pmatrix}
                   0 & -(B^{(k-1)}(t,\xi))_{12} \\
                     {(B^{(k-1)}(t,\xi))_{21}} & 0 \end{pmatrix},
 \end{align}
 where $(\cdot)_{ij}$ stands for the $ij$-entry of the matrix.  By induction we also obtain that all matrices belong to the desired symbol classes.
\begin{prop} \label{judô2}
Assume  Hypothesis \ref{Hypo6.1} with derivatives up to order $\ell$. 
Then $N^{(k)} \in \mathcal S_N^{\ell-k+1}\{-k,k\}$ and $B^{(k)}, F^{(k)} \in \mathcal S_N^{\ell-k}\{-k,k+1\}$ holds true for all $k=1,\ldots,\ell$.
Moreover, for any such $k$ we find a zone constant $N$ such that the matrix $N_k(t,\xi)$ is invertible in
${\mathcal Z}_{\rm hyp}(N)$ and $N_k(t,\xi)^{-1}\in \mathcal S_N^{\ell-k+1}\{0,0\}$.
\end{prop}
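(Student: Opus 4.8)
\emph{Proof strategy.}
The plan is to prove the symbol-class memberships by induction on $k$ and then to deduce invertibility of $N_k$ together with the membership of its inverse from a Neumann-series argument. Everything rests on the defining property of the hyperbolic zone, that $(1+t)|\xi|\ge N$ on ${\mathcal Z}_{\rm hyp}(N)$. Before the induction I would record the elementary calculus of the classes $\mathcal S_N^\ell\{m_1,m_2\}$ of \eqref{mfourier}: they form a filtered algebra under matrix multiplication, $\mathcal S_N^\ell\{m_1,m_2\}\cdot\mathcal S_N^\ell\{m_1',m_2'\}\subseteq\mathcal S_N^\ell\{m_1+m_1',m_2+m_2'\}$; the Fourier derivatives act by $\D_t\colon\mathcal S_N^\ell\{m_1,m_2\}\to\mathcal S_N^{\ell-1}\{m_1,m_2+1\}$ and $\D_\xi\colon\mathcal S_N^\ell\{m_1,m_2\}\to\mathcal S_N^{\ell}\{m_1-1,m_2\}$, so that only $t$-differentiation costs smoothness; $|\xi|^{-1}\in\mathcal S_N\{-1,0\}$ on the zone; and, crucially, $(1+t)^{-1}\le N^{-1}|\xi|$ on the zone gives the embedding $\mathcal S_N^\ell\{m_1,m_2\}\hookrightarrow\mathcal S_N^\ell\{m_1+j,m_2-j\}$ for every $j\ge0$, with an implicit constant carrying a factor $N^{-j}$, so $t$-decay may be traded for $\xi$-growth.

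The base case $k=1$ is what was checked above: $N^{(1)}\in\mathcal S_N^\ell\{-1,1\}$ by Hypothesis \ref{Hypo6.1}, and unfolding the operator identity gives $B^{(1)}=\D_t N^{(1)}-C-(B+C)N^{(1)}+N^{(1)}F^{(0)}$, all of whose summands lie in $\mathcal S_N^{\ell-1}\{-1,2\}$ by the algebra and the embedding, whence $F^{(1)}=\diag B^{(1)}$ does as well. For the step $k-1\to k$ with $k\ge2$ I assume $B^{(k-1)}\in\mathcal S_N^{\ell-k+1}\{-(k-1),k\}$, note $F^{(k-1)}=\diag B^{(k-1)}$ lies in the same class, and observe that the nonzero entries of $N^{(k)}$ are, by \eqref{nayla}, those of $B^{(k-1)}$ multiplied by $\pm(2|\xi|)^{-1}$; since $|\xi|^{-1}\in\mathcal S_N\{-1,0\}$ this gives $N^{(k)}\in\mathcal S_N^{\ell-k+1}\{-k,k\}$. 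Unfolding the operator identity recursively in terms of $B^{(k-1)}$ then expresses $B^{(k)}$ as
\begin{multline*}
  B^{(k)}=\Big(B^{(k-1)}-[D(\xi),N^{(k)}]-F^{(k-1)}\Big)+\D_t N^{(k)}-(B+C)N^{(k)}\\
  {}+N^{(k)}F_{k-2}+(N_{k-1}-\mathrm I)F^{(k-1)}+N^{(k)}F^{(k-1)},
\end{multline*}
where the bracketed top-order part vanishes identically (up to the sign bookkeeping of the identity) because $N^{(k)}$ and $F^{(k-1)}$ are defined through exactly that commutator relation in \eqref{nayla}. Each of the surviving summands is then seen to lie in $\mathcal S_N^{\ell-k}\{-k,k+1\}$: the term $\D_t N^{(k)}$ lands there directly and is the only source of regularity loss, so $B^{(k)}$ and $F^{(k)}=\diag B^{(k)}$ acquire precisely the asserted smoothness $\ell-k$; the products $(B+C)N^{(k)}$, $N^{(k)}F_{k-2}$, $(N_{k-1}-\mathrm I)F^{(k-1)}$ and $N^{(k)}F^{(k-1)}$ a priori carry bidegrees with stronger $t$-decay but more negative $\xi$-order than $\{-k,k+1\}$ and are pulled into $\mathcal S_N^{\ell-k+1}\{-k,k+1\}\subseteq\mathcal S_N^{\ell-k}\{-k,k+1\}$ by the embedding. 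This closes the induction.

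For the invertibility, the memberships just proved give $\|N^{(j)}(t,\xi)\|\lesssim|\xi|^{-j}(1+t)^{-j}=\big((1+t)|\xi|\big)^{-j}\le N^{-j}$ on ${\mathcal Z}_{\rm hyp}(N)$, hence $\|N_k(t,\xi)-\mathrm I\|\le\sum_{j=1}^k c_j N^{-j}\le\tfrac12$ as soon as $N$ is large enough depending on $k$, and a Neumann series yields a uniformly bounded inverse. To place $N_k^{-1}$ in $\mathcal S_N^{\ell-k+1}\{0,0\}$ I would differentiate $N_kN_k^{-1}=\mathrm I$: using $N_k-\mathrm I\in\mathcal S_N^{\ell-k+1}\{-1,1\}\subseteq\mathcal S_N^{\ell-k+1}\{0,0\}$, the boundedness of $N_k^{-1}$, the relations $\D_\xi N_k^{-1}=-N_k^{-1}(\D_\xi N_k)N_k^{-1}$ and $\D_t N_k^{-1}=-N_k^{-1}(\D_t N_k)N_k^{-1}$, the algebra and the embedding, one obtains by induction on the order of differentiation (up to $\ell-k+1$ in $t$) the estimates $|\D_t^p\D_\xi^\alpha N_k^{-1}|\lesssim|\xi|^{-|\alpha|}(1+t)^{-p}$.

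The main obstacle is the index bookkeeping in the expansion of $B^{(k)}$: one has to make sure that every mixed product surviving the commutator cancellation — for instance $CN^{(k)}$ of bidegree $\{-k-1,k+2\}$, or $N^{(k)}F^{(k-1)}$ of bidegree $\{-2k+1,2k\}$ — is absorbed into the target class $\mathcal S_N^{\ell-k}\{-k,k+1\}$, which is only possible through the zone embedding and hence uses $(1+t)|\xi|\ge N$ in an essential way. A secondary subtlety is that the single $\D_t$ falling on $N^{(k)}$ costs exactly one order of smoothness, so the scheme loses one derivative per step and the bound $\ell-k$ is optimal within it, and that the zone constant $N$ ensuring invertibility (and the $\{0,0\}$-estimates for $N_k^{-1}$ up to order $\ell-k+1$) must be taken depending on $k$.
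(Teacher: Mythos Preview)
Your proposal is correct and is precisely the inductive argument the paper only sketches (the paper gives no detailed proof here, merely the recursive formulas \eqref{nayla} followed by ``By induction we also obtain that all matrices belong to the desired symbol classes'', with a reference to \cite{RW11}, \cite{RW14}); the symbol-class calculus, the identification of $\D_t N^{(k)}$ as the sole smoothness-losing term, the zone embedding to absorb the mixed products, and the Neumann-series argument for $N_k^{-1}\in\mathcal S_N^{\ell-k+1}\{0,0\}$ are exactly the required ingredients. One minor point: in your displayed recursion for $B^{(k)}$ the term should read $+F^{(k-1)}$ rather than $-F^{(k-1)}$, so the cancellation actually forces $F^{(k-1)}=-\diag B^{(k-1)}$ (a sign typo already present in the paper's \eqref{nayla}), but as you note this is pure bookkeeping and leaves the symbol-class conclusions untouched.
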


\subsubsection{Fundamental solutions}
In the following we choose $N$ large enough for $N_k$ to be invertible within the zone ${\mathcal Z}_{\rm hyp}(N)$. 
If we denote $R_k(t,\xi)= - N_k(t,\xi)^{-1}B^{(k)}(t,\xi)$ and consider the new unknown $V_k(t,\xi) = N_k(t,\xi)^{-1} V(t,\xi)$ then
we obtain the transformed system
\begin{equation}\label{eq:2.59}
  \D_t V_k(t,\xi) = \big( D(\xi) + F_{k-1}(t,\xi) + R_k(t,\xi) \big) V_k(t,\xi)
\end{equation}
within ${\mathcal Z}_{\rm hyp}(N)$ with diagonal $F_{k-1}(t,\xi)\in \mathcal S_N^{\ell+1-k}\{0,1\}$ and with non-diagonal remainder $R_k(t,\xi)\in \mathcal S_N^{\ell-k}\{-k,k+1\}$. It remains to estimate its fundamental solution $\mathcal E_k(t,s,\xi)$. This follows along the lines of \cite{RW11}.

\begin{thm} \label{Lema6}
Assume Hypothesis \ref{Hypo6.1}. Then the fundamental solution $\mathcal{E}_k(t,s,\xi)$, $k\ge 1$, of the diagonalized system \eqref{eq:2.59}
can be represented as
\begin{align}\label{wendel}
\mathcal{E}_k(t,s,\xi) = \frac{\lambda(s)}{\lambda(t)} \mathcal{E}_0(t,s,\xi) \mathcal{Q}_k(t,s,\xi),
\end{align}
for $t\geq s$ and $(s,\xi) \in {\mathcal Z}_{\rm hyp}(N),$ where
\begin{enumerate}
 \item the function 
 \begin{equation}
 \lambda(t)= \exp \left( \frac{1}{2} \int_0^t b(\tau)\, \d\tau  \right)
 \end{equation}
  describes the main influence of the dissipation $b$,
  \item the matrix $\mathcal{E}_0(t,s,\xi)$  given by
  \begin{align} 
  \label{neve}
                        \mathcal{E}_0(t,s,\xi) =\begin{pmatrix}
                   \e^{\i(t-s)|\xi|} & 0\\
                     0 & \e^{-\i(t-s)|\xi|} \end{pmatrix}
  \end{align}
  is the fundamental solution of the free wave equation,
 \item the matrix $\mathcal{Q}_k(t,s,\xi)$  satisfies  for all multi-indices $|\alpha|\leq \min\{ k-1, \ell-k-1\}$ the symbol like estimates
    \begin{equation} \label{xiDerivQ}
      \left\|  \D_\xi^\alpha \mathcal{Q}_k(t,s,\xi)   \right\| \leq C_{\alpha} |\xi|^{-|\alpha|}
    \end{equation}
uniformly in $t\ge s\ge \theta_{|\xi|}$ and
    \begin{equation} \label{xiDerivQ2}
      \left\|  \D_\xi^\alpha \mathcal{Q}_k(t,\theta_{|\xi|},\xi)   \right\| \leq C_{\alpha} |\xi|^{-|\alpha|}
    \end{equation}
uniformly in $(t,\xi)\in {\mathcal Z}_{\rm hyp}(N)$.
\end{enumerate}
Furthermore, $\mathcal Q_k(t,s,\xi)$ is invertible and converges for $t\to\infty$ to the invertible matrix $\mathcal Q_k(\infty,s,\xi)$ locally uniform with respect to $\xi\ne0$.
\end{thm}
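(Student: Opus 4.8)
The plan is to peel off the two \emph{a priori} known large contributions in the diagonal of \eqref{eq:2.59} by explicit integrating factors and to treat what remains as a perturbation that is integrable along the hyperbolic zone. Recall that $F^{(0)}(t,\xi)=\diag B(t)=\frac{\i b(t)}{2}\mathrm I$ and that $\lambda'(t)/\lambda(t)=b(t)/2$, so that $\frac{\lambda(s)}{\lambda(t)}$ solves the scalar problem attached to $F^{(0)}$, while $\mathcal E_0(t,s,\xi)$ is by construction the fundamental solution of $\D_t\mathcal E_0=D(\xi)\mathcal E_0$. Accordingly I would \emph{define} $\mathcal Q_k$ through \eqref{wendel}, i.e.
\begin{equation*}
  \mathcal Q_k(t,s,\xi)=\frac{\lambda(t)}{\lambda(s)}\,\mathcal E_0(s,t,\xi)\,\mathcal E_k(t,s,\xi),
\end{equation*}
and check by the product rule that it solves $\D_t\mathcal Q_k=\mathcal R_k\mathcal Q_k$, $\mathcal Q_k(s,s,\xi)=\mathrm I$, with
\begin{equation*}
  \mathcal R_k(t,s,\xi)=\mathcal E_0(s,t,\xi)\,\big(F_{k-1}(t,\xi)-F^{(0)}(t,\xi)+R_k(t,\xi)\big)\,\mathcal E_0(t,s,\xi);
\end{equation*}
the differentiation of $\lambda(s)/\lambda(t)$ cancels $F^{(0)}(t)$ and the differentiation of $\mathcal E_0$ cancels $D(\xi)$, which is exactly why these two terms drop out.

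The next step is to show that $\mathcal R_k$ is integrable in time across the zone. Since $\mathcal E_0$ is unitary, $\|\mathcal R_k(t,s,\xi)\|=\|F_{k-1}(t,\xi)-F^{(0)}(t,\xi)+R_k(t,\xi)\|\lesssim|\xi|^{-1}(1+t)^{-2}+|\xi|^{-k}(1+t)^{-k-1}$ by Proposition~\ref{judô2} (the diagonal remainders $F^{(j)}$, $j\ge1$, lie in $\mathcal S_N^{\ell-j}\{-j,j+1\}$ and $R_k\in\mathcal S_N^{\ell-k}\{-k,k+1\}$). For $s\ge\theta_{|\xi|}$ one has $(1+s)|\xi|\ge N$, hence $\int_s^\infty(1+\tau)^{-j-1}\,\d\tau\lesssim(1+s)^{-j}\lesssim(|\xi|/N)^{j}$, and therefore $\int_s^\infty\|\mathcal R_k(\tau,s,\xi)\|\,\d\tau\le C$ uniformly in $s\ge\theta_{|\xi|}$ and in $\xi$. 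Consequently the Peano--Baker series for $\mathcal Q_k$ converges absolutely, $\|\mathcal Q_k(t,s,\xi)\|\le\exp\big(\int_s^t\|\mathcal R_k\|\,\d\tau\big)\le C$, the limit $\mathcal Q_k(\infty,s,\xi)$ exists because the tails $\int_T^\infty\|\mathcal R_k\|\,\d\tau$ tend to zero, locally uniformly in $\xi\ne0$, and invertibility of $\mathcal Q_k$ and of its limit follows either from the convergent series for $\mathcal Q_k^{-1}$ or from $\det\mathcal Q_k(t,s,\xi)=\exp\big(\int_s^t\mathrm{tr}\,\mathcal R_k(\tau,s,\xi)\,\d\tau\big)$, which is bounded above and below.

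For the symbol-type estimates \eqref{xiDerivQ} I would differentiate the integral equation $\mathcal Q_k=\mathrm I+\int_s^t\mathcal R_k\,\mathcal Q_k\,\d\tau$ and argue by induction on $|\alpha|$. Derivatives falling on $\mathcal Q_k$ are absorbed by a Gronwall argument using the bound from the previous paragraph; derivatives falling on $\mathcal R_k$ either hit the symbols $F^{(j)}$, $R_k$, each such derivative gaining a factor $|\xi|^{-1}$ from the symbol estimates, or hit the oscillating factors $\e^{\pm2\i(\tau-s)|\xi|}$ appearing in the off-diagonal entries of $\mathcal E_0(s,\tau,\xi)(\cdot)\mathcal E_0(\tau,s,\xi)$, each such derivative producing a factor $\tau-s$ together with harmless $|\xi|^{-1}$ from derivatives of $\xi/|\xi|$. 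The worst case is that all $|\alpha|$ derivatives produce $(\tau-s)$-factors, giving $\int_s^\infty(\tau-s)^{|\alpha|}\|R_k(\tau,\xi)\|\,\d\tau\lesssim|\xi|^{-k}\int_s^\infty(1+\tau)^{|\alpha|-k-1}\,\d\tau\lesssim|\xi|^{-k}(1+s)^{|\alpha|-k}\lesssim|\xi|^{-|\alpha|}$, where convergence of the integral needs $|\alpha|\le k-1$ and the last step uses $(1+s)|\xi|\ge N$; the diagonal terms $F^{(j)}$ carry no phase and are estimated the same way without the $(\tau-s)$ factors. This yields \eqref{xiDerivQ} uniformly in $t\ge s\ge\theta_{|\xi|}$, and specialising $s=\theta_{|\xi|}$, where $(1+s)|\xi|=N$, gives \eqref{xiDerivQ2} uniformly on ${\mathcal Z}_{\rm hyp}(N)$. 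The main obstacle is precisely this bookkeeping: one must balance the growth $(\tau-s)^{|\alpha|}$ coming from $\xi$-derivatives of the free propagator $\mathcal E_0$ against the time-decay of the remainders and the number of available $|\xi|^{-1}$-gains, and keep track of the loss of smoothness through the recursion, which is what produces the restriction $|\alpha|\le\min\{k-1,\ell-k-1\}$; the remaining routine details follow the diagonalisation scheme of \cite{RW11}.
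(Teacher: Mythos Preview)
Your argument is correct and tracks the paper's proof closely: same factorisation \eqref{wendel}, same reduced equation $\D_t\mathcal Q_k=\mathcal R_k\mathcal Q_k$, same Peano--Baker bound and Liouville determinant for invertibility and convergence. The one organisational difference appears in the derivative estimates. The paper first peels off the \emph{diagonal} contribution as an explicit scalar exponential,
\[
   \mathcal Q_k(t,s,\xi)=\exp\!\Big(\i\int_s^t\big(F_{k-1}(\tau,\xi)-F_0(\tau,\xi)\big)\,\d\tau\Big)\,\widetilde{\mathcal Q}_k(t,s,\xi),
\]
observes that this exponential is itself a uniform $\mathcal S_N\{0,0\}$-symbol, and is then left with $\widetilde{\mathcal Q}_k$ driven only by the conjugated off-diagonal remainder $R_k$; this cleanly separates the phase-free diagonal part from the phase-carrying off-diagonal part before any $\xi$-differentiation. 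Your direct approach---noting that the diagonal $F^{(j)}$ commute with $\mathcal E_0$ and hence acquire no $(\tau-s)$ factors---arrives at the same estimates with a little more bookkeeping inside the induction.

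One small point to sharpen: for \eqref{xiDerivQ2} you cannot merely ``specialise $s=\theta_{|\xi|}$'' in \eqref{xiDerivQ}, since $\theta_{|\xi|}$ depends on $\xi$. Differentiating $\mathcal Q_k(t,\theta_{|\xi|},\xi)$ in $\xi$ produces, via the chain rule, $s$-derivatives of $\mathcal Q_k$ (equivalently of $\mathcal R_k$, where $\partial_s$ hits the lower phase $\e^{\mp\i s|\xi|}$ and costs a factor $|\xi|$) multiplied by derivatives of $\theta_{|\xi|}$ satisfying $|\D_\xi^\alpha\theta_{|\xi|}|\lesssim|\xi|^{-1-|\alpha|}$. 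The paper invokes this explicitly, and it is precisely this extra $s$-differentiation that is responsible for the second half of the smoothness restriction $|\alpha|\le\ell-k-1$ in the statement.
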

\begin{proof}
The fundamental solution to the main diagonal part $\D_t - D(\xi)-F_0(t)$ is given by 
\begin{equation}
    \frac{\lambda(s)}{\lambda(t)} \mathcal E_0(t,s,\xi),
\end{equation}
therefore the matrix $\mathcal Q_k(t,s,\xi)$ in \eqref{wendel} solves 
\begin{equation}
       \D_t \mathcal Q_k(t,s,\xi) =\mathcal R_k(t,s,\xi)  \mathcal Q_k(t,s,\xi),\qquad \mathcal Q_k(s,s,\xi)=\mathrm I,
\end{equation}
for $\theta_{|\xi|}\le s, t$ and with
\begin{equation}
    \mathcal R_k(t,s,\xi) = \mathcal E_0(s,t,\xi) \big( F_{k-1}(t,\xi) - F_0(t,\xi) + R_k(t,\xi) \big) \mathcal E_0(t,s,\xi).
\end{equation}
{\sl Uniform bounds.} As symbols from $\mathcal S_N^{0}\{-1,2\}$ are uniformly integrable in $t$ within the hyperbolic zone, it follows that
\begin{equation}
\sup_{(s,\xi)\in {\mathcal Z}_{\rm hyp}(N)}  \int_{s}^\infty \|\mathcal R_k(t,s,\xi) \| \d t = C < \infty
\end{equation}
and $\mathcal Q_k(t,s,\xi)$ is uniformly bounded in $t, s\ge \theta_{|\xi|}$ as consequence of the representation by Peano--Baker formula
\begin{equation}\label{frisch}
  \mathcal Q_k(t,s,\xi) = \mathrm I + \sum_{j=1}^\infty \i^j \int_s^t \mathcal R_k(t_1,s,\xi)\int_{s}^{t_1} \cdots \int_s^{t_{j-1}} \mathcal R_k(t_j,s,\xi)\, \d t_j \cdots \d t_2 \d t_1. 
\end{equation}
Furthermore,  Liouville theorem yields
\begin{equation}
 |  \det \mathcal Q_k(t,s,\xi)| = \left|\exp\left( \i \int_s^t \mathrm{tr}\, \mathcal R_k(\tau,s,\xi) \,\d\tau \right) \right| \ge \exp(-2C) > 0
\end{equation}
and thus uniform invertibility of $\mathcal Q_k(t,s,\xi)$. The convergence for $t\to\infty$ follows from the Cauchy criterion combined with \eqref{frisch}.

{\sl Estimates for derivatives.} For estimating derivatives, we have to treat the diagonal terms and the remainder separately. Writing
\begin{equation}\label{eq:2.70}
   \mathcal Q_k(t,s,\xi) = \exp\left(\i \int_s^t (F_{k-1}(\tau,\xi)- F_0(\tau,\xi)) \,\d\tau\right) \widetilde{\mathcal Q}_k(t,s,\xi),
\end{equation}
we obtain an equation 
\begin{equation}
       \D_t \widetilde{\mathcal Q}_k(t,s,\xi) =\widetilde{\mathcal R}_k(t,s,\xi)  \widetilde{\mathcal Q}_k(t,s,\xi),\qquad \widetilde{\mathcal Q}_k(s,s,\xi)=\mathrm I,
\end{equation}
with improved integrability properties of $\widetilde{\mathcal R}_k(t,s,\xi)$. The exponential in equation \eqref{eq:2.70} is uniformly bounded and behaves 
both in $(t,\xi)$ as well as in $(s,\xi)$ as symbol from $\mathcal S_N^{\ell-k+2}\{0,0\}$ uniformly in the remaining variable. This follows from Proposition~\ref{judô2}.

The matrix $\widetilde{\mathcal R}_k(t,s,\xi)$ has symbol-like estimates for large enough $k$, where the good behaviour of the remainder 
$R_k\in \mathcal S_N^{\ell-k}\{-k,k+1\}$ allows to compensate the badly behaving derivatives of $\mathcal E_0$. This implies
(in combination with the definition of the zone)
\begin{equation}
    \| \partial_t^{\beta_1} \partial_s^{\beta_2} \D_\xi^\alpha \widetilde{\mathcal R}_k(t,s,\xi)\| \le \left(\frac1{1+t}\right)^2 |\xi|^{-1-|\alpha|+|\beta|}
\end{equation}
for $|\alpha|-|\beta|\le k-1$ and $|\beta|\le  \ell-k-1$. Combined with Peano--Baker formula and estimate \eqref{eq:theta-der} for derivatives of the zone-boundary the estimates 
  \begin{equation} 
      \left\|  \D_\xi^\alpha \widetilde{\mathcal{Q}}_k(t,s,\xi)   \right\| \leq C_{\alpha} |\xi|^{-|\alpha|},\qquad |\alpha|\le k-1,
    \end{equation}
and
    \begin{equation} 
      \left\|  \D_\xi^\alpha \widetilde{\mathcal{Q}}_k(t,\theta_{|\xi|},\xi)   \right\| \leq C_{\alpha} |\xi|^{-|\alpha|},\qquad |\alpha|\le \min\{k-1,\ell-k-1\}
    \end{equation}
follow.  They imply the desired statements.
\end{proof}
\subsubsection{Transforming back to the original problem}
After constructing the fundamental solution $\mathcal{E}_k(t,s,\xi),$ we transform back to
the original problem and get in the hyperbolic zone the representation
\begin{align} \label{Repres6}
\mathcal{E}(t,s,\xi) = \frac{\lambda(s)}{\lambda(t)} M N_k(t,\xi)  \mathcal{E}_0(t,s,\xi) \mathcal{Q}_k(t,s,\xi) N_k(s,\xi)^{-1}M^{-1},
\end{align}
with uniformly bounded symbols $N_k, N_k^{-1} \in \mathcal S_N^{\ell -k -1}\{0,0\}$, unitary $\mathcal E_0$ and $\mathcal Q_k$ of known properties. For large frequencies this is used with $s=0$, while for $|\xi|\le N$ we have to take into account the fundamental solution constructed in the dissipative zone. This leads to
\begin{multline} \label{Repres6s}
\mathcal{E}(t,0,\xi) = \frac{1}{\lambda(t)} M N_k(t,\xi)  \mathcal{E}_0(t,\theta_{|\xi|},\xi)\\
\times  \mathcal{Q}_k(t,\theta_{|\xi|},\xi) N_k(\theta_{|\xi|},\xi)^{-1} M^{-1} \lambda(\theta_{|\xi|})\mathcal{E}(\theta_{|\xi|},0,\xi),
\end{multline}
for $t \ge  \theta_{|\xi|}$.

\subsection{Collecting the estimates}

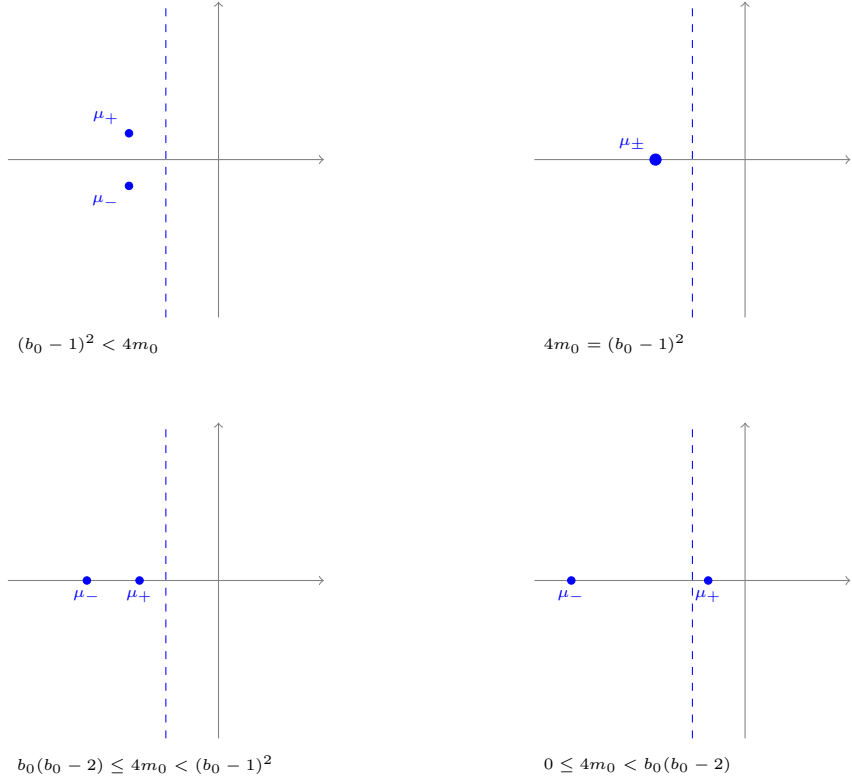
\begin{figure}
\begin{center}
\begin{tikzpicture}[scale=0.7]
\begin{scope}[xshift=10cm]
\draw[gray,->] (0,-3)--(0,3);
\draw[gray,->] (-4,0)--(2,0);
\draw[blue,dashed] (-1,-3)--(-1,3);
\filldraw[blue] (-0.7,0) circle(2pt) node [anchor = north] {\tiny $\mu_+$};
\filldraw[blue] (-3.3,0) circle(2pt) node [anchor = north] {\tiny $\mu_-$};
\draw[black] (-4,-3.5) node [anchor=west] {\tiny $0\le 4m_0<b_0(b_0-2)$};
\end{scope}
\begin{scope}
\draw[gray,->] (0,-3)--(0,3);
\draw[gray,->] (-4,0)--(2,0);
\draw[blue,dashed] (-1,-3)--(-1,3);
\filldraw[blue] (-1.5,0) circle(2pt) node [anchor = north ] {\tiny $\mu_+$};
\filldraw[blue] (-2.5,0) circle(2pt) node [anchor = north ] {\tiny $\mu_-$};
\draw[black] (-4,-3.5) node [anchor=west] {\tiny $b_0(b_0-2)\le 4m_0<(b_0-1)^2$};
\end{scope}
\begin{scope}[yshift=8cm]
\draw[gray,->] (0,-3)--(0,3);
\draw[gray,->] (-4,0)--(2,0);
\draw[blue,dashed] (-1,-3)--(-1,3);
\filldraw[blue] (-1.7,-0.5) circle(2pt) node [anchor = north east] {\tiny $\mu_-$};
\filldraw[blue] (-1.7,0.5) circle(2pt) node [anchor = south east] {\tiny $\mu_+$};
\draw[black] (-4,-3.5) node [anchor=west] {\tiny $(b_0-1)^2<4m_0$};
\end{scope}
\begin{scope}[xshift=10cm,yshift=8cm]
\draw[gray,->] (0,-3)--(0,3);
\draw[gray,->] (-4,0)--(2,0);
\draw[blue,dashed] (-1,-3)--(-1,3);
\filldraw[blue] (-1.7,0) circle(3pt) node [anchor = south east] {\tiny $\mu_\pm$};
\draw[black] (-4,-3.5) node [anchor=west] {\tiny $4m_0=(b_0-1)^2$};
\end{scope}
\end{tikzpicture}
\end{center}
\caption{Collecting the restimates}\label{fig1}
\end{figure}

We want to collect the estimates proved so far. Estimates will be obtained as combination from results of the hyperbolic zone and from the dissipative zone. One is related to the large-time behaviour of all non-zero frequencies, while the other plays a role in estimating the exceptional frequency $\xi=0$. 

\subsubsection{Estimates for the fundamental solution} 
High frequencies are described by a WKB expansion of solutions giving an overall decay estimate based on the function $\lambda(t)$. In Figure~\ref{fig1} this corresponds to the dashed line in the complex plane. The two dots correspond to the exponents $\mu_\pm$ arising from the Levinson's theorem. They are responsible for the small frequency behaviour and the interplay of the relation of these dots and the dashed line will be the major reason for the appearing different cases of final estimates. 

\begin{table}\centering
\begin{tabular}{|c|c|c|c|}
\hline
conditions on $m_0$ and $b_0$ & behaviour in ${\mathcal Z}_{\rm diss}(N)$ & behaviour in ${\mathcal Z}_{\rm hyp}(N)$ \\
\hline
\hline
$(b_0-1)^2<4m_0$ & $(t/s)^{-(b_0+1)/2}$ & $(t/s)^{-b_0/2}$ \\
\hline
$4m_0=(b_0-1)^2$ & $(t/s)^{-(b_0+1)/2+\varepsilon}$ & $(t/s)^{-b_0/2}$   \\
\hline 
$b_0(b_0-2)\le 4m_0<(b_0-1)^2$ & $(t/s)^{\mu_+}$ , $\mu_+\le -b_0/2$ & $(t/s)^{-b_0/2}$   \\
\hline
$0\le 4m_0<b_0(b_0-2)$ & $(t/s)^{\mu_+}$ , $\mu_+>-b_0/2$  & $(t/s)^{-b_0/2}$  \\
\hline
\end{tabular}
\caption{Estimates of fundamental solutions assuming (A1).}\label{table1}
\end{table}

\begin{table}\centering
\begin{tabular}{|c|c|c|c|}
\hline
conditions on $m_0$ and $b_0$ & behaviour in ${\mathcal Z}_{\rm diss}(N)$ & behaviour in ${\mathcal Z}_{\rm hyp}(N)$ \\
\hline
\hline 
$b_0(b_0-2)< 4m_0<(b_0-1)^2$ & $(t/s)^{\mu_+}$ , $\mu_+< -b_0/2$ & $(t/s)^{-b_0/2}$   \\
\hline
$0\le 4m_0<b_0(b_0-2)$ & $(t/s)^{\mu_+}$ , $\mu_+>-b_0/2$  & $(t/s)^{-b_0/2}$  \\
\hline
\end{tabular}
\caption{Estimates of fundamental solutions assuming (A2).}\label{table2}
\end{table}

The main estimates obtained so far can be seen in Tables~\ref{table1} and \ref{table2}.
We have to distinguish between the situation of condition (A1) in Hypothesis 2 and the situation of condition (A2) in Hypothesis 2. In the latter case we can only treat mass terms satisfying $4m_0< (b_0-1)^2$.

\subsubsection{Choice of parameters} The number of diagonalisation steps needed in the hyperbolic zone determines the zone constant $N$ and thus the decomposition of the phase space. When proving energy estimates it will be enough to apply one non-trivial step of diagonalisation in the hyperbolic zone
and for this any choice of $N$ will be good. When proving dispersive estimates several such steps are necessary and $N$ has to be chosen large enough. 

The number $\ell$ of derivatives required in Hypothesis~\ref{Hypo6.1} depends on the number of diagonalisation steps to be used and the needed symbol properties of the matrix function $\mathcal Q_k(t,\theta_{|\xi|},\xi)$. When proving energy estimates, $\ell=1$ is sufficient.


\section{Estimates}\label{sec3}

\subsection{Energy estimates and their sharpness}

The representation of solutions obtained so far allow us to conclude estimates for the solution
and its derivatives. This section is devoted to the study of
estimates, which are directly related to our micro-energy.
\begin{thm} \label{Nestea6}
Assume Hypothesis \ref{Hypo6.1} with $\ell=1$, Hypothesis \ref{Hypo6.2} with $\sigma=1$ and $b_0(b_0-2)\le 4m_0$. Then the $L^2-L^2$ estimate
\begin{equation}
   \| (1+t)^{-1} u(t,\cdot)\|_{L^2} + \| \nabla u(t,\cdot)\|_{L^2} + \| u_t(t,\cdot)\|_{L^2} \lesssim \frac1{\lambda(t)} \big( \|u_0\|_{H^1} + \|u_1\|_{L^2}\big)
\end{equation}
holds true for any solution $u$ of \eqref{KleinGordonDamping} to initial data $u_0\in H^1(\R^n)$ and $u_1\in L^2(\R^n)$.
\end{thm}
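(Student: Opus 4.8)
The plan is to piece together the fundamental solution estimates collected in the previous subsection. The micro-energy $U(t,\xi) = (h(t,\xi)\widehat u, \D_t \widehat u)^T$ satisfies $\|U(t,\xi)\| \asymp (1+|\xi|)|\widehat u(t,\xi)| + |\widehat u_t(t,\xi)|$ uniformly in the zones, since $h(t,\xi)\asymp \min\{N/(1+t),|\xi|\} + |\xi|\cdot\mathbf 1_{|\xi|\ge N}$; in particular $(1+t)^{-1}|\widehat u| \lesssim \|U\|$ and $|\xi||\widehat u| + |\widehat u_t| \lesssim \|U\|$. By Plancherel it therefore suffices to show $\|U(t,\xi)\| \lesssim \lambda(t)^{-1}\,\|U(0,\xi)\|$ uniformly in $\xi$, i.e.\ $\|\mathcal E(t,0,\xi)\| \lesssim \lambda(t)^{-1}$ uniformly in $\xi\in\R^n$, where $\mathcal E$ is the global fundamental solution of the first-order system. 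Indeed $\|U(0,\xi)\|\lesssim (1+|\xi|)|\widehat u_0(\xi)| + |\widehat u_1(\xi)|$, whose $L^2$-norm is bounded by $\|u_0\|_{H^1}+\|u_1\|_{L^2}$.

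Next I would split according to the zones. For high frequencies $|\xi|\ge N$ (zone ${\mathcal Z}_{\rm hyp}^\ell$) we use representation \eqref{Repres6} with $s=0$ and $k=1$: since $M$, $N_1$, $N_1^{-1}$, $\mathcal E_0$ and $\mathcal Q_1$ are all uniformly bounded matrices (Proposition~\ref{judô2} and Theorem~\ref{Lema6} with $\ell=1$, $k=1$), we get $\|\mathcal E(t,0,\xi)\| \lesssim \lambda(t)^{-1}$, using also $\lambda(0)=1$. For small frequencies $|\xi|\le N$ and times $t\le\theta_{|\xi|}$ we are in ${\mathcal Z}_{\rm diss}(N)$: here the condition $b_0(b_0-2)\le 4m_0$ is exactly \eqref{eq:cond-smallmu} (up to the boundary case, which is handled since $\mu_+<-b_0/2$ strictly when $b_0(b_0-2)<4m_0$ and the borderline $2\Re\mu_+=-b_0$ gives the estimate directly), so Remark~\ref{rota98} gives $\|\mathcal E(t,0,\xi)\|\lesssim \lambda(0)/\lambda(t)=\lambda(t)^{-1}$. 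Note Theorem~\ref{WanWir} requires \eqref{eq:cond-diffmu}; the exceptional case $4m_0=(b_0-1)^2$ — which still satisfies $b_0(b_0-2)\le 4m_0$ — needs a separate remark, namely that when the eigenvalue is double one still obtains $\|\mathcal E(t,s,\xi)\|\lesssim ((1+t)/(1+s))^{\Re\mu_+}(1+\ln\tfrac{1+t}{1+s})$ from the Jordan-block form of $A$, and the extra logarithm is absorbed into $\lambda(t)^{-1}$ exactly as in Remark~\ref{rota98} whenever $\Re\mu_+ = -(b_0+1)/2 < -b_0/2$, which holds here.

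Finally, for small frequencies $|\xi|\le N$ and large times $t\ge\theta_{|\xi|}$ we chain the two regimes via \eqref{Repres6s}: the dissipative part contributes the factor $\lambda(\theta_{|\xi|})\mathcal E(\theta_{|\xi|},0,\xi)$, which by Lemma~\ref{PeqLpLqIm} (with $\alpha=0$) is uniformly bounded, while the hyperbolic factor $\lambda(t)^{-1} MN_1(t,\xi)\mathcal E_0(t,\theta_{|\xi|},\xi)\mathcal Q_1(t,\theta_{|\xi|},\xi)N_1(\theta_{|\xi|},\xi)^{-1}M^{-1}$ is bounded by $\lambda(t)^{-1}$ times a uniformly bounded matrix, using \eqref{xiDerivQ2} with $\alpha=0$ and the boundedness of $N_1,N_1^{-1}$. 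Combining the three cases gives $\|\mathcal E(t,0,\xi)\|\lesssim\lambda(t)^{-1}$ uniformly in $\xi$, and the theorem follows by Plancherel. The main obstacle is bookkeeping the double-eigenvalue case $4m_0=(b_0-1)^2$, where Theorem~\ref{WanWir} as stated does not apply and one must invoke the Jordan-form variant of Levinson's theorem and check that the resulting logarithmic factor is harmless; everything else is a routine concatenation of the estimates already established.
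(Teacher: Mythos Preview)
Your proposal is correct and follows essentially the same route as the paper: bound $\|\mathcal E(t,0,\xi)\|\lesssim\lambda(t)^{-1}$ by combining the dissipative-zone estimate (Theorem~\ref{WanWir} plus Remark~\ref{rota98}) with the hyperbolic-zone representation \eqref{Repres6}/\eqref{Repres6s} for $k=1$, then transfer to the energy via Plancherel and the relations $(1+t)^{-1}\lesssim h(t,\xi)$, $|\xi|\lesssim h(t,\xi)$. You are in fact more careful than the paper's own proof on one point: you flag the double-eigenvalue case $4m_0=(b_0-1)^2$, where Theorem~\ref{WanWir} as stated does not apply; the paper's proof invokes Theorem~\ref{WanWir} without comment, relying implicitly on the $\varepsilon$-loss entry in Table~\ref{table1}. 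Your Jordan-block fix (yielding an extra logarithm absorbed by the gap $\Re\mu_+=-(b_0+1)/2<-b_0/2$) is the right idea and matches what Table~\ref{table1} records; the only thing to add is that this requires the Jordan-form variant of Levinson's theorem, which is standard but not contained in the paper's Appendix~\ref{secA}.
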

\begin{proof}
We first recall the stronger statement
\begin{align}
    \| \mathcal{E}(t,s,\xi)  \| \lesssim \frac{\lambda(s)}{\lambda(t)}
\end{align}
for the fundamental solution $\mathcal E(t,s,\xi)$ constructed in Section~\ref{sec2}. We start by considering the dissipative zone. Here
Theorem~\ref{WanWir} in combination with Remark~\ref{rota98} yields
\begin{align}
    \|\mathcal{E}(t,s,\xi)\| \lesssim \left(\frac{1+t}{1+s}\right)^{\mu_+} \lesssim \frac{\lambda(s)}{\lambda(t)}
\end{align}
uniform with respect to $0\le s\le t\le \theta_{|\xi|}$.

Next we consider the hyperbolic zone and apply Proposition~\ref{judô2} and Theorem~\ref{Lema6} with $k=1$. 
Hence, with uniformly bounded matrices $N_1(t,\xi)$, $N_1(t,\xi)^{-1}$ and $\mathcal{Q}_1(t,s,\xi)$ and the unitary matrix $\mathcal E_0(t,s,\xi)$
we obtain from \eqref{Repres6} 
\begin{equation}
   \mathcal E(t,s,\xi) = \frac{\lambda(s)}{\lambda(t)} M N_1(t,\xi) \mathcal E_0(t,s,\xi) \mathcal Q_1(t,s,\xi) N_1(s,\xi)^{-1} M^{-1}
\end{equation}
and thus the desired norm bound.

Now we show how this implies the desired energy estimate. We first observe that the definition of the micro-energy implies
\begin{equation}
     \|  U(0,\cdot) \|_{L^2} \approx \| u_0 \|_{H^1} + \|u_1\|_{L^2}
\end{equation}
as equivalence of norms (with constants depending on the zone constant $N$). Furthermore,
\begin{equation}
    |\xi|\widehat u(\xi) = \frac{|\xi|}{h(t,\xi)} h(t,\xi) \widehat u(\xi) 
\end{equation}
with $|\xi| / h(t,\xi)$ uniformly bounded by $1$. Similarly
\begin{equation}
    \frac1{1+t} \widehat u(\xi) = \frac{1}{(1+t)h(t,\xi)} h(t,\xi) \widehat u(\xi) 
\end{equation}
and again by the definition of the zone $(1+t)h(t,\xi)\ge N$. 
Therefore, from $U(t,\xi) = \mathcal E(t,0,\xi)U(0,\xi)$ we conclude the desired estimates
\begin{equation}
   \|u_t(t,\cdot)\|_{L^2} \le \| U(t,\cdot)\|_{L^2} \le \frac1{\lambda(t)} \|U(0,\cdot)\|_{L^2}  \lesssim  \frac1{\lambda(t)} \big( \|u_0\|_{H^1} + \|u_1\|_{L^2}\big)
\end{equation}
as well as
\begin{align}
   \| \nabla u(t,\cdot)\|_{L^2}  = \||\xi| \widehat u(t,\cdot)\|_{L^2} \le \| U(t,\cdot)\|_{L^2} & \lesssim  \frac1{\lambda(t)} \big( \|u_0\|_{H^1} + \|u_1\|_{L^2}\big) \\
   \| (1+t)^{-1} u(t,\cdot)\|_{L^2} \le \| U(t,\cdot)\|_{L^2}& \lesssim  \frac1{\lambda(t)} \big( \|u_0\|_{H^1} + \|u_1\|_{L^2}\big)
\end{align}
by the aid of Plancherel's theorem. This completes the proof.
 \end{proof}
 
 If we assume Hypothesis \ref{Hypo6.2} with $\sigma>1$ we have to restrict the admissible values of $m_0$ further. The proof goes in analogy to the above one replacing Theorem~\ref{WanWir} by Theorem~\ref{WanWir2} for the treatment of the dissipative zone.
 
 \begin{thm} \label{Nestea6a}
Assume Hypothesis \ref{Hypo6.1} with $\ell=1$, Hypothesis \ref{Hypo6.2} with $\sigma\in(1,2]$ and $b_0(b_0-2)\le 4m_0<(b_0-1)^2$. Then the $L^2-L^2$ estimate
\begin{equation}
   \| (1+t)^{-1} u(t,\cdot)\|_{L^2} + \| \nabla u(t,\cdot)\|_{L^2} + \| u_t(t,\cdot)\|_{L^2} \lesssim \frac1{\lambda(t)} \big( \|u_0\|_{H^1} + \|u_1\|_{L^2}\big)
\end{equation}
holds true for any solution $u$ of \eqref{KleinGordonDamping} to initial data $u_0\in H^1(\R^n)$ and $u_1\in L^2(\R^n)$.
\end{thm}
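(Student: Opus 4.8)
The plan is to mimic the proof of Theorem~\ref{Nestea6} line by line, changing only the treatment of the dissipative zone. The structural ingredients --- the equivalence $\|U(0,\cdot)\|_{L^2}\approx\|u_0\|_{H^1}+\|u_1\|_{L^2}$, the bounds $|\xi|/h(t,\xi)\le 1$ and $(1+t)h(t,\xi)\ge N$, the hyperbolic representation \eqref{Repres6} with $k=1$ from Proposition~\ref{judô2} and Theorem~\ref{Lema6}, and the final application of Plancherel's theorem --- are all insensitive to whether we assume (A1) or (A2), so they carry over verbatim. The only thing that changes is which theorem delivers the decay estimate $\|\mathcal E(t,s,\xi)\|\lesssim\lambda(s)/\lambda(t)$ in ${\mathcal Z}_{\rm diss}(N)$.

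First I would reduce to establishing $\|\mathcal E(t,s,\xi)\|\lesssim\lambda(s)/\lambda(t)$ uniformly in $0\le s\le t$ and $(t,\xi)\in[0,\infty)\times\R^n$, which is the same intermediate claim used in Theorem~\ref{Nestea6}. In the hyperbolic zone ${\mathcal Z}_{\rm hyp}(N)$ nothing changes: Theorem~\ref{Lema6} with $k=1$ gives $\mathcal E(t,s,\xi)=\tfrac{\lambda(s)}{\lambda(t)}MN_1(t,\xi)\mathcal E_0(t,s,\xi)\mathcal Q_1(t,s,\xi)N_1(s,\xi)^{-1}M^{-1}$ with all factors uniformly bounded, yielding the bound. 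In the dissipative zone I would invoke Theorem~\ref{WanWir2} (instead of Theorem~\ref{WanWir}): under Hypothesis~\ref{Hypo6.2} with $\sigma\in(1,2]$ together with \eqref{eq:cond-realmu}, which is guaranteed here by the hypothesis $b_0(b_0-2)\le 4m_0<(b_0-1)^2$, we get
\begin{equation*}
\|\mathcal E(t,s,\xi)\|\lesssim\left(\frac{1+t}{1+s}\right)^{\mu_+}\exp\left(C\left(\ln\frac{1+t}{1+s}\right)^{1/\sigma'}\right).
\end{equation*}
Since $b_0(b_0-2)\le 4m_0$ is exactly condition \eqref{eq:cond-smallmu}, Remark~\ref{rota98} converts this into $\|\mathcal E(t,s,\xi)\|\lesssim\lambda(s)/\lambda(t)$ uniformly on ${\mathcal Z}_{\rm diss}(N)$, i.e.\ for $0\le s\le t\le\theta_{|\xi|}$.

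Once the uniform fundamental-solution bound is in hand for both zones, the remainder of the argument is identical to the proof of Theorem~\ref{Nestea6}: from $U(t,\xi)=\mathcal E(t,0,\xi)U(0,\xi)$ and the pointwise bounds on $|\xi|/h(t,\xi)$ and $1/((1+t)h(t,\xi))$ one reads off the $L^2$ bounds on $u_t$, $\nabla u$ and $(1+t)^{-1}u$ via Plancherel. There is no real obstacle here; the one point requiring a moment's care is verifying that the hypothesis $b_0(b_0-2)\le 4m_0<(b_0-1)^2$ simultaneously feeds \eqref{eq:cond-realmu} (needed for Theorem~\ref{WanWir2}, which demands $\Re\mu_+\ne\Re\mu_-$, equivalently both eigenvalues real and distinct) and \eqref{eq:cond-smallmu} (needed for Remark~\ref{rota98} to give the $\lambda$-decay) --- the strict inequality $4m_0<(b_0-1)^2$ being the stronger of the two requirements in (A2) compared to the $4m_0\ne(b_0-1)^2$ that sufficed under (A1). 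With that observation the proof is complete.
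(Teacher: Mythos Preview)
Your proposal is correct and follows the paper's approach exactly: the paper's own proof of this theorem is the single sentence ``The proof goes in analogy to the above one replacing Theorem~\ref{WanWir} by Theorem~\ref{WanWir2} for the treatment of the dissipative zone,'' and you have carried that out in detail. Your observation that the strict inequality $4m_0<(b_0-1)^2$ is precisely what is needed to feed the hypothesis \eqref{eq:cond-realmu} of Theorem~\ref{WanWir2} (real distinct eigenvalues), while the lower bound $b_0(b_0-2)\le 4m_0$ feeds Remark~\ref{rota98}, is the right explanation for why the admissible range of $m_0$ shrinks when passing from (A1) to (A2).
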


We will give some examples to show the applicability of the previous results.
\begin{exam} \label{Exam6.1}
Let us consider for $b_0,m_0\in\R$
\begin{align}
b(t) &= \frac{b_0}{1+t} + \frac{h_1(t)}{1+t}, \\
m(t) &= \frac{m_0}{(1+t)^2} +\frac{h_2(t)}{(1+t)^2}
\end{align}
with uniformly bounded $h_j(t)$, $j=1,2$ and uniformly bounded $t\partial_t h_j(t)$ and with the integrability condition
\begin{equation}\label{eq:61-1}
  \int_0^\infty |h_j(t)| \frac{\d t}{1+t} < \infty,\qquad  j=1,2.
\end{equation}
Then Hypothesis \ref{Hypo6.1} is satisfied with $\ell=1$ and Hypothesis \ref{Hypo6.2} is satisfied with $\sigma=1$.
If we further suppose that $b_0(b_0-2) \le 4m_0$ then the energy estimate
\begin{align}\label{eq:en-est-61}
\| \left( (1+t)^{-1} u(t,\cdot),   u_t(t,\cdot), \nabla_x u(t,\cdot)  \right)  \|_{L^2} \lesssim (1+t)^{-\frac{b_0}{2}}\left( \| u_0 \|_{H^1} + \| u_1 \|_{L^2}\right)
\end{align}
holds true. The decay is independent of $m_0$ and related to the decay for non-effective wave damped models treated in \cite{Wirth1}.
\end{exam}
\begin{exam}\label{Exam6.1a}
We consider the same situation as in the previous example, but replace \eqref{eq:61-1} by
\begin{equation}\label{eq:61-2}
  \int_0^\infty |h_j(t)|^\sigma \frac{\d t}{1+t} < \infty,\qquad  j=1,2,
\end{equation}
then under the more restrictive condition  $b_0(b_0-2) \le  4m_0< (b_0-1)^2$ on the numbers $m_0$ and $b_0$ the {\em same} estimate
\eqref{eq:en-est-61} holds true. To be more specific, this allows to treat
\begin{align}
b(t) &= \frac{b_0}{1+t} + \frac{b_1}{(\e+t)(\ln(\e+t))^\gamma},\\
m(t) &= \frac{m_0}{(1+t)^2} + \frac{m_1}{(\e+t)^2(\ln(\e+t))^\gamma}
\end{align}
with arbitrary $b_1$, $m_1$ and $\gamma\in(1/2,1]$. It satisfies \eqref{eq:61-2} with $\sigma\in (\gamma^{-1},2]$. 
\end{exam}
\begin{rem}
The Examples \ref{Exam6.1} and \ref{Exam6.1a} show us that  small mass terms have no influence on our estimates of the energy of solutions to 
the Cauchy problem \eqref{KleinGordonDamping}.
\end{rem}

The energy estimates of the above theorems are sharp. For this we refer to Theorem~\ref{Lema6} and the uniform invertibility of 
$\mathcal Q_1(t,s,\xi)$ within the hyperbolic zone. We give only a preliminary statement, but it implies the sharpness.

\begin{lem}
Assume the initial data $u_0,u_1\in\mathcal S(\R^n)\setminus\{0\}$ are Schwartz functions such that $\supp \widehat u_0$ and $\supp \widehat u_1$ are compact and
contained in the set $\{ \xi : |\xi|>N\}$. Then the limit
\begin{equation}
   \lim_{t\to\infty} \lambda(t)^2 \big( \| \nabla u(t,\cdot)\|_{L^2}^2 +  \| u_t (t,\cdot) \|_{L^2}^2\big) > 0
\end{equation} 
exists and is different from zero.
\end{lem}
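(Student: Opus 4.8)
The plan is to exploit the representation \eqref{Repres6} of the fundamental solution with $s=0$, which for frequencies $|\xi|>N$ (i.e.\ in the purely hyperbolic zone $\mathcal Z_{\rm hyp}^\ell(N)$, where $\theta_{|\xi|}=0$) reads
\begin{equation*}
   \mathcal E(t,0,\xi) = \frac{1}{\lambda(t)} M N_k(t,\xi)\, \mathcal E_0(t,0,\xi)\, \mathcal Q_k(t,0,\xi)\, N_k(0,\xi)^{-1} M^{-1}.
\end{equation*}
Multiplying by $\lambda(t)$ removes the only unbounded-in-$t$ factor, and the remaining matrices converge as $t\to\infty$: $N_k(t,\xi)\to N_k(\infty,\xi)$ since $N_k=\mathrm I+\sum N^{(j)}$ with each $N^{(j)}\in\mathcal S_N^{\ell-j+1}\{-j,j\}$ decaying like a power of $(1+t)^{-1}$ uniformly on the support of the data, and $\mathcal Q_k(t,0,\xi)\to\mathcal Q_k(\infty,0,\xi)$ by the final assertion of Theorem~\ref{Lema6}. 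The oscillatory factor $\mathcal E_0(t,0,\xi)=\diag(\e^{\i t|\xi|},\e^{-\i t|\xi|})$ does not converge, but it is unitary, so it will be handled by an explicit computation rather than by passing to a limit.

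\textbf{Key steps.} First, I would fix $k=1$ (one nontrivial diagonalisation step suffices, so $\ell=1$ is enough) and write $\lambda(t)\,\widehat u$-data vector $U(t,\xi)=\mathcal E(t,0,\xi)U(0,\xi)$, so that by Plancherel
\begin{equation*}
  \lambda(t)^2\big(\|\nabla u(t,\cdot)\|_{L^2}^2+\|u_t(t,\cdot)\|_{L^2}^2\big)
  = \int_{|\xi|>N} \big\| \lambda(t)\mathcal E(t,0,\xi)U(0,\xi)\big\|^2\,\d\xi + (\text{error}),
\end{equation*}
where the error comes from replacing the micro-energy components $h(t,\xi)\widehat u=|\xi|\widehat u$ and $\D_t\widehat u$ by $|\xi|\widehat u$ and $\widehat u_t$; on $|\xi|>N$ one has $h(t,\xi)=|\xi|$ exactly, so there is no discrepancy for the gradient, and $\D_t\widehat u=-\i\widehat u_t$ gives exactly the time-derivative term. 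Second, substitute the representation above: the integrand equals $\big\|MN_1(t,\xi)\mathcal E_0(t,0,\xi)\mathcal Q_1(t,0,\xi)N_1(0,\xi)^{-1}M^{-1}U(0,\xi)\big\|^2$. Third, since $N_1(t,\xi)\to\mathrm I$ (its correction $N^{(1)}=\frac{\i b(t)}{4|\xi|}\left(\begin{smallmatrix}0&1\\-1&0\end{smallmatrix}\right)$ vanishes as $t\to\infty$) and $\mathcal Q_1(t,0,\xi)\to\mathcal Q_1(\infty,0,\xi)$, uniformly on the compact frequency support bounded away from $0$, I would show the integrand converges pointwise to $\big\|M\mathcal E_0(t,0,\xi)\mathcal Q_1(\infty,0,\xi)N_1(0,\xi)^{-1}M^{-1}U(0,\xi)\big\|^2$ — but this still contains $\mathcal E_0$. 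To get an honest limit, I would use that $M$ and $M^{-1}$ are real orthogonal and that $\mathcal E_0$ is unitary and diagonal: conjugating, the quantity is $\|\mathcal E_0(t,0,\xi)\,w(\xi)+o(1)\|^2$ with $w(\xi)=\mathcal Q_1(\infty,0,\xi)N_1(0,\xi)^{-1}M^{-1}U(0,\xi)$, and $\|\mathcal E_0(t,0,\xi)w(\xi)\|^2=|w_1(\xi)|^2+|w_2(\xi)|^2=\|w(\xi)\|^2$ is \emph{independent of $t$}. Hence the integrand converges to $\|w(\xi)\|^2$ for a.e.\ $\xi$, and by the uniform bounds from Theorem~\ref{Lema6} and Proposition~\ref{judô2} it is dominated (the data have compact frequency support), so dominated convergence gives
\begin{equation*}
  \lim_{t\to\infty}\lambda(t)^2\big(\|\nabla u(t,\cdot)\|_{L^2}^2+\|u_t(t,\cdot)\|_{L^2}^2\big) = \int_{|\xi|>N}\big\| \mathcal Q_1(\infty,0,\xi)N_1(0,\xi)^{-1}M^{-1}U(0,\xi)\big\|^2\,\d\xi.
\end{equation*}
Fourth, positivity: $\mathcal Q_1(\infty,0,\xi)$ is invertible (Theorem~\ref{Lema6}), $N_1(0,\xi)$ and $M$ are invertible, and $U(0,\xi)=(|\xi|\widehat u_0(\xi),-\i\widehat u_1(\xi))^{\!T}$ is not identically zero on $\{|\xi|>N\}$ since $u_0,u_1\ne0$ with frequency support there; therefore the integrand is nonnegative and strictly positive on a set of positive measure, so the integral is $>0$.

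\textbf{Main obstacle.} The delicate point is justifying the interchange of limit and integral uniformly enough that the oscillatory factor $\mathcal E_0(t,0,\xi)$ causes no trouble — it does not converge, so one cannot naively pass to the limit inside the norm; the resolution is the observation above that $\|\mathcal E_0 w\|=\|w\|$ makes the $t$-dependence disappear at the level of the norm \emph{before} taking the limit, after which the genuinely convergent factors $N_1(t,\xi)$ and $\mathcal Q_1(t,0,\xi)$ can be dealt with by dominated convergence on the compact, zero-avoiding frequency support. A secondary point worth a line is that the convergence $\mathcal Q_1(t,0,\xi)\to\mathcal Q_1(\infty,0,\xi)$ from Theorem~\ref{Lema6} is only asserted locally uniformly for $\xi\ne0$, which is exactly what the compact support of $\widehat u_0,\widehat u_1$ in $\{|\xi|>N\}$ supplies.
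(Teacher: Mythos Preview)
Your proposal is correct and follows essentially the same route as the paper's proof: both use the representation \eqref{Repres6} with $k=1$, strip off the orthogonal $M$ and the unitary $\mathcal E_0$ from the norm, use $N_1(t,\xi)\to\mathrm I$ and $\mathcal Q_1(t,0,\xi)\to\mathcal Q_1(\infty,0,\xi)$ on the compact frequency support, and appeal to invertibility of $\mathcal Q_1(\infty,0,\xi)$ for strict positivity. The paper carries out the pointwise limit $\lambda(t)\|U(t,\xi)\|\to\|\mathcal Q_1(\infty,0,\xi)N_1(0,\xi)^{-1}M^{-1}U(0,\xi)\|$ first and then integrates, whereas you phrase the same step via dominated convergence on the integral; this is a difference in presentation, not in substance.
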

\begin{proof}
As the Fourier support is preserved under evolution, we conclude from Plancherel identity that $\|U(t,\cdot)\|_{L^2}^2 = \| \nabla u(t,\cdot)\|_{L^2}^2 + \|u_t(t,\cdot)\|_{L^2}^2$.
Therefore,  by  Theorem~\ref{Lema6} 
 \begin{equation}
    \lambda(t) \| U(t,\xi)\| =  \| N_1(t,\xi) \mathcal E_0(t,0,\xi) \mathcal Q_1(t,0,\xi) N_1(0,\xi)^{-1} M^{-1} U(0,\xi)\|
 \end{equation}
 and $\| N_1(t,\xi)  - I \| \lesssim (1+t)^{-1}$ uniformly in $\xi\in\supp U(0,\cdot)$. This implies, using the fact that
 $\mathcal E_0$ is unitary,
  \begin{equation}
    \lambda(t) \| U(t,\xi)\| =  \| \mathcal Q_1(t,0,\xi) N_1(0,\xi)^{-1} M^{-1} U(0,\xi)\| + C (1+t)^{-1}. 
 \end{equation}
 The constant $C$ depends on the solution. The first term tends to a limit and thus
   \begin{equation}
  \lim_{t\to\infty}  \lambda(t) \| U(t,\xi)\| =  \| \mathcal Q_1(\infty,0,\xi) N_1(0,\xi) ^{-1} M^{-1} U(0,\xi)\| 
  	 \ge c^{-1} \|U(0,\cdot)\| ,
 \end{equation}
 with $c=\sup_{|\xi|>N} \|M N_1(0,\xi)\mathcal Q_1(\infty,0,\xi)^{-1}\|<\infty$.
 Integrating the square of this inequality with respect to $\xi$ proves the statement.
\end{proof}

\subsection{Large dissipation and improvements by moment conditions} If the dissipation term is large, $4m_0 < b_0(b_0-2)$ then the exponent 
$\mu_+$ from the dissipative zone is larger than $-b_0/2$. Therefore, small frequencies yield slower decay than the hyperbolic zone. This phenomenon characterises the transition from non-effective to effective dissipation. 

Assuming conditions on the initial data we can control this behaviour of small frequencies and still obtain the same decay rates as before. This is obvious, if
$0\not\in\supp U(0,\cdot)$. We want to do better and use moment conditions. We denote
by $L^1_\kappa(\R^n)$ the weighted Lebesgue space
\begin{equation}
  L^1_\kappa(\R^n) = \{ f  \;:\;  \int |f(x)|  \, (1+x^2)^{\kappa/2} \d x <\infty \}
\end{equation}
and assume data have finite energy and belong to such a space.

\begin{thm}\label{thm:33}
Assume Hypothesis \ref{Hypo6.1} with $\ell=1$, Hypothesis \ref{Hypo6.2} with $\sigma\ge 1$ and $4m_0<b_0(b_0-2)$. 
Let further 
\begin{align}\label{eq:kappa}
2\kappa &= 1+\sqrt{(b_0-1)^2-4m_0} \qquad (\sigma=1),\\
2\kappa &> 1+\sqrt{(b_0-1)^2-4m_0} \qquad (\sigma>1).
\end{align}
If  the initial data 
belong to $u_0\in H^1(\R^n)\cap L^1_{\kappa'}(\R^n)$ and $u_1\in L^2(\R^n)\cap L^1_{\kappa'}(\R^n)$ with $[\kappa']>\kappa-\frac n2$
and satisfy the moment conditions
\begin{equation}
   \int x^\alpha u_0(x) \d x = \int x^\alpha u_1(x) \d x = 0
\end{equation}
for all multi-indices $|\alpha|\le \kappa'$ then the  solution satisfies
\begin{equation}\label{eq:3.27}
   \| (1+t)^{-1} u(t,\cdot)\|_{L^2} + \| \nabla u(t,\cdot)\|_{L^2} + \| u_t(t,\cdot)\|_{L^2} \lesssim \frac1{\lambda(t)}.
\end{equation}
\end{thm}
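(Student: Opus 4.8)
The statement asks us to improve the small-frequency decay in the regime $4m_0 < b_0(b_0-2)$ — where the exponent $\mu_+$ from the dissipative zone exceeds $-b_0/2$ and thus spoils the $\lambda(t)^{-1}$ rate — by imposing vanishing moments on the data. The plan is to split $\widehat u_0,\widehat u_1$ (or rather the micro-energy $U(0,\xi)$) into the three zones via the cut-offs $\varphi_{\rm diss},\varphi^s_{\rm hyp},\varphi^\ell_{\rm hyp}$ and to treat them separately. For the large-frequency part $\varphi^\ell_{\rm hyp}(\xi)U(0,\xi)$ and the small-frequency hyperbolic contribution one uses directly the representations \eqref{Repres6} and \eqref{Repres6s} together with Theorem~\ref{Lema6}: these already give the desired $\lambda(t)^{-1}$ bound with no moment conditions, exactly as in the proofs of Theorems~\ref{Nestea6} and \ref{Nestea6a}. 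So the whole point is the dissipative-zone part $\varphi_{\rm diss}(t,\xi)U(0,\xi)$, where $\mathcal E(t,0,\xi)$ only obeys $\|\mathcal E(t,0,\xi)\|\lesssim (1+t)^{\mu_+}$ with $\mu_+>-b_0/2$.

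First I would quantify the gain coming from the moment conditions. Vanishing of $\int x^\alpha u_j(x)\,\d x$ for $|\alpha|\le\kappa'$ together with $u_j\in L^1_{\kappa'}$ means $\widehat u_j$ vanishes to order $\kappa'+1$ at $\xi=0$, more precisely $|\widehat u_j(\xi)|\lesssim |\xi|^{[\kappa']+1}$ near the origin, and likewise $U(0,\xi)=O(|\xi|^{[\kappa']+1})$ there (note $h(0,\xi)=N\varphi_{\rm diss}+\dots$ is bounded, so no loss). Combining $\|\mathcal E(t,0,\xi)U(0,\xi)\|\lesssim (1+t)^{\mu_+}|\xi|^{[\kappa']+1}$ on ${\mathcal Z}_{\rm diss}(N)$ with the zone relation $(1+t)|\xi|\le N$, i.e. $|\xi|\le N(1+t)^{-1}$, gives on this zone
\begin{equation}
   \|\mathcal E(t,0,\xi)U(0,\xi)\| \lesssim (1+t)^{\mu_+-[\kappa']-1}.
\end{equation}
The condition $[\kappa']>\kappa-\tfrac n2$ with $2\kappa=1+\sqrt{(b_0-1)^2-4m_0}$ is precisely $-b_0/2 = \mu_+ - \kappa \ge \mu_+-[\kappa']-\tfrac n2 - \varepsilon$ for suitable $\varepsilon$ (and strictly so when $\sigma>1$, absorbing the sub-polynomial factor from Theorem~\ref{WanWir2} via Remark~\ref{rota98}), which is exactly what is needed once the $\xi$-integration over a ball of radius $N(1+t)^{-1}$ is carried out: that integration contributes an extra factor $(1+t)^{-n/2}$ in $L^2$.

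Concretely, to pass to the $L^2$ statement I would write, for the dissipative-zone piece,
\begin{equation}
   \big\| \varphi_{\rm diss}(t,\cdot)\,\mathcal E(t,0,\cdot)U(0,\cdot)\big\|_{L^2_\xi}^2 \lesssim \int_{|\xi|\le N(1+t)^{-1}} (1+t)^{2\mu_+}|\xi|^{2([\kappa']+1)}\,\d\xi \lesssim (1+t)^{2\mu_+ - 2[\kappa']-2 - n},
\end{equation}
so the exponent is $\mu_+-[\kappa']-1-\tfrac n2$, which by the hypothesis on $[\kappa']$ is $\le -b_0/2$ (strictly in the $\sigma>1$ case, leaving room for the $\exp(C(\ln\frac{1+t}{1+s})^{1/\sigma'})$ correction). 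Since $h(t,\xi)\ge N(1+t)^{-1}$ and $h(t,\xi)\ge|\xi|$ on the zone, the bounds on $(1+t)^{-1}u$, $\nabla u$ and $u_t$ all follow from the single bound on $\|U(t,\cdot)\|_{L^2}$ exactly as in Theorem~\ref{Nestea6}. Adding the three zone contributions and using Plancherel yields \eqref{eq:3.27}.

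The main obstacle is bookkeeping rather than anything deep: one must check that differentiating the moment-vanishing estimate for $\widehat u_j$ interacts correctly with the homogeneity of $U(0,\xi)$ (the $L^1_{\kappa'}$ assumption with $[\kappa']$ an integer is what lets one Taylor-expand $\widehat u_j$ at $0$ and bound the remainder), and that the small sub-polynomial excess in Theorem~\ref{WanWir2} is absorbed by the strict inequality $2\kappa>1+\sqrt{(b_0-1)^2-4m_0}$ in the $\sigma>1$ case. One also has to confirm that the threshold $2\kappa=1+\sqrt{(b_0-1)^2-4m_0}$ is exactly $\mu_+ + \tfrac{b_0}{2} + (\text{shift})$: indeed $\mu_+ = -\tfrac{b_0+1}{2}+\tfrac12\sqrt{(b_0-1)^2-4m_0}$, so $\mu_+ + \tfrac{b_0}{2} = -\tfrac12 + \tfrac12\sqrt{(b_0-1)^2-4m_0} = \kappa-1$, confirming that $[\kappa']+1 > \kappa - \tfrac n2 + \tfrac n2$ is the sharp requirement to recover the rate $\lambda(t)^{-1}\asymp(1+t)^{-b_0/2}$. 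Everything else is a routine repetition of the arguments in the proofs of Theorems~\ref{Nestea6} and \ref{Nestea6a}.
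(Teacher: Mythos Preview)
Your strategy --- extract a factor $|\xi|^{[\kappa']}$ from $U(0,\xi)$ via the moment conditions and balance it against the deficient propagator decay $(1+t)^{\mu_+}$ --- matches the paper's. There is, however, a genuine gap in your treatment of the small-frequency hyperbolic zone ${\mathcal Z}_{\rm hyp}^s(N)$. You assert that \eqref{Repres6s} with Theorem~\ref{Lema6} ``already give the desired $\lambda(t)^{-1}$ bound with no moment conditions, exactly as in the proofs of Theorems~\ref{Nestea6} and \ref{Nestea6a}''. But those proofs invoked Remark~\ref{rota98}, i.e.\ the uniform bound $\lambda(\theta_{|\xi|})\|\mathcal E(\theta_{|\xi|},0,\xi)\|\lesssim 1$, which rests precisely on $b_0(b_0-2)\le 4m_0$. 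Under the present hypothesis $4m_0<b_0(b_0-2)$ one only has $\lambda(\theta_{|\xi|})\|\mathcal E(\theta_{|\xi|},0,\xi)\|\lesssim (1+\theta_{|\xi|})^{b_0/2+\mu_+}\lesssim |\xi|^{-(\kappa-1)}$, and since $\kappa>1$ here this blows up as $\xi\to0$. Hence the moment conditions are needed in ${\mathcal Z}_{\rm hyp}^s(N)$ just as much as in ${\mathcal Z}_{\rm diss}(N)$. The paper handles both small-frequency zones at once by proving the single pointwise multiplier bound $|\xi|^\kappa\|\mathcal E(t,0,\xi)\|\lesssim\lambda(t)^{-1}$ uniformly for all $|\xi|\le N$ (first in ${\mathcal Z}_{\rm diss}(N)$ via $(1+t)|\xi|\le N$, then carried through ${\mathcal Z}_{\rm hyp}^s(N)$ by \eqref{Repres6s}), and then concludes from $|U(0,\xi)|\lesssim|\xi|^{[\kappa']}$ together with $[\kappa']>\kappa-\tfrac n2$ that $|\xi|^{-\kappa}U(0,\xi)$ is square-integrable over $\{|\xi|\le N\}$.

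A smaller inaccuracy: $u_j\in L^1_{\kappa'}$ only gives $\widehat u_j\in C^{[\kappa']}$, and Taylor's theorem with vanishing derivatives up to order $[\kappa']$ yields $|\widehat u_j(\xi)|\lesssim|\xi|^{[\kappa']}$ near the origin, not $|\xi|^{[\kappa']+1}$ as you write. This shifts your exponent count by one but does not affect the conclusion under the stated hypothesis $[\kappa']>\kappa-\tfrac n2$.
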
  
\begin{proof}
Before giving the details, we outline the main strategy. The $L^1$-condition imposed on the initial data implies continuity in the Fourier image, such that moment conditions determine the order of zero in $\xi=0$. If we can show that $|\xi|^{-\kappa} U(0,\xi)$ is locally square-integrable near $\xi=0$, it suffices to prove the 
estimate
\begin{equation}\label{eq:3.28}
    |\xi|^\kappa \|\mathcal E(t,0,\xi)\| \lesssim \frac1{\lambda(t)}
\end{equation}  
uniformly within $|\xi|\le N$. Then $(1+t)^{-1}\lesssim h(t,x)$ and $|\xi|\lesssim h(t,\xi)$ imply \eqref{eq:3.27}.

{\sl Step 1. Proof of estimate \eqref{eq:3.28}.} First, we consider the case $\sigma=1$. Assume, $(t,\xi)\in\mathcal Z_{\rm diss}(N)$. Then  
Theorem~\ref{WanWir} yields
\begin{equation}
    \| \mathcal E(t,0,\xi)\|\lesssim (1+t)^{\mu_+} 
\end{equation}
with $\mu_+$ given in \eqref{eq:mu}. If $\kappa$ satisfies \eqref{eq:kappa} then 
\begin{equation}
 \kappa>0\qquad\text{and}\qquad   \mu_+ - \kappa = -\frac{b_0}2
\end{equation}
and the definition of the zone implies
\begin{equation}
   |\xi|^\kappa (1+t)^{\mu_+} = |\xi|^\kappa (1+t)^{\kappa} (1+t)^{-b_0/2} \le N^\kappa (1+t)^{-b_0/2}.
\end{equation}
By \eqref{eq:2.6} we know $(1+t)^{b_0/2}\asymp \lambda(t)$ and Remark~\ref{rota98} yields \eqref{eq:3.28} for  all $(t,\xi)\in\mathcal Z_{\rm diss}(N)$.  Furthermore,  Theorem~\ref{Lema6} with $k=1$ extends this estimate to all $(t,\xi)$ with $|\xi|\le N$.

The treatment of $\sigma>1$ is similar. Theorem~\ref{WanWir2} yields for arbitrary $\varepsilon>0$ 
\begin{equation}
    \| \mathcal E(t,0,\xi)\|\lesssim (1+t)^{\mu_++\varepsilon}, 
\end{equation}
such that for sufficiently small $\varepsilon$ again $ |\xi|^\kappa (1+t)^{\mu_+} \le N^{\kappa} (1+t)^{-b_0/2-\varepsilon/2} \lesssim 1/\lambda(t)$.  
This implies \eqref{eq:3.27} within $\mathcal Z_{\rm diss}(N)$ and Theorem~\ref{Lema6} extends this to  $|\xi|\le N$.

{\sl Step 2. Preparation of initial data.} 
The assumption $u_j\in L^1_{\kappa'}(\R^n)$ implies that $\widehat u_j\in C^{[\kappa']}(\R^n)$. Furthermore, by the moment condition
we know that
\begin{equation}
   \partial_\xi^\alpha \widehat u_j(\xi)\big|_{\xi=0} =0,\qquad |\alpha|\le [\kappa']
\end{equation}
and therefore for $|\xi|\le N$
\begin{align}
   |\xi|^{-[\kappa']}  \|U(0,\xi)\| \lesssim 1. 
\end{align}
As $|\xi|^{[\kappa']-\kappa}$ is locally square integrable, the same is true for the function $|\xi|^{-\kappa} U(0,\xi)$ and the desired statement follows.
\end{proof}

\subsection{Modified scattering result}
Now we discuss the sharpness of energy estimates again and formulate a more precise statement. In fact, there is a relation between solutions to the 
Cauchy problem with mass and dissipation
\begin{align} \label{6.32}
u_{tt} - \Delta u + b(t)u_t + m(t) u =0,\qquad u(0,x)=u_0(x),\quad u_t(0,x)=u_1(x),
\end{align}
under our hypotheses and solutions of the free wave equation
\begin{align}\label{6.33}
v_{tt}-\Delta v =0, \qquad v(0,x) = v_0(x), \quad  v_t(0,x)=v_1(x),
\end{align}
with appropriate related data.  We follow some ideas of Wirth \cite{Wirth3} and gives (in combination with the energy conservation for free waves) a very precise description of sharpness of the above energy estimates.
\begin{thm}\label{thm34}
Assume Hypothesis \ref{Hypo6.1} with $\ell=1$ and Hypothesis \ref{Hypo6.2} with 
\begin{equation}
\sigma =1 \qquad \text{and} \qquad b_0(b_0-2)\le 4m_0
\end{equation}
or with
\begin{equation}
\sigma \in(1,2] \qquad \text{and} \qquad b_0(b_0-2)\le 4m_0 < (b_0-1)^2.
\end{equation}
Then there exists a bounded operator 
\begin{equation}
 W_+:  H^1(\R^n) \times L^2(\R^n)  \rightarrow \dot H^1(\R^n) \times L^2(\R^n) 
\end{equation}
 such that
for Cauchy data $(u_0,u_1) \in  H^1(\R^n) \times L^2(\R^n)$ of \eqref{6.32} and associated data
$(v_0,v_1)= W_+(u_0,u_1) \in H^1(\R^n) \times L^2(\R^n) $ to \eqref{6.33} the corresponding solutions $u = u(t,x)$
and $v= v(t,x)$ satisfy
\begin{align}
\| \lambda(t)u_t(t,\cdot) - v_t(t,\cdot)\|_{L^2} & \to 0,\\
\| \lambda(t) \nabla u(t,\cdot) - \nabla v(t,\cdot)\|_{L^2}  &\rightarrow 0,
\end{align}
as $t\rightarrow \infty$.
\end{thm}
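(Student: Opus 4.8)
The plan is to construct the wave operator $W_+$ frequency-wise from the representation formula \eqref{Repres6s} and then use a density argument combined with the uniform bound $\|\mathcal E(t,s,\xi)\|\lesssim\lambda(s)/\lambda(t)$ from Theorems~\ref{WanWir}, \ref{WanWir2} and Remark~\ref{rota98} to pass to general data. First I would work with data whose Fourier transform is compactly supported away from $\xi=0$, say $\supp\widehat u_0,\supp\widehat u_1\subset\{|\xi|\ge\delta\}$ for some $\delta>0$. For such data, after time $\theta_\delta$ the evolution takes place entirely in $\mathcal Z_{\rm hyp}(N)$ (for $|\xi|\ge N$ from the start, for $\delta\le|\xi|\le N$ after the zone boundary), so \eqref{Repres6} and \eqref{Repres6s} apply with $k=1$ and Theorem~\ref{Lema6} gives
\begin{equation}
  \lambda(t)\,\mathcal E(t,0,\xi) = M N_1(t,\xi)\,\mathcal E_0(t,\theta_{|\xi|},\xi)\,\mathcal Q_1(t,\theta_{|\xi|},\xi)\,N_1(\theta_{|\xi|},\xi)^{-1}M^{-1}\lambda(\theta_{|\xi|})\mathcal E(\theta_{|\xi|},0,\xi)
\end{equation}
(with $\theta_{|\xi|}=0$ for $|\xi|\ge N$). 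Since $\mathcal Q_1(t,s,\xi)\to\mathcal Q_1(\infty,s,\xi)$ locally uniformly in $\xi\ne0$, and $N_1(t,\xi)-\mathrm I=O((1+t)^{-1})$, the only genuinely $t$-dependent unitary factor that does not converge is $\mathcal E_0(t,\theta_{|\xi|},\xi)=\mathrm{diag}(\e^{\i(t-\theta_{|\xi|})|\xi|},\e^{-\i(t-\theta_{|\xi|})|\xi|})$. The idea is therefore to absorb this oscillatory factor into the free-wave flow: define the ``scattering symbol''
\begin{equation}
  \mathcal W(\xi) := \mathcal E_0(\theta_{|\xi|},0,\xi)\,M N_1(\theta_{|\xi|},\xi)^{-1}\cdot\big(\text{correction}\big)^{-1}\cdots
\end{equation}
more precisely set $W_+$ so that, in terms of the free-wave micro-energy $V(t,\xi)=(|\xi|\widehat v,\D_t\widehat v)^T$ which evolves by $V(t,\xi)=M\mathcal E_0(t,0,\xi)M^{-1}V(0,\xi)$, the data $V(0,\xi)$ is chosen as
\begin{equation}
  V(0,\xi) = M\,\mathcal E_0(0,\theta_{|\xi|},\xi)\,\mathcal Q_1(\infty,\theta_{|\xi|},\xi)\,N_1(\theta_{|\xi|},\xi)^{-1}M^{-1}\lambda(\theta_{|\xi|})\mathcal E(\theta_{|\xi|},0,\xi)\,U(0,\xi).
\end{equation}
With this choice, $\lambda(t)U(t,\xi)-V(t,\xi) = M N_1(t,\xi)\mathcal E_0(t,0,\xi)M^{-1}\big[\mathcal E_0(0,\theta_{|\xi|},\xi)(\mathcal Q_1(t,\theta_{|\xi|},\xi)-\mathcal Q_1(\infty,\theta_{|\xi|},\xi)) + (N_1(t,\xi)-\mathrm I)\mathcal E_0(\ldots)\mathcal Q_1(\ldots)\big]N_1(\theta_{|\xi|},\xi)^{-1}M^{-1}\lambda(\theta_{|\xi|})\mathcal E(\theta_{|\xi|},0,\xi)U(0,\xi)$, and on the compact $\xi$-support (bounded away from zero and infinity, or with $|\xi|\ge N$) the bracket tends to zero uniformly as $t\to\infty$; since it is also uniformly bounded, dominated convergence in the Plancherel integral gives $\|\lambda(t)U(t,\cdot)-V(t,\cdot)\|_{L^2}\to0$, which is exactly the two claimed convergences after noting $|\xi|\widehat u$ and $\D_t\widehat u$ are the components controlled by $U$.

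Next I would establish that $W_+$, so far defined on the dense subspace of data with Fourier transform compactly supported away from the origin, is bounded from $H^1\times L^2$ to $\dot H^1\times L^2$ and hence extends. This is where Theorems~\ref{WanWir}/\ref{WanWir2} in the strengthened form of Remark~\ref{rota98}, i.e. $\|\mathcal E(t,0,\xi)\|\lesssim 1/\lambda(t)$ together with $\lambda(\theta_{|\xi|})\asymp\theta_{|\xi|}^{b_0/2}\asymp |\xi|^{-b_0/2}$, enter: the factor $\lambda(\theta_{|\xi|})\mathcal E(\theta_{|\xi|},0,\xi)$ is the symbol of order zero and restricted smoothness from Lemma~\ref{PeqLpLqIm}/Remark~\ref{defSymbClasMT}, and $\mathcal Q_1(\infty,\cdot,\cdot)$, $N_1(\theta_{|\xi|},\cdot)^{-1}$, $\mathcal E_0(0,\theta_{|\xi|},\xi)$ are all uniformly bounded on $|\xi|\le N$, while for $|\xi|\ge N$ everything is manifestly bounded. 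Hence the map $U(0,\xi)\mapsto V(0,\xi)$ is given by multiplication by a uniformly bounded matrix symbol, so $\|V(0,\cdot)\|_{L^2}\lesssim\|U(0,\cdot)\|_{L^2}\approx\|u_0\|_{H^1}+\|u_1\|_{L^2}$, and reading off $v_0,v_1$ from $V(0,\xi)=(|\xi|\widehat v_0,\widehat v_1)^T$ shows $v_0\in\dot H^1$, $v_1\in L^2$ with the stated norm control. The extension by density is then routine: for general data take an approximating sequence with frequency support away from the origin, use boundedness of $W_+$ and the energy bound $\|\lambda(t)U(t,\cdot)\|_{L^2}\lesssim\|U(0,\cdot)\|_{L^2}$ plus energy conservation $\|V(t,\cdot)\|_{L^2}=\|V(0,\cdot)\|_{L^2}$ for the free wave, and a standard $3\varepsilon$-argument to conclude $\|\lambda(t)U(t,\cdot)-V(t,\cdot)\|_{L^2}\to0$.

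The main obstacle I expect is the careful bookkeeping at the zone boundary $\theta_{|\xi|}$ for small frequencies $0<|\xi|\le N$: one must verify that the symbol $\lambda(\theta_{|\xi|})\mathcal E(\theta_{|\xi|},0,\xi)$ is genuinely bounded (not just bounded by $1/\lambda(t)$ which degenerates as $|\xi|\to0$ along the boundary) and that $\mathcal Q_1(\infty,\theta_{|\xi|},\xi)$ is well-defined and uniformly bounded and invertible down to the boundary — here the uniform convergence statement in Theorem~\ref{Lema6} and the uniform-in-$(s,\xi)$ integrability of $\mathcal R_1$ are essential, and one should check that $\theta_{|\xi|}\to\infty$ as $|\xi|\to0$ does not spoil convergence rates. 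A secondary subtlety is that $W_+$ lands in $\dot H^1$ rather than $H^1$ precisely because the behaviour near $\xi=0$ is only controlled in the homogeneous sense; this is consistent with the symbol $\lambda(\theta_{|\xi|})\mathcal E(\theta_{|\xi|},0,\xi)\in\dot S^0_\ell$ being homogeneous-of-order-zero but not necessarily bounded as a multiplier on the inhomogeneous space. Once these points are settled the convergence itself is soft, being just dominated convergence against the Plancherel measure.
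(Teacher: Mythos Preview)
Your proposal is correct and follows essentially the same route as the paper's proof: both work on the dense subspace of data with Fourier support bounded away from the origin, use the representation \eqref{Repres6s} with $k=1$ together with $N_1(t,\xi)\to\mathrm I$ and $\mathcal Q_1(t,\theta_{|\xi|},\xi)\to\mathcal Q_1(\infty,\theta_{|\xi|},\xi)$ to identify the limit symbol, and then invoke the uniform energy bound $\|\lambda(t)\mathcal E(t,0,\xi)\|\lesssim 1$ to extend to all data. Your explicit formula for $V(0,\xi)$ is exactly the paper's expression for $W_+(\xi)U(0,\xi)$.

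The only organisational difference is that the paper packages the extension step via the Banach--Steinhaus theorem: it views $W_+(\D)$ as the strong limit of the uniformly bounded family $\lambda(t)\mathcal E_{\rm fr}(t,0,\D)^{-1}\mathcal E(t,0,\D)$ (uniform boundedness coming from Theorem~\ref{Nestea6} and unitarity of the free propagator), whereas you define $W_+$ by its symbol and run an explicit $3\varepsilon$-argument. These are equivalent, and your concerns about the zone boundary are precisely the points the paper handles implicitly through Lemma~\ref{PeqLpLqIm} and the uniform bounds in Theorem~\ref{Lema6}.
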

\begin{proof}
We sketch the major steps of the proof. First let for $\varepsilon>0$  
\begin{equation}
 F_\varepsilon = \left\{ U_0\in L^2(\mathbb R^n)\times L^2(\mathbb R^n) : \ \widehat{U}_0(\xi)=0 \ \text{for any~$|\xi|\leq\varepsilon$} \right\}.
\end{equation}
Then the union $\mathcal L = \bigcup_{\varepsilon>0} F_\varepsilon$ is dense in $L^2(\mathbb R^n)\times L^2(\mathbb R^n)$. We construct the wave operator
$W_+$ as pointwise limit on $\mathcal L$ and apply Banach--Steinhaus theorem to show strong convergence on the energy space. 

We start by introducing notation. Let $\mathcal{E}_0(t,s,\xi)$ be the free propagator from  
\eqref{neve}. Then for any solution $v$ to the free wave equation \eqref{6.33} the classical micro-energy 
$\widetilde V(t,\xi) = \left( |\xi| \widehat{v}, \D_t \widehat{v} \right)^T$ satisfies $\widetilde V(t,\xi) = \mathcal E_{\rm fr}(t,s,\xi) \widetilde V(s,\xi)$ 
for 
\begin{equation}
   \mathcal E_{\rm fr}(t,s,\xi) = M \mathcal E_0(t,s,\xi) M^{-1}.
\end{equation}
We compare this to the propagator $\mathcal E(t,s,\xi)$ constructed in Section~\ref{sec2} for the micro-energy $U(t,\xi) = \left( h(t,\xi) \widehat{u}, \D_t \widehat{u} \right)^T$ associated to solutions $u$ of \eqref{6.32}. By \eqref{Repres6} and \eqref{Repres6s} with $k=1$ this propagator is a product of known matrix functions 
$\mathcal E_0(t,s,\xi)$, $\mathcal Q_1(t,s,\xi)$, $N_1(t,\xi)$ and the function $\lambda(t)$ with known behaviour. We recall from Theorem~\ref{Lema6} that
\begin{equation}
  \lim_{t\to\infty} \mathcal Q_1(t,s,\xi) = \mathcal Q_1(\infty,s,\xi)
\end{equation}
holds true locally uniform in $s$ and $\xi$. Furthermore, $N_1-\mathrm I \in \mathcal S^{\ell}\{-1,1\}$. Our first aim is to show that the limit
\begin{align}
W_+(\xi) = \lim_{t\rightarrow \infty} \lambda(t) \mathcal{E}_{\rm fr}(t,0,\xi)^{-1} \mathcal{E}(t,0,\xi)
\end{align}
exists uniformly in $|\xi| >\varepsilon$ and thus as norm-limit on $F_\epsilon$. Indeed, by \eqref{Repres6} and \eqref{Repres6s} 
\begin{multline}
    \lambda(t) \mathcal{E}_{\rm fr}(t,0,\xi)^{-1} \mathcal{E}(t,0,\xi) \\
           = \lambda(\theta_{|\xi|}) M \mathcal{E}_0(0,t,\xi) N_1(t,\xi) \mathcal{E}_0(t,\theta_{|\xi|},\xi) \\\times \mathcal{Q}_1(t,\theta_{|\xi|},\xi)
                        N_1(\theta_{|\xi|},\xi)^{-1}M^{-1}\mathcal{E}(\theta_{|\xi|},0,\xi)
\end{multline}
such that using 
\begin{multline}
\mathcal{E}_0(0,t,\xi) N_1(t,\xi) \mathcal{E}_0(t,\theta_{|\xi|},\xi) \\= \mathcal{E}_0(0,\theta_{|\xi|},\xi) +
\mathcal{E}_0(0,t,\xi) ( N_1(t,\xi) -I)\mathcal{E}_0(t,\theta_{|\xi|},\xi)
\end{multline}
as well as $N_k(t,\xi) \rightarrow I$ uniformly on $|\xi| \geq \varepsilon$ implies
\begin{multline}
   \lim_{t\to\infty}  \lambda(t) \mathcal{E}_{\rm fr}(t,0,\xi) \mathcal{E}(t,0,\xi) \\
   =  \lambda(\theta_{|\xi|}) M\mathcal{E}_0(0,\theta_{|\xi|},\xi) \mathcal{Q}_1(\infty,\theta_{|\xi|},\xi)
                        N_1(\theta_{|\xi|},\xi)^{-1} M^{-1}\mathcal{E}(\theta_{|\xi|},0,\xi)
\end{multline}
uniformly on $|\xi|>\varepsilon$. We denote this limit as $W_+(\xi)$. Due to the energy estimates of Theorem \ref{Nestea6} we know that
$\lambda(t)\mathcal{E}_{\rm fr}(t,0,\xi)^{-1} \mathcal{E}(t,0,\xi) $ is uniformly bounded in $t$ and $\xi$. Therefore, the matrix $W_+(\xi)$ is uniformly
bounded in $\xi$. Furthermore, by Banach--Steinhaus theorem
we know that 
\begin{equation}\label{eq:3.49}
   W_+(\D) = \slim_{t\to\infty} \lambda(t)\mathcal{E}_{\rm fr}(t,0,\D)^{-1} \mathcal{E}(t,0,\D) 
\end{equation}
exists as strong limit in $L^2(\R^n)\times L^2(\R^n)$.

Now we relate the initial data by $V_0 = W_+(\xi) U_0$. Then the difference of the corresponding micro-energies satisfies
\begin{multline}
\lambda(t)U(t,\xi) - V(t,\xi) 
                    =  \mathcal{E}_{\rm fr} (t,0,\xi)\left(\lambda(t) \mathcal{E}_{\rm fr}(t,0,\xi)^{-1}  \mathcal{E}(t,0,\xi) - W_+(\xi) \right)U(0,\xi).
\end{multline}
By the strong convergence \eqref{eq:3.49} combined with the fact that the free propagator is unitary the limit behaviour
\begin{align}
&  \|  \lambda(t) \nabla u(t,\cdot) - \nabla v(t,\cdot) \|_{L^2}\to 0,\\
 & \|  \lambda(t) u_t (t,\cdot) - v_t(t,\cdot) \|_{L^2} \to 0
\end{align}
follows for all inital data from $H^1(\R^n)\times L^2(\R^n)$. This completes the proof.
\end{proof}

\begin{rem}
The modified scattering result involves only the hyperbolic energy terms $\nabla u(t,\cdot)$ and $u_t(t,\cdot)$. If we are interested in results containing also the solution $u(t,\cdot)$ itself, we can not hope for the same kind of (unweighted) result. Note for this, that the estimate
$\|v(t,\cdot)\|_{L^2} \le t (\|v_0\|_{L^2} + \|v_1\|_{H^{-1}})$ is in general sharp for solutions to the Cauchy problem for the free wave equation, nevertheless there are no
initial data with this precise rate. We only have $\|v(t,\cdot)\|_{L^2} = o(t)$ as $t\to\infty$ for each (fixed) solution. Similarly one obtains for solutions to  \eqref{6.32}
to initial data from $L^2(\R^n)\times H^{-1}(\R^n)$
\begin{equation}
   \lim_{t\to\infty}  \frac{\lambda(t)}{1+t} \|u(t,\cdot)\|_{L^2} = 0.
\end{equation}
This rate is sharp for general data and can only by improved by further assumptions on initial data. We omit the proof.
\end{rem}


\subsection{$L^p$--$L^q$ estimates} Finally, we want to give dispersive type estimates for solutions. These are $L^p$--$L^q$ estimates for conjugate Lebesgue indices. The estimate is again independent of $m_0$, but the range of admissible $b_0$ depends on $m_0$. For this statement we need to use the representations fo Section~2 with $k>1$ and, therefore, we also need higher regularity of the coefficient functions compared to the energy estimates given before.

\begin{thm} \label{NesteaLpLq}
Assume Hypothesis \ref{Hypo6.1} with $\ell=n+1$,
Hypothesis \ref{Hypo6.2} with 
\begin{equation}
\sigma =1 \qquad \text{and} \qquad b_0(b_0-2)\le 4m_0
\end{equation}
or with
\begin{equation}
\sigma \in(1,2] \qquad \text{and} \qquad b_0(b_0-2)\le 4m_0 < (b_0-1)^2.
\end{equation}
Then the $L^p-L^q$ estimate
\begin{multline}
\| (1+t)^{-1} u(t,\cdot) \|_{L^q} 
+ \|\nabla u(t,\cdot)\|_{L^q}
+\|u_t(t,\cdot)\|_{L^q} \\
\le C_{p,n} 
\frac{1}{\lambda(t)} (1+t)^{-\frac{n-1}{2}\left( \frac{1}{p} - \frac{1}{q} \right)}
\big( \|u_0\|_{W^{r+1,p}} + \|u_1\|_{W^{r,p}}\big)
\end{multline}
holds true for $p \in (1,2],$ $pq=p+q$ and with Sobolev regularity $r = n\left(  \frac{1}{p} - \frac{1}{q} \right).$
\end{thm}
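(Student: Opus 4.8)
The strategy is the standard Fourier-multiplier decomposition into the three zones, combined with the representations of solutions from Section~\ref{sec2}. Writing $U(t,\xi) = \mathcal E(t,0,\xi) U(0,\xi)$ and using that $(1+t)^{-1}\lesssim h(t,\xi)$ and $|\xi|\lesssim h(t,\xi)$, it suffices to bound the Fourier integral operator with symbol $\mathcal E(t,0,\xi)$, localised separately to $\mathcal Z_{\rm diss}(N)$, $\mathcal Z_{\rm hyp}^s(N)$ and $\mathcal Z_{\rm hyp}^\ell(N)$, from $L^p$ to $L^q$ with the claimed loss of derivatives $r = n(1/p-1/q)$ and decay rate $\lambda(t)^{-1}(1+t)^{-\frac{n-1}2(1/p-1/q)}$. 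By interpolation with the $L^2$--$L^2$ bound of Theorem~\ref{Nestea6} (respectively Theorem~\ref{Nestea6a}) and duality ($p\in(1,2]$), it is enough to establish the endpoint $L^1$--$L^\infty$ estimate, i.e.\ to estimate the $L^\infty$ norm of the convolution kernel
\begin{equation}
  K(t,x) = \int_{\R^n} \e^{\i x\cdot\xi}\, |\xi|^{-r}\, \mathcal E(t,0,\xi)\, \d\xi
\end{equation}
by $C\,\lambda(t)^{-1}(1+t)^{-\frac{n-1}2}$, after suitably distributing the $|\xi|^{-r}$ weight between the two data components and the $h(t,\xi)$-factors.

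First I would treat the low-frequency contributions. On $\mathcal Z_{\rm diss}(N)\cup\mathcal Z_{\rm hyp}^s(N)$, i.e.\ for $|\xi|\le N$, Lemma~\ref{PeqLpLqIm} gives $\lambda(\theta_{|\xi|})\mathcal E(\theta_{|\xi|},0,\xi)\in\dot S^0_\ell$, and the representation \eqref{Repres6s} combined with Theorem~\ref{Lema6} (applied with $k$ chosen so that $\min\{k-1,\ell-k-1\}\ge n$, which is possible since $\ell=n+1$ — actually one needs to be slightly careful and this forces the precise value of $\ell$) shows that $\lambda(t)\mathcal E(t,0,\xi)$ on this region is, up to the oscillatory factor $\mathcal E_0(t,\theta_{|\xi|},\xi)=\diag(\e^{\pm\i(t-\theta_{|\xi|})|\xi|})$, an element of $\dot S^0$ uniformly in $t$. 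The oscillation has stationary phase only in the compact frequency range $|\xi|\le N$, where the phase $x\cdot\xi \pm (t-\theta_{|\xi|})|\xi|$ is a genuine conic-type phase; a dyadic (Littlewood--Paley) decomposition of $\{|\xi|\le N\}$ together with the classical stationary phase estimate for such phases (as in Reissig--Smith \cite{RS05} and \cite{RW14}) yields the decay $(1+t)^{-\frac{n-1}2}$ and a loss of at most $\frac{n-1}2$ derivatives, hence in particular the stated loss $r\le n$. This part produces the factor $1/\lambda(t)$ cleanly because it was pulled out in \eqref{Repres6s}.

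Next I would treat the high-frequency part $|\xi|\ge N$. Here the representation \eqref{Repres6} with $k$ large gives $\lambda(t)\mathcal E(t,0,\xi) = MN_k(t,\xi)\mathcal E_0(t,0,\xi)\mathcal Q_k(t,0,\xi)N_k(0,\xi)^{-1}M^{-1}$ with $N_k,N_k^{-1}\in\mathcal S^{\ell-k-1}_N\{0,0\}$ and $\mathcal Q_k$ satisfying the symbol estimates \eqref{xiDerivQ} uniformly in $t\ge s$. Thus $\lambda(t)\mathcal E(t,0,\xi)$ equals $\e^{\pm\i t|\xi|}$ times a symbol of order zero with $\ell-k-1$ bounded derivatives, and the $L^1$--$L^\infty$ bound is exactly the classical dispersive estimate for the wave equation: a Littlewood--Paley decomposition in $|\xi|\sim 2^j$, rescaling, and stationary phase on the sphere give $\|K_j\|_{L^\infty}\lesssim 2^{j(n - \frac{n-1}2)}(1+t)^{-\frac{n-1}2}$, and summing the dyadic pieces after removing $r=n(1/p-1/q)$ derivatives (equivalently, $\frac n2$ derivatives at the endpoint, with the usual room obtained from interpolation so that only $r$ appears) yields the claim; one needs $\ell-k-1$ to exceed roughly $n/2$, again consistent with $\ell=n+1$ for a suitable $k$.

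The main obstacle is the bookkeeping that ties together (i) the number $k$ of diagonalisation steps, (ii) the regularity $\ell=n+1$ of the coefficients, and (iii) the number of $\xi$-derivatives of $\mathcal Q_k$ and of $\mathcal E(\theta_{|\xi|},0,\xi)$ actually needed for the stationary phase arguments in both zones — in particular making sure that the constraint $|\alpha|\le\min\{k-1,\ell-k-1\}$ in \eqref{xiDerivQ2} can be met with $|\alpha|$ up to the order ($\sim n$) required by the oscillatory integral estimates, while simultaneously controlling the derivatives \eqref{eq:theta-der} of the zone boundary $\theta_{|\xi|}$ that enter through the chain rule when one differentiates $\mathcal Q_k(t,\theta_{|\xi|},\xi)$ and $N_k(\theta_{|\xi|},\xi)$. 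Once this is arranged, the remaining steps — dyadic decomposition, rescaling, stationary phase on cones and spheres, and the final interpolation/duality — are routine and I would only sketch them, referring to \cite{RS05,RW11,RW14} for the precise oscillatory-integral lemmas.
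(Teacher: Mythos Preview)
Your plan takes a genuinely different route from the paper. You propose an $L^1$--$L^\infty$ endpoint kernel estimate via direct stationary-phase arguments, followed by interpolation with the $L^2$ bound. The paper instead works directly at each $p\in(1,2]$: in the hyperbolic zone it factors the propagator via \eqref{Repres6} and \eqref{Repres6s}, applies the H\"ormander--Mikhlin multiplier theorem to the order-zero symbol factors $MN_k(t,\cdot)$, $\mathcal Q_k(t,\cdot,\cdot)$, $N_k(\cdot)^{-1}M^{-1}$ and $\lambda(\theta_{|\cdot|})\mathcal E(\theta_{|\cdot|},0,\cdot)$ (using also the splitting $\mathcal E_0(t,\theta_{|\xi|},\xi)=\mathcal E_0(t,0,\xi)\,\mathcal E_0(0,\theta_{|\xi|},\xi)$ with the second factor in $\dot{S}^0$), and then invokes the known free-wave dispersive estimate $\|\mathcal E_0(t,0,\D)\|_{p,r\to q}\le C(1+t)^{-\frac{n-1}2(\frac1p-\frac1q)}$ as a black box. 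In the dissipative zone it exploits no oscillation at all---only Hausdorff--Young plus H\"older together with $\|\varphi_{\rm diss}(t,\cdot)\|_{L^{pq/(q-p)}}\lesssim(1+t)^{-n(1/p-1/q)}$ from the shrinking support---which is simpler than what you outline and even yields a better rate. The modular approach also explains the hypothesis $\ell=n+1$ cleanly: H\"ormander--Mikhlin requires only $\lceil n/2\rceil$ derivatives, and with $\ell=2(k-1)$ and $k-1=\lceil n/2\rceil$ the constraint $|\alpha|\le\min\{k-1,\ell-k-1\}$ in \eqref{xiDerivQ2} is met exactly.

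There is also a concrete error in your bookkeeping: you assert that with $\ell=n+1$ one can choose $k$ so that $\min\{k-1,\ell-k-1\}\ge n$, but $\min\{k-1,n-k\}\le\lfloor(n-1)/2\rfloor$ for every integer $k$, so this is impossible. Your later remark that roughly $n/2$ derivatives suffice is the correct count and coincides with what the paper actually uses; carrying your approach through would land on the same number, only after redoing the oscillatory-integral lemmas that the paper simply cites. Finally, your invocation of \eqref{Repres6s} on all of $\{|\xi|\le N\}$ is inaccurate: that representation holds only for $t\ge\theta_{|\xi|}$, whereas on $\mathcal Z_{\rm diss}(N)$ one has to use the dissipative-zone fundamental solution directly (and, as noted, that part is in fact trivial).
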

\begin{proof}
The proof is divided into two steps, we give estimates separately for the dissipative and the hyperbolic zone of the phase space. In the dissipative zone the estimate follows by a simple argument using H\"older inequality, while for the hyperbolic zone we have to employ the stationary phase method. The latter is done by reducing the estimate to the well-known estimate for the free wave equation. 

{\sl Step 1. Considerations in the dissipative zone.}
We first recall the estimate
\begin{equation}
\|  \mathcal{E} (t,0,\xi)  \varphi_{\rm diss} (t,\xi) \| \lesssim \frac{1}{\lambda(t)}
\end{equation}
obtained in Section~\ref{sec2}. If the initial data belong to Sobolev spaces over $L^p$ the initial micro-energy satisfies $U_0\in L^{q}\{|\xi|\le N\}$
and therefore
\begin{align}
\left\|   \mathcal F^{-1} \left( \mathcal{E} (t,0, \cdot ) \varphi_{\rm diss} (t, \cdot ) U_0   \right)
\right\|_{L^q} &\leq  \|  \mathcal{E} (t,0, \cdot ) \varphi_{\rm diss} (t, \cdot ) U_0 \|_{L^p}\notag \\
                                             & \leq   \|  \mathcal{E} (t,0, \cdot )   \|_{L^\infty} \|  \varphi_{\rm diss} (t, \cdot )
                                             \|_{L^{\frac{pq}{q-p}}} \| U_0 \|_{L^q}\notag \\
                                                   &                                   \lesssim   \frac{1}{\lambda(t)}
                                                                                          (1+t)^{-n\left( \frac{1}{p}-\frac{1}{q}\right)}\| U_0 \|_{L^q},
    \end{align}
based on 
\begin{equation}
\|\varphi_{\rm diss}(t,\cdot)\|_{L^{\frac{pq}{q-p}}} \lesssim \left(\int_{|\xi|\le N (1+t)^{-1}} \d\xi \right)^{\frac1p-\frac1q}
\lesssim (1+t)^{-n(\frac1p-\frac1q)},
    \end{equation}
which is a better decay compare to the statement of the theorem.

{\sl Step 2. Considerations in the hyperbolic zone.} For large  frequencies we use the representation of \eqref{Repres6} 
to split the propagator into several parts and estimate each of them separately. For this we choose $k$ such that
\begin{equation}
\ell = 2(k-1) \qquad\text{and}\qquad k-1 \geq \left\lceil \frac{n}{2} \right\rceil.
\end{equation}  We use the short-hand notation 
$p,r\to p,r$ to denote operators acting between the Sobolev / Bessel potential spaces $H^{r,p}  \to H^{r,p}$ 
of regularity $r$ over $L^p$. Then $ \mathcal{E}(t,0,\D)\varphi_{\rm hyp}^\ell ( \D) $ equals
\begin{equation}
   \frac{1}{\lambda(t)} \underbrace{M N_k(t,\D)}_{q \rightarrow q}
\underbrace{\mathcal{E}_0(t,0,\D)}_{p,r \rightarrow q}\underbrace{\mathcal{Q}_k(t,0,\D)}_{p,r \rightarrow p,r}
\underbrace{N_k(0,\D)^{-1}M^{-1}}_{p,r \rightarrow p,r}\underbrace{\varphi_{\rm hyp}^\ell ( \D)}_{p,r\to p,r}
\end{equation}
and we estimate each factor as indicated. To estimate operators for fixed $p$ (and $r$), we apply the H\"ormander--Mikhlin Theorem.
Indeed, by Proposition~\ref{judô2} we know that $MN_k(t,\xi) \in \mathcal S_N^{\ell - k +1}\{0,0\}$. Therefore,  
 $MN_k(t,\xi) \in \dot{S}^0_{\ell-k+1}$ uniformly in $t$ and by H\"ormander--Mikhlin Theorem we conclude
 that 
 \begin{equation}\label{eq:MNk-est}
 \|MN_k(t,\D)\|_{q\to q}\le C
 \end{equation}
 uniformly in $t$. Here it is essential that $\ell -k +1 \geq \left\lceil \frac{n}{2} \right\rceil $.
             
 Next, the well-known dispersive estimate for the free wave equation is equivalent to
 \begin{equation}\label{eq:E0estfree}
    \| \mathcal E_0(t,0,\D) \|_{p,r\to q} \le C (1+t)^{-\frac{n-1}2(\frac1p-\frac1q)}.
 \end{equation}         

For the remaining factors we observe that a Fourier multiplier is bounded between Bessel potential spaces of order $r$
if and only if it is bounded on the $L^p$-spaces. Therefore, it is again sufficient to apply the H\"ormander--Mikhlin multiplier theorem.
By Theorem~\ref{Lema6} we know that $\mathcal{Q}_k(t,0,\xi) \in  \dot{S}_{k-1}^0$ uniformly with respect to $t$ and therefore
\begin{equation}
  \|\mathcal{Q}_k(t,0,\D)\|_{p,r\to p,r} < C
  \end{equation}
uniformly in $t$. Again it is essential that, $k-1 \geq \left\lceil \frac{n}{2} \right\rceil$. Finally, $N_k(0,\xi)^{-1}\in \dot{S}_{k-1}^0$ by construction and 
 $\varphi_{\rm hyp}^\ell \in \dot S^0$. Therefore, it follows that
 \begin{equation}
   \| \mathcal E(t,0,\D)\varphi_{\rm hyp}^\ell(\D) \|_{p,r\to q} \le C \frac{1}{\lambda(t)} (1+t)^{-\frac{n-1}2(\frac1p-\frac1q)}.
 \end{equation}
For the remaining small frequencies we proceed in a similar way.  We have by \eqref{Repres6s} that $\mathcal{E}(t,0,\D)\varphi_{\rm hyp}^s (t, \D)$ equals
\begin{multline}
   \frac{1}{\lambda(t)}  \underbrace{M N_k(t,\D)}_{q \rightarrow q}
\underbrace{\mathcal{E}_0(t,\theta_{|\D|},\D)}_{p,r \rightarrow q}\underbrace{\mathcal{Q}_k(t,\theta_{|\D|},\D)}_{p,r \rightarrow p,r} \underbrace{N_k(\theta_{|\D|},\D)^{-1}M^{-1}}_{p,r \rightarrow p,r} \\
\quad\times \underbrace{ \lambda(\theta_{|\D|}) \mathcal{E}(\theta_{|D|},0,\D) }_{p,r \rightarrow p,r}\underbrace{\varphi_{\rm hyp}^s (t, \D)}_{p,r\to p,r}
\end{multline}
and each of the appearing operators can be again estimated separately. The estimate \eqref{eq:MNk-est} for $M^{-1} N_k^{-1}(t,\D)$ follows in analogy. 
Furthermore, we rewrite $\mathcal{E}_0(t,\theta_{|\D|}, \D) =\mathcal{E}_0(t,0, \D) \mathcal{E}_0(0,\theta_{|\D|}, \D)$. Then 
$\mathcal{E}_0(0,\theta_{|\xi|}, \xi)\varphi_{\rm hyp}^s(t,\xi) \in \dot{S}^0$  implies that the first factor is $L^q$-bounded and therefore the dispersive estimate \eqref{eq:E0estfree} for free waves yields
\begin{equation}
  \|\mathcal{E}_0(t,\theta_{|D|}, D)\varphi_{\rm hyp}^s(t,\D) \|_{p,r\to q} \le C (1+t)^{ -\frac{n-1}2(\frac1p-\frac1q)}.
\end{equation}
By Theorem~\ref{Lema6} it follows that $\mathcal{Q}_k(t,\theta_{|\xi|},\xi) \in \dot{S}_{k-1}^0$. Furthermore, by Proposition~\ref{judô2} and the properties
of $\theta_{|\xi|}$ we know that $N_k(\theta_{|\xi|},\xi) \in \dot S^0$. Therefore 
\begin{equation}
  \|\mathcal{Q}_k(t,\theta_{|\D|},\D)\|_{p,r\to p,r}\le C
  \end{equation}
uniformly in $t$ and
\begin{equation}
  \| N_k(\theta_{|\D|},\D)^{-1} M^{-1}\|_{p,r\to p,r} \le C.
\end{equation} 
By Remark~\ref{defSymbClasMT} we also know that
$ \lambda(\theta_{|\xi|}) \mathcal{E}(\theta_{|\xi|},0,\xi) \in \dot{S}_{l-1}^0$ such that
\begin{equation}
   \| \lambda(\theta_{|\D|}) \mathcal{E}(\theta_{|D|},0,\D)  \|_{p,r\to p,r} \le C.
\end{equation}
Hence, it follows that
 \begin{equation}
   \| \mathcal E(t,0,\D)\varphi_{\rm hyp}^s(\D) \|_{p,r\to q} \le C \frac{1}{\lambda(t)} (1+t)^{-\frac{n-1}2(\frac1p-\frac1q)}
 \end{equation}
 and combining all three parts the statement follows from the definition of the micro-energy.
\end{proof}

\section{Concluding remarks}
{\bf 1.} First, we will give some comments on the relation of the presented results to the known treatments of Wirth \cite{Wirth1} for non-effectively damped wave equations (i.e., $m(t)=0$) and B\"ohme--Reissig \cite{Boehme} for Klein--Gordon equations with non-effective time-dependent mass (i.e., $b(t)=0$). 

If $m_0=0$ and $b_0\in [0,1)\cup(1,2)$ then we are in the setting of \cite{Wirth1} (or \cite{Wirth4} for the particular case $b(t)= \frac{b_0}{1+t}$)
and the estimates of Theorems~\ref{Nestea6} and \ref{NesteaLpLq}, both with $\sigma=1$, reduce to results from these papers. 

If $b_0=0$ we can treat arbitrary $m_0$ and obtain form Theorems~\ref{Nestea6} and \ref{NesteaLpLq}, both with $\sigma=1$,  uniform bounds on the energy as well as the standard wave type $L^p$--$L^q$ decay estimates. The scale-invariant case was considered in \cite{Boehme} with similar observations. 

The results based on Hypothesis~\ref{Hypo6.2} with $\sigma>1$ are new for both situations.

{\bf 2.} The main reason for the present note is to provide a more systematic approach to the small-frequency zone and to highlight its connection to differential equations of Fuchs type. Seen in this light, equations with mass and dissipation are just a model case to set the scene for more general considerations. An extension of this method to give a more systematic small frequency counterpart to \cite{RW11} is planned. This also explains the appendix collecting the asymptotic integration statements in general form.

{\bf 3.} The restriction of Assumption (A2) to the range $\sigma\in(1,2]$ is due to just one application of the Hartmann-Wintner transform of Theorem~\ref{HartWintner}. Applying finitely many such transformations in an iterative way allows to extend Hypothesis~\ref{Hypo6.2}  to arbitrary $\sigma>1$. The price to pay for this is a series of correction terms in \eqref{AsymptBehaPDp} instead of just one. The estimates of theorems~\ref{Nestea6a}, \ref{thm:33}, \ref{thm34} and \ref{NesteaLpLq} still depend on the hyperbolic zone and (as long as the right-hand side of\eqref{AsymptBehaPDp} with $s=0$ is still majorised by $\lambda(t)^{-1}$) are valid unchanged. 

{\bf 4.} The estimates of the solution $u(t,\cdot)$ itself following from Theorems~\ref{Nestea6}, \ref{Nestea6a} and \ref{thm:33} are not optimal in the present form. This is due to the attempted $\sigma$-independent formulation of the results. Under Hypothesis~\ref{Hypo6.2} with $\sigma=1$ it is easily seen that the estimate for the solution can be improved to 
\begin{equation}
   \| u(t,\cdot)\|_{L^2} \lesssim (1+t)^{1+\Re \mu_+}   \big( \|u_0\|_{H^1} + \|u_1\|_{L^2} \big)
\end{equation}
and for the convenience of the reader we give the essential argument behind this improvement. The improvement is based on \eqref{eq:2.6}. Within $\mathcal Z_{\rm diss}(N)$ the construction gave the estimate
\begin{equation} 
 \left|  \frac1{1+t} \widehat u(t,\xi) \right|  \lesssim (1+t)^{\Re \mu_+} \big( \widehat u_0(t,\xi) + \widehat u_1(t,\xi) \big).
\end{equation}
If we consider the hyperbolic zone $\mathcal Z_{\rm hyp} (N)$, we obtain in analogy
\begin{equation} 
  \big|  |\xi| \widehat u(t,\xi) \big|  \lesssim \begin{cases} (1+t)^{-\frac{b_0}2} \big( |\xi| \widehat u_0(t,\xi) + \widehat u_1(t,\xi) \big) \qquad & |\xi|>N\\
 \left( \frac{1+t}{1+t_\xi}\right)^{-\frac{b_0}2} (1+t_\xi)^{\Re \mu_+} \big( \widehat u_0(t,\xi) + \widehat u_1(t,\xi) \big) &|\xi|\le N
  \end{cases}
\end{equation}
and together with $|\xi|(1+t_\xi)=N$, the positivity of $\delta = 1+\frac{b_0}2 +\Re \mu_+ > 0$ and the resulting monotonicity of $t^{\delta}$ the desired estimate follows.
For $\sigma>1$ the latter need not be valid any more and improvements depend on the behaviour of the quotient of the right-hand side of \eqref{AsymptBehaPDp} and $\lambda(t)$.

\begin{appendix}
\section{Asymptotic integration lemmata}\label{secA}
In this appendix we collect some theorems on the asymptotic integration of ordinary differential equations, which are particularly useful for the treatment of the dissipative zone. We formulate them in more general form than used in the present paper. They follow \cite[Sections 1.3 and 1.4]{Eastham} adapted to systems of Fuchs type.

\subsection{Levinson type theorems}

We consider the following system of ordinary differential equations
\begin{equation}\label{eq:A1}
   t\partial_t V (t,\nu) = \big( D(t,\nu) + R(t,\nu) \big) V(t,\nu),\qquad t\ge 1,
\end{equation}
depending on a parameter $\nu\in\Upsilon$. The matrix 
\begin{equation}
D(t,\nu)=\diag\big(\mu_1(t,\nu),\ldots,\mu_d(t,\nu)\big)
\end{equation} 
is diagonal and $R(t,\nu)\in\C^{d\times d}$ denotes a remainder term. 

Under a dichotomy condition imposed on $D$ and appropriate smallness conditions on the remainder $R$ the diagonal matrix $D$ determines asymptotic properties of solutions to
\eqref{eq:A1}. We denote by $e_k$ the $k$-th basis vector of $\C^d$.

\begin{thm}\label{thm:Lev1}
Assume that for $i\ne j$
\begin{multline}\label{eq:LevCond1}
\limsup_{t\to\infty} \sup_{\nu\in\Upsilon} \Re   \int_1^t \big( \mu_i(s,\nu) - \mu_j(s,\nu) \big)\frac{\d s}s <  +\infty
\\  \text{or}\quad
\liminf_{t\to\infty} \inf_{\nu\in\Upsilon} \Re   \int_1^t \big( \mu_i(s,\nu) - \mu_j(s,\nu) \big)\frac{\d s}s > -\infty
\end{multline}
together with
\begin{equation}\label{eq:LevCond2}
    \sup_{\nu\in\Upsilon}   \int_1^\infty \|R(t,\nu)\| \frac{\d t}{t} < \infty.
\end{equation}
Then there exist solutions $V_k(t,\nu)$ to \eqref{eq:A1} satisfying
\begin{equation}
    V_k(t,\nu) = \big( e_k + o(1) \big) \exp\left( \int_{1}^t \mu_k(\tau,\nu) \frac{\d\tau}\tau \right)  
\end{equation}
uniformly in the parameter $\nu\in\Upsilon$.
\end{thm}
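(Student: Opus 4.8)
The plan is to prove this Levinson-type theorem by following the classical scheme, adapted to the Fuchs (Euler) scaling $t\partial_t$ rather than $\partial_t$. First I would reduce to the standard form via the change of variable $\tau=\ln t$, so that $t\partial_t = \partial_\tau$; this turns \eqref{eq:A1} into a system $\partial_\tau W = (\widetilde D(\tau,\nu)+\widetilde R(\tau,\nu))W$ with $\widetilde R\in L^1([0,\infty),\d\tau)$ uniformly in $\nu$ by \eqref{eq:LevCond2}, and the dichotomy condition \eqref{eq:LevCond1} becomes the usual Levinson dichotomy on $\int_0^\tau \Re(\widetilde\mu_i-\widetilde\mu_j)\,\d s$. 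Fixing an index $k$, I would substitute $W(\tau,\nu)=Z(\tau,\nu)\exp\!\big(\int_0^\tau\widetilde\mu_k(s,\nu)\,\d s\big)$ to get an equation for $Z$ whose diagonal part has entries $\widetilde\mu_i-\widetilde\mu_k$, and seek a solution with $Z(\tau,\nu)\to e_k$.

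The heart of the argument is the construction of $Z$ as a fixed point of an integral operator built by integrating the "stable" components (those indices $i$ satisfying the first alternative in \eqref{eq:LevCond1} relative to $k$) from $+\infty$ down to $\tau$, and the remaining components from a fixed base point up to $\tau$. Concretely, writing the fundamental solution $\Phi_k(\tau,s)$ of the diagonal part $\diag(\widetilde\mu_i-\widetilde\mu_k)$ and the associated dichotomy projections $P$ and $I-P$ determined by \eqref{eq:LevCond1}, I would set up
\begin{equation}
(\mathcal T Z)(\tau,\nu) = e_k + \int_{\tau_0}^\tau \Phi_k(\tau,s)(I-P)\widetilde R(s,\nu)Z(s,\nu)\,\d s - \int_\tau^\infty \Phi_k(\tau,s)P\,\widetilde R(s,\nu)Z(s,\nu)\,\d s.
\end{equation}
The dichotomy condition guarantees that $\|\Phi_k(\tau,s)(I-P)\|$ and $\|\Phi_k(\tau,s)P\|$ (for $s\ge\tau$, resp.\ $s\le\tau$) are uniformly bounded in $\tau,s,\nu$, so that $\mathcal T$ is a contraction on the ball of radius, say, $2$ in $C_b([\tau_0,\infty))$ once $\tau_0$ is chosen large enough that $\sup_\nu\int_{\tau_0}^\infty\|\widetilde R\|\,\d s$ is small; here it is essential that both the bound on $\Phi_k$ and the tail of $\widetilde R$ are uniform in $\nu$, which is exactly what the hypotheses supply. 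The fixed point $Z(\tau,\nu)$ then satisfies the transformed equation, is bounded, and $Z(\tau,\nu)-e_k\to 0$ as $\tau\to\infty$ uniformly in $\nu$ because the integral terms are tails of a uniformly $L^1$ integrand times a bounded function. Undoing the substitutions $W\mapsto Z$ and $\tau=\ln t$ yields $V_k$.

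The main obstacle I anticipate is handling the uniformity in the parameter $\nu$ cleanly: one must check that the dichotomy projections $P=P_k(\nu)$ can be chosen consistently (they are locally constant in a suitable sense since each ordered pair $(i,k)$ falls into one of the two alternatives, though possibly not uniformly in $\nu$ — one may need to partition $\Upsilon$ or argue pointwise and then note the bounds are uniform), and that the bound on $\Phi_k(\tau,s)$ coming from \eqref{eq:LevCond1} is genuinely uniform over $\nu$ and over the relevant range of $(\tau,s)$. A secondary point is passing from "a solution tending to $e_k$" for each $k$ to a genuine fundamental system — but this follows from the standard observation that the resulting $V_1,\ldots,V_d$ are asymptotically independent (their leading exponential factors and limiting directions are distinct modulo the dichotomy structure), hence linearly independent for large $t$, and then for all $t\ge 1$ by uniqueness for linear ODEs. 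I would also remark that the smallness of $\int_{\tau_0}^\infty\|\widetilde R\|$ only gives the fixed point on $[\tau_0,\infty)$; extending back to $t\ge 1$ is immediate by solving the linear ODE backwards, which does not affect the stated asymptotics.
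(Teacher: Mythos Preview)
Your proposal is correct and follows essentially the same route as the paper: reduce via $\tau=\ln t$ (the paper mentions this as the link to Eastham's Theorem~1.3.1 but then works directly in the Fuchs variable), factor out $\exp\!\big(\int\mu_k\big)$, split the Duhamel integral according to the dichotomy, and apply the contraction mapping principle on $[\tau_0,\infty)$ with $\tau_0$ large. Your concern about $\nu$-dependent projections is unnecessary here: the hypothesis \eqref{eq:LevCond1} carries $\sup_\nu$ and $\inf_\nu$ inside the $\limsup$/$\liminf$, so for each ordered pair $(i,j)$ the alternative that holds is the same for all $\nu$, and the paper exploits exactly this to fix a global ordering (``we may assume without loss of generality that for $i<j$ the first alternative holds''); your discussion of a fundamental system is likewise not needed for the theorem as stated.
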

\begin{proof}
This is a reformulation of Theorem~1.3.1 from \cite{Eastham} with the substitution $t=\e^x$. For the convenience of the reader we sketch the main idea of the proof. We may replace the dichotomy condition \eqref{eq:LevCond1} by an 'either-or' statement assuming in the first case that in addition
\begin{equation}\label{eq:LevCond1+}
\liminf_{t\to\infty} \inf_{\nu\in\Upsilon} \Re   \int_1^t \big( \mu_i(s,\nu) - \mu_j(s,\nu) \big)\frac{\d s}s = -\infty
\end{equation}
holds true. This yields an ordering of the diagonal entries according to their strength and we may assume without loss of generality that for $i<j$ the first alternative holds true. 
Furthermore, if we write
\begin{equation}
   V(t,\nu)  = Z(t,\nu)  \exp\left( \int_{1}^t \mu_k(\tau,\nu) \frac{\d\tau}\tau \right)  
\end{equation}
for a fixed index $k$ then the function $Z(t,\nu)$ satisfies the transformed equation
\begin{equation}\label{z-eq}
   t\partial_t Z(t,\nu) =  (D(t,\nu)-\mu_k(t,\nu) I + R(t,\nu)) Z(t,\nu)
\end{equation}
and we have to show that there exists a solution to that equation tending to $e_k$ uniformly with respect to $\nu\in\Upsilon$. 
Thus it is sufficient to prove the original theorem for the case $\mu_k=0$. 
Let $\Phi(t) = \Phi_-(t,\nu) + \Phi_+(t,\nu)$ 
be the fundamental solution to the diagonal part, split as
\begin{equation}
\Phi_-(t,\nu) = \diag(\exp\left( \int_{1}^t \mu_1(\tau,\nu) \frac{\d\tau}\tau \right)  ,
\ldots, \exp\left( \int_{1}^t \mu_{k-1}(\tau,\nu) \frac{\d\tau}\tau \right)  ,0,\ldots)
\end{equation}
and 
\begin{equation}
\Phi_+(t,\nu) = \diag(0,\ldots,0,1,\exp\left( \int_{1}^t \mu_{k+1}(\tau,\nu) \frac{\d\tau}\tau \right)  ,\ldots, \exp\left( \int_{1}^t \mu_d(\tau,\nu) \frac{\d\tau}\tau \right)   )
\end{equation}
according to the asymptotics of the entries. Then \eqref{eq:A1} rewrites as an integral equation
\begin{multline}\label{eq:A6}
  V(t,\nu) = e_k + \Phi_-(t,\nu) \int_{t_0}^t \Phi^{-1}(\tau,\nu) R(\tau,\nu) V(\tau,\nu) \frac{\d\tau}{\tau} \\
  - \Phi_+(t,\nu) \int_t^\infty \Phi^{-1}(\tau,\nu) R(\tau,\nu) V(\tau,\nu) \frac{\d\tau}{\tau}.
\end{multline}
By construction we obtain $\|\Phi_-(t,\nu) \Phi(\tau,\nu)^{-1}\|\le C_-$ uniformly on $1\le \tau\le t$ and
$\|\Phi_+(t,\nu)\Phi(\tau,\nu)^{-1}\|\le C_+$ uniformly on $t\le \tau<\infty$. Thus,
this equation can be  solved uniquely in $L^\infty([1,\infty))$ by the contraction mapping principle as
\begin{multline}
 \bigg| \Phi_-(t,\nu) \int_1^t \Phi^{-1}(\tau,\nu) R(\tau,\nu) V(\tau,\nu) \frac{\d\tau}{\tau} \\
  - \Phi_+(t,\nu) \int_t^\infty \Phi^{-1}(\tau,\nu) R(\tau,\nu) V(\tau,\nu) \frac{\d\tau}{\tau} \bigg| 
  \\\le  (C_-+C_+) \int_{t_0}^\infty \|R(\tau,\nu)\|\frac{\d\tau}{\tau} \|V(\cdot,\nu)\|_\infty
\end{multline}
is contractive for $t_0$ sufficiently large. Thus, solutions to \eqref{eq:A6} are uniformly bounded. To show that 
they tend to $e_k$ for $t\to\infty$ uniformly with respect to $\nu\in\Upsilon$ one uses the stronger form \eqref{eq:LevCond1}--\eqref{eq:LevCond1+} of the dichotomy condition. Indeed, writing \eqref{eq:A6} for $t>T$ as
\begin{equation}
  V(t,\nu) = e_k + \Phi_-(t,\nu) \int_{t_0}^T \Phi^{-1}(\tau,\nu) R(\tau,\nu) V(\tau,\nu) \frac{\d\tau}{\tau} + \Psi(t,\nu) 
\end{equation}
with
\begin{multline}
   \Psi(t,\nu) = \Phi_-(t,\nu) \int_{T}^t \Phi^{-1}(\tau,\nu) R(\tau,\nu) V(\tau,\nu) \frac{\d\tau}{\tau} \\
  - \Phi_+(t,\nu) \int_t^\infty \Phi^{-1}(\tau,\nu) R(\tau,\nu) V(\tau,\nu) \frac{\d\tau}{\tau}
\end{multline}
we obtain
\begin{equation}
   \| \Psi(t,\nu)\| \le (C_-+C_+) \int_T^\infty \| R(\tau,\nu)\| \frac{\d\tau}{\tau}  \|V(\cdot,\nu)\|_\infty 
\end{equation}
uniformly in $t\ge T$ and $\nu\in\Upsilon$. Hence, we can choose $T$ large enough such that $\|\Psi(t,\nu)\|\le \varepsilon$. But then the dichotomy condition implies $\Phi_-(t,\nu)\to 0$ uniformly in $\nu$ and thus 
\begin{equation}
   \| V(t,\nu) - e_k \| \le 2\varepsilon
\end{equation}
holds true uniformly in $\nu\in\Upsilon$ and $t>T$ sufficiently large. As $\varepsilon$ was arbitrary, the statement is proven.
\end{proof}

\begin{rem} \label{ConsFundSol}
We will use a special form of the previous theorem, where the diagonal matrices $D$ are constant and independent of $\nu$,
\begin{equation}
   D = \diag (\mu_1, \ldots, \mu_d).
\end{equation}
In this case the dichotomy condition  \eqref{eq:LevCond1} is trivially satisfied as the appearing integrals are all logarithmic functions in $t$
which can't approach  both infinities. Hence, \eqref{eq:LevCond2} is sufficient to conclude the existence of solutions 
\begin{equation}
   V_k(t,\nu) = (e_k + o(1) ) t^{\mu_k} 
\end{equation}
for all $k$ and if {\em in addition} it is known that $\mu_i\ne\mu_j$ for $i\ne j$ this yields a fundamental system of solutions. If the diagonal entries coincide, one has to make further assumptions on lower order terms to get precise asymptotic properties, in particular \eqref{eq:LevCond2} has to be replaced by adding logarithmic terms. 

Levinson's theorem yields a corresponding statement for the fundamental matrix-valued solution to \eqref{eq:A1}. This follows immediately from the following variant of Liouville theorem. We assume for simplicity that $D$ is constant and that the entries are distinct. Then we take the solutions $V_k$ constructed above as fundamental system. Their Wronskian satisfies
\begin{equation}
   \mathcal W_{V_1,\ldots, V_d} (t) = \det\big( V_1(t,\nu) | \cdots | V_d(t,\nu)\big) = t^{\mu_1+\mu_2+\cdots+\mu_k}.
\end{equation}
If we denote by $\mathcal E_V(t,1,\nu)$ the matrix valued solution to
\begin{equation}\label{eq:A2}
   t\partial_t \mathcal E_V (t,1,\nu) = \big( D + R(t,\nu) \big) \mathcal E_V(t,1,\nu),\qquad t\ge 1,
\end{equation}
combined with $\mathcal E_V(1,1,\nu) = \mathrm I$, it follows that
\begin{equation}
  \mathcal E_V(t,1,\nu) = \big( V_1(t,\nu)  | \cdots | V_d(t,\nu) \big)   \big( V_1(1,\nu)  | \cdots | V_d(1,\nu) \big)^{-1}
\end{equation}
and the norm of the inverse matrix can be estimated by Carmer's rule combined with Hadamard's inequality as
\begin{equation}
\|\big( V_1(1,\nu)  | \cdots | V_d(1,\nu) \big)^{-1} \| \le 
d \big(\max_{1\le k\le d} \|V_k(1,\nu)\| \big)^{d-1}
\end{equation}
and thus 
\begin{equation}
   \| \mathcal E_V(t,1,\nu) \| \le C t^{\max_j \Re\mu_j}
\end{equation}
uniformly in $\nu$.
\end{rem}

\begin{rem}\label{rem:scaling}
We can use scaling properties of Fuchs type equations. If $V(t,\nu)$ solves \eqref{eq:A1} then $\tilde V(t,\nu)=V(\lambda t,\nu)$ solves
the rescaled equation
\begin{equation}
   t\partial_t \tilde V(t,\nu) = \big(D(\lambda t,\nu)  + R(\lambda t,\nu)\big) \tilde V(t,\nu).
\end{equation} 
If $\lambda>1$ then 
\begin{equation}
   \int_1^\infty \| R(\lambda t,\nu)\|\frac{\d t}{t} =  \int_\lambda^\infty \| R( t,\nu)\|\frac{\d t}{t} \le  \int_1^\infty \| R( t,\nu)\|\frac{\d t}{t}
\end{equation}
and similarly for the integrals in \eqref{eq:LevCond1}. Hence, the conditions of Levinson's theorem are uniform in $\lambda$ and thus are the constructed 
solutions. Therefore, any estimate of the 
fundamental solution given in Remark~\ref{ConsFundSol} is also uniform and therefore of the form
\begin{equation}
   \|\mathcal E_V(\lambda t,\lambda,\nu)\| =   \|\mathcal E_{\tilde V} (t,1,\nu) \| \le C t^{\max_j \Re\mu_j}
\end{equation}
uniformly in $\lambda>1$ and $\nu\in\Upsilon$.
\end{rem}

\subsection{Hartman--Wintner type theorems}
Now we discuss improvements of Theorem \ref{thm:Lev1} based on a diagonalisation procedure. They allow to handle remainders satisfying
\begin{equation}\label{eq:HWcond}
      \int_1^\infty \| R(t,\nu)\|^\sigma \frac{\d t}{t} < C
\end{equation}   
for some constant $1<\sigma<\infty$. They are constructive and give precise asymptotics similar to the above theorem. We formulate it in more general form with diagonal matrix $D(t,\nu)$ with entries satisfying the stronger form of the dichotomy condition
\begin{multline}\label{eq:LevC2}
   \Re \big( \mu_i(t,\nu) - \mu_j(t,\nu) \big) \le C_-  
\quad \text{or}\quad
  \Re \big( \mu_i(t,\nu) - \mu_j(t,\nu) \big) \ge C_+
\end{multline}
uniform in $t\ge t_0$ and $\nu\in\Upsilon$. It implies \eqref{eq:LevCond1}.

\begin{thm}\label{HartWintner}
Assume \eqref{eq:LevC2} in combination with \eqref{eq:HWcond}. Let further 
\begin{equation}
F(t,\nu) = \diag R(t,\nu)
\end{equation}
denote the diagonal part of $R(t,\nu)$. Then we find a matrix-valued function
$N(t,\nu)$ satisfying
\begin{equation}
    \int_1^\infty \| N(t,\nu) \|^\sigma \frac{\d t}{t} < C'
\end{equation}
uniformly in $\nu\in\Upsilon$ such that the differential expression
\begin{multline}\label{eq:HW-com-eq}
    \big( t\partial_t  - D(t,\nu) - R(t,\nu)\big) \big( \mathrm I+N(t,\nu) \big)\\
    - \big(\mathrm I+N(t,\nu) \big)    \big( t\partial_t  - D(t,\nu) - F(t,\nu)\big)
    = B(t,\nu)
\end{multline}
satisfies
\begin{equation}\label{eq:HWcond'}
   \int_1^\infty \| B(t,\nu)\|^{\max\{\sigma/2,1\}} \frac{\d t}{t} <\infty.
\end{equation}
Furthermore, $N(t,\nu)\to0$ as $t\to\infty$ such that the matrix $\mathrm I+N(t,\nu)$ is invertible for $t\ge t_0$. Hence, $\tilde V = (\mathrm I+ N(t,\nu))^{-1} V$ solves the transformed problem 
\begin{equation}
    t\partial_t \tilde V = \big( D(t,\nu)+F(t,\nu) +  R_1(t,\nu) \big) \tilde V
\end{equation}
with $R_1(t,\nu) =  (\mathrm I+ N(t,\nu))^{-1} B(t,\nu)$ also satisfying \eqref{eq:HWcond'}.
\end{thm}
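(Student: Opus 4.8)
The plan is to reduce the statement to a single commutator equation for $N$, solve it explicitly by means of the dichotomy \eqref{eq:LevC2}, and then read off the remaining assertions. Expanding the differential operator in \eqref{eq:HW-com-eq} and using $t\partial_t\big((\mathrm I+N)\psi\big)=(t\partial_t N)\psi+(\mathrm I+N)t\partial_t\psi$, one finds (suppressing the arguments $(t,\nu)$)
\[
   B = t\partial_t N - [D,N] - (R-F) + NF - RN .
\]
Both $[D,N]$ and $R-F$ have vanishing diagonal (since $D$ is diagonal and $F=\diag R$), so it is natural to look for an off-diagonal $N$ solving $t\partial_t N - [D,N] = R-F$; equivalently, for $i\ne j$ the scalar Fuchs type equations $t\partial_t n_{ij} - (\mu_i-\mu_j)n_{ij} = r_{ij}$, while $n_{ii}\equiv0$. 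With such an $N$ the leftover is the purely bilinear term $B = NF - RN$, which is quadratically small in $R$ and $N$.

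First I would construct $N$. Fix $i\ne j$ and let $\omega_{ij}(t,\nu)=\exp\big(\int_{t_0}^t(\mu_i(\tau,\nu)-\mu_j(\tau,\nu))\,\d\tau/\tau\big)$ be the homogeneous Fuchs solution. The dichotomy \eqref{eq:LevC2} gives, uniformly in $\nu$, either $|\omega_{ij}(t,\nu)/\omega_{ij}(s,\nu)|\le(s/t)^{|C_-|}$ for $s\le t$ — in which case I integrate forward, $n_{ij}(t)=\int_{t_0}^t\frac{\omega_{ij}(t,\nu)}{\omega_{ij}(s,\nu)}r_{ij}(s,\nu)\,\frac{\d s}{s}$ — or $|\omega_{ij}(t,\nu)/\omega_{ij}(s,\nu)|\le(t/s)^{C_+}$ for $s\ge t$ — in which case I integrate backward, $n_{ij}(t)=-\int_t^\infty\frac{\omega_{ij}(t,\nu)}{\omega_{ij}(s,\nu)}r_{ij}(s,\nu)\,\frac{\d s}{s}$. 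In the logarithmic variable $x=\ln t$ both formulas realise $n_{ij}$ as a convolution of $r_{ij}$ with a kernel of the shape $\e^{-c|u|}$ supported on a half-line, which belongs to every $L^p(\R)$ and has $L^1$-norm at most $\max\{|C_-|^{-1},C_+^{-1}\}$ independently of $\nu$. Young's inequality then yields $\|N(\cdot,\nu)\|_{L^\sigma(\d t/t)}\lesssim\|R(\cdot,\nu)\|_{L^\sigma(\d t/t)}$ uniformly in $\nu$, which is the asserted bound on $N$; since $\sigma<\infty$, a convolution $L^1\ast L^\sigma$ vanishes at infinity, so $N(t,\nu)\to0$ and $\mathrm I+N(t,\nu)$ is invertible with uniformly bounded inverse once $t$ exceeds an enlarged $t_0$. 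On the compact interval $[1,t_0]$ all quantities are bounded, so the global integrals over $[1,\infty)$ are finite.

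It remains to estimate $B=NF-RN$ and transform the equation. Since $\|F\|=\|\diag R\|\le\|R\|$ we have $\|B(t,\nu)\|\le 2\|R(t,\nu)\|\,\|N(t,\nu)\|$. For $\sigma\ge2$, Cauchy--Schwarz gives $\int_1^\infty\|B\|^{\sigma/2}\,\frac{\d t}{t}\le 2^{\sigma/2}\big(\int_1^\infty\|R\|^\sigma\tfrac{\d t}{t}\big)^{1/2}\big(\int_1^\infty\|N\|^\sigma\tfrac{\d t}{t}\big)^{1/2}<\infty$. For $1<\sigma<2$ one uses, in addition, that Young's inequality applied to the kernel's $L^p$-norm with $p=\big(2-\tfrac2\sigma\big)^{-1}<\infty$ also gives $N(\cdot,\nu)\in L^{\sigma'}(\d t/t)$, so that H\"older with the pair $(\sigma,\sigma')$ yields $\int_1^\infty\|B\|\,\frac{\d t}{t}\le 2\|R\|_{L^\sigma}\|N\|_{L^{\sigma'}}<\infty$; in both cases this is exactly \eqref{eq:HWcond'} with exponent $\max\{\sigma/2,1\}$. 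Finally, rearranging the identity for $B$ as $(D+R)(\mathrm I+N)=(\mathrm I+N)(D+F)+t\partial_t N-B$ and substituting $V=(\mathrm I+N)\tilde V$ into $t\partial_t V=(D+R)V$ gives $t\partial_t\tilde V=(D+F+R_1)\tilde V$ with $R_1=-(\mathrm I+N)^{-1}B$; since $\|R_1\|\le\|(\mathrm I+N)^{-1}\|\,\|B\|\lesssim\|B\|$ for $t\ge t_0$, $R_1$ inherits \eqref{eq:HWcond'}.

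The main obstacle is the middle step: producing the correcting matrix $N$ with the quantitative $L^\sigma$ bound (and the decay needed for invertibility), uniformly in the parameter $\nu$. Everything there rests on turning the pointwise dichotomy \eqref{eq:LevC2} — which in effect is a genuine separation, as holds in the applications where the $\mu_k$ are real and distinct — into the uniform exponential bounds on the homogeneous Fuchs solutions that feed Young's inequality. No fixed point argument is needed here, unlike in Theorem~\ref{thm:Lev1}; the algebraic identity for $B$, the bilinear bound, and the back-transformation are then routine.
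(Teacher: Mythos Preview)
Your proof is correct and follows essentially the same route as the paper's: solve the commutator equation $t\partial_t N=[D,N]+(R-F)$ entrywise via Duhamel integrals with direction of integration dictated by the dichotomy, then estimate the bilinear leftover $B=NF-RN$ by H\"older. The only wrinkle is that for non-constant $\mu_i-\mu_j$ the Duhamel integral is not literally a convolution in the variable $\ln t$ but is pointwise dominated by one, which is all Young's inequality (equivalently, the paper's Minkowski argument) needs; with that caveat your $L^\sigma$, $L^{\sigma'}$ and $C_0$ bounds on $N$ go through exactly as written.
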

\begin{proof}
This follows \cite{Eastham} Section 1.5 and is a version of the diagonalisation scheme we applied earlier on. 
We set 
$ D_1 (t,\nu) = D(t,\nu) +F(t,\nu)$, $F(t,\nu)=\diag R(t,\nu) $
und denote $\tilde R(t,\nu) = R(t,\nu)-F(t,\nu)$. We construct $N(t,\nu)$ as solution to 
\begin{equation}\label{eq:HW-ce-n}
   t\partial_t N(t,\nu) = D(t,\nu) N(t,\nu) - N(t,\nu) D(t,\nu) +\tilde R(t,\nu),\qquad
   \lim_{t\to\infty} N(t,\nu) = 0,
\end{equation}
such that equation \eqref{eq:HW-com-eq} becomes
\begin{equation}\label{eq:B-def}
  B(t,\nu) = N(t,\nu) F(t,\nu)-R(t,\nu) N(t,\nu).
\end{equation}
In a first step we estimate $N(t,\nu)$. Considering individual matrix entries \eqref{eq:HW-ce-n} reads as
\begin{align}
   t\partial_t n_{jj} (t,\nu) &= 0,\\
   t\partial_t n_{ij}(t,\nu) &= (\mu_i(t,\nu)-\mu_j(t,\nu)) n_{ij}(t,\nu) + r_{ij}(t,\nu) 
\end{align}
such that the diagonal entries are given by $n_{jj}(t,\nu)=0$. For the off-diagonal entries we formulate integral representations and use the auxiliary function
\begin{equation}
   \delta_{ij}(t,\nu) = \int_{1}^t (\mu_i(s,\nu)-\mu_j(s,\nu)) \frac{\d s}{s}.
\end{equation}
Then the off-diagonal entries are given by Duhamel integrals
\begin{equation}
   n_{ij}(t,\nu) = - \e^{\delta_{ij}(t,\nu)} \int_t^\infty \e^{-\delta_{ij}(s,\nu)} r_{ij}(s,\nu)  \frac{\d s}{s} 
\end{equation}
for those $i,j$ where $\Re (\mu_i-\mu_j)\ge C_+>0$ and 
\begin{equation}
   n_{ij}(t,\nu) =  \e^{\delta_{ij}(t,\nu)} \int_{1}^t \e^{-\delta_{ij}(s,\nu)} r_{ij}(s,\nu)  \frac{\d s}{s} 
\end{equation}
for those with $\Re (\mu_i-\mu_j)\le  C_-<0$. It follows in particular that $n_{ij}(t,\nu)\to0$ as $t\to\infty$ and 
with $\pm C_\pm\ge\delta>0$ the estimates
\begin{equation}
   |n_{ij}(t,\nu)| \le \int_1^\infty s^{-\delta} |r_{ij}(ts^{\pm1},\nu)| \frac{\d s}{s},  
\end{equation}
the $\pm$-sign depending on the case of the Dichotomy condition. Therefore, the $L^\sigma$-property of $r_{ij}$ implies
by Minkowski inequality
\begin{equation}
  \left(\int_{1}^\infty |n_{ij}(t,\nu)|^\sigma \frac{\d t}{t}\right)^{1/\sigma} \le \int_1^\infty s^{-\delta} \left(\int_{1}^\infty |r_{ij}(ts^{\pm1},\nu)|^\sigma \frac{\d t}{t}\right)^{1/\sigma}\frac{\d s}{s},  
\end{equation}
and thus 
\begin{equation}
 \int_{1}^{\infty} \| N(t,\nu) \|^\sigma \frac{\d t}{t} <\infty.
\end{equation}
uniformly in $\nu\in\Upsilon$. Similarly, by H\"older's inequality and with $\sigma\sigma'=\sigma+\sigma'$.
\begin{align}
 \sup_t  |n_{ij}(t,\nu)| & \le \int_1^\infty s^{-\delta} |r_{ij}(ts^{\pm1},\nu)| \frac{\d s}{s}\notag\\
 & \le \left( \int_1^\infty s^{-\delta \sigma'} \frac{\d s}{s}\right)^{1/\sigma'} \left(\int_{1}^\infty |r_{ij}(ts^{\pm1},\nu)|^\sigma \frac{\d s}{s}\right)^{1/\sigma} 
\end{align}
uniformly in $\nu\in\Upsilon$. Hence, the matrix $N$ belongs to $L^{r}([1,\infty),\d t/t)$ for all $\sigma\le r\le \infty$ uniformly in $\nu$. If $\sigma\ge2$ then equation \eqref{eq:B-def} implies that $B(t,\xi)$ is product of two $L^\sigma$-functions and thus in $L^{\sigma/2}$. If $\sigma\in[1,2)$ then $\sigma'>\sigma$ and thus $B(t,\xi)$ is the product of an $L^\sigma$-function with an $L^{\sigma'}$-function and thus in $L^1$. 
\end{proof}

We distinguish two cases. If $\sigma\in(1,2]$ the transformation reduces the system to Levinson form and 
Theorem \ref{thm:Lev1} applies. If $\sigma$ is larger, than one application of the transform gives a new remainder satisfying \eqref{eq:HWcond} with $\sigma$ replaced by $\sigma/2$. 

In the first case the conclusion of the Theoren~\ref{HartWintner} is the existence of solutions 
\begin{equation}
    V_k(t,\nu) = \big( e_k + o(1) \big) t^{\mu_k} \exp \left(\int_0^t {r_{kk}(s,\nu)}\frac{\d s}{s}\right),\qquad k=1,\ldots,d,
\end{equation}
uniformly in the parameter, provided $D=\diag(\mu_1,\ldots,\mu_d)$ with distinct entries and $R\in L^\sigma([1,\infty),\d t/t)$.

\end{appendix}

\end{document}